\newif\ifarXiv         
\newif\ifjournal        
\crefname{hypothesis}{Hypothesis}{Hypotheses}
\title{{Scalable iterative data-adaptive RKHS regularization}\thanks{Submitted to the editors DATE.
\funding{}}}
\author{Haibo Li\thanks{School of Mathematics and Statistics, The University of Melbourne, Victoria, Australia.
  (\email{haibo.li@unimelb.edu.au}).}
\and Jinchao Feng \thanks{Department of Mathematics, School of Sciences, Great Bay University, Dongguan, China.
  (\email{jcfeng@gbu.edu.cn}).}
\and Fei Lu
\thanks{Department of Mathematics, Johns Hopkins University, Baltimore, USA.
  (\email{feilu@math.jhu.edu}).}
}
\def\vecphi{x}  
\def\obs{b}       
 \def\bw{\mathbf{w}} 
\def\bA{\mathbf{A}}   
\def\bb{\mathbf{b}}
\def\R{\mathbb{R}}
\def\mH{\mathcal{H}}
\newcommand{\norm}[1]{\left\|#1\right\|}
\newcommand{\innerp}[1]{\langle{#1}\rangle}
\newcommand{\mathspan}[1]{ \mathrm{span}\left\{ {#1} \right\} }
\newcommand\argmin{\mathop{\mathrm{argmin}}}
\def\calE{\mathcal{E}}
\def\mathspan{\mathrm{span}}
\def\Gbar{ {\overline{G}} }
\def\LGbar{ {\mathcal{L}_{\overline{G}}}  }
\def\mH{\mathcal{H}}
\def\calS{\mathcal{S}}
\def\calT{\mathcal{T}}
\def\spaceY{{L^2_\mu(\mathcal{T})}} 
\newcommand{\beqa}{\begin{eqnarray}}
\newcommand{\eeqa}{\end{eqnarray}}
\newcommand{\beqas}{\begin{eqnarray*}}
\newcommand{\eeqas}{\end{eqnarray*}}
\newcommand{\beq}{\begin{equation}}
\newcommand{\eeq}{\end{equation}}
\newcommand{\beqs}{\begin{equation*}}
\newcommand{\eeqs}{\end{equation*}}
\numberwithin{equation}{section}
\numberwithin{theorem}{section}
\definecolor{darkmagenta}{rgb}{0.55, 0.0, 0.55}
\colorlet{colorYO}{darkmagenta}
\renewcommand{\algorithmicrequire}{\textbf{Input:}}    
\renewcommand{\algorithmicensure}{\textbf{Output:}}
\let\oldbibliography\thebibliography
\renewcommand{\thebibliography}[1]{%
  \oldbibliography{#1}%
  \setlength{\itemsep}{-2pt}%
}
\newcommand{\mathsout}[1]
{\bgroup\mathchoice
  {\sbox0{$\displaystyle{#1}$}%
    \usebox0\hspace{-\wd0}%
    \rule[0.5\ht0-0.5\dp0-.5pt]{\wd0}{1pt}}%
  {\sbox0{$\textstyle{#1}$}%
    \usebox0\hspace{-\wd0}%
    \rule[0.5\ht0-0.5\dp0-.5pt]{\wd0}{1pt}}%
  {\sbox0{$\scriptstyle{#1}$}%
    \usebox0\hspace{-\wd0}%
    \rule[0.5\ht0-0.5\dp0-.5pt]{\wd0}{1pt}}%
  {\sbox0{$\scriptscriptstyle{#1}$}%
    \usebox0\hspace{-\wd0}%
    \rule[0.5\ht0-0.5\dp0-.5pt]{\wd0}{1pt}}%
\egroup}
\def\vecphi{x}  
\def\obs{b}       
 \def\bw{\mathbf{w}} 
\def\bA{\mathbf{A}}   
\def\bb{\mathbf{b}}
\def\R{\mathbb{R}}
\def\mH{\mathcal{H}}
\newcommand{\norm}[1]{\left\|#1\right\|}
\newcommand{\innerp}[1]{\langle{#1}\rangle}
\newcommand{\mathspan}[1]{ \mathrm{span}\left\{ {#1} \right\} }
\newcommand\argmin{\mathop{\mathrm{argmin}}}
\def\calE{\mathcal{E}}
\def\mathspan{\mathrm{span}}
\def\Gbar{ {\overline{G}} }
\def\LGbar{ {\mathcal{L}_{\overline{G}}}  }
\def\mH{\mathcal{H}}
\def\calS{\mathcal{S}}
\def\calT{\mathcal{T}}
\def\spaceY{{L^2_\mu(\mathcal{T})}} 
\newcommand{\beqa}{\begin{eqnarray}}
\newcommand{\eeqa}{\end{eqnarray}}
\newcommand{\beqas}{\begin{eqnarray*}}
\newcommand{\eeqas}{\end{eqnarray*}}
\newcommand{\beq}{\begin{equation}}
\newcommand{\eeq}{\end{equation}}
\newcommand{\beqs}{\begin{equation*}}
\newcommand{\eeqs}{\end{equation*}}
\newtheorem{theorem}{Theorem}
\newtheorem{definition}[theorem]{Definition}
\newtheorem{lemma}[theorem]{Lemma}
\newtheorem{proposition}[theorem]{Proposition}
\newenvironment{proof}[1][Proof]{\noindent\textbf{#1.} }{\ \rule{0.5em}{0.5em}}
\numberwithin{equation}{section}
\numberwithin{theorem}{section}
\definecolor{darkmagenta}{rgb}{0.55, 0.0, 0.55}
\colorlet{colorYO}{darkmagenta}
\title{Scalable iterative data-adaptive RKHS regularization}
\date{}
\author[1]{Haibo Li}
\author[2]{Jinchao Feng} 
\author[3] {Fei Lu}
\affil[1]{\small School of Mathematics and Statistics, The University of Melbourne, Victoria, Australia. }
\affil[2]{\small  Department of Mathematics, School of Sciences, Great Bay University, Dongguan, China.}
\affil[3]{\small  Department of Mathematics, Johns Hopkins University, Baltimore, USA.}
\begin{document}
\ifjournal    \pagewiselinenumbers    \fi
\maketitle

\ifarXiv \vspace{-12mm} \fi
\begin{abstract}
We present iDARR, a scalable iterative Data-Adaptive RKHS Regularization method, for solving ill-posed linear inverse problems. The method searches for solutions in subspaces where the true solution can be identified, with the data-adaptive RKHS penalizing the spaces of small singular values. At the core of the method is a new generalized Golub-Kahan bidiagonalization procedure that recursively constructs orthonormal bases for a sequence of RKHS-restricted Krylov subspaces. The method is scalable with a complexity of $O(kmn)$ for $m$-by-$n$ matrices with $k$ denoting the iteration numbers. Numerical tests on the Fredholm integral equation and 2D image deblurring show that it outperforms the widely used $L^2$ and $l^2$ norms, producing stable accurate solutions consistently converging when the noise level decays.
\end{abstract}

\ifjournal
 \begin{keywords} iterative regularization; ill-posed inverse problem; reproducing kernel Hilbert space; Golub-Kahan bidiagonalization; deconvolution.
\end{keywords}

\begin{AMS}  47A52, 65F22, 65J20 \end{AMS}
\fi

\ifarXiv
\noindent \textbf{Keywords:} iterative regularization; ill-posed inverse problem; reproducing kernel Hilbert space; Golub-Kahan bidiagonalization; deconvolution. 

\noindent \textbf{AMS subject classifications}(MSC2020):	47A52, 65F22, 65J20
\vspace{-4mm}
\fi

\section{Introduction}
This study considers large-scale ill-posed linear inverse problems with little prior information on the regularization norm. The goal is to reliably solve high-dimensional vectors $x\in \R^n$ from the equation
\begin{equation}\label{eq:Ab}
	A x + \bw = b , \quad A\in \R^{m\times n}, 
\end{equation}
where $A$ and $b$ are data-dependent forward mapping and output, and $\bw$ denotes noise or measurement error.  
The problem is ill-posed in the sense that the least squares solution with minimal Euclidean norm, often solved by $x_{LS}=A^{\dagger}b$ or $x_{LS}= (A^\top A)^\dagger A^\top b$ with $^\dagger$ denoting pseudo-inverse, is sensitive to perturbations in $b$. Such an ill-posedness happens when the singular values of $A$ decay to zero 
faster than the perturbation in $b$ projected in the corresponding singular vectors. 

Regularization is crucial to producing stable solutions for the ill-posed inverse problem. Broadly, it encompasses two integral components: a penalty term that defines the search domain and a hyperparameter that controls the strength of regularization. There are two primary approaches to implementing regularization: direct methods, which rely on matrix decomposition, e.g., the Tikhonov regularization \cite{tihonov1963solution}, the truncated singular value decomposition (SVD) \cite{hansen1998rank,engl1996regularization}; and iterative methods, which use matrix-vector computations to scale for high-dimensional problems, see e.g., \cite{gazzola2019ir,chen2022stochastic,zhang2022stochastic} for recent developments.

In our setting, we encounter two primary challenges: selecting an adaptive regularization norm and devising an iterative method to ensure scalability. The need for an adaptive norm arises from the variability of the forward map $A$ across different applications and the often limited prior information about the regularity of $x$. Many existing regularization norms, such as the Euclidean norms used in Tikhonov methods in  \cite{hansen1998rank,tihonov1963solution} and the total variation norm in \cite{rudin1992nonlinear}, lack this adaptability; for more examples, see the related work section below. Although a data-adaptive regularization norm has been proposed in \cite{LLA22,LAY22} for nonparametric regression, it is implicitly defined and requires a spectral decomposition of the normal operator.

We introduce iDARR, an \textsf{i}terative \textsf{D}ata-\textsf{A}daptive \textsf{R}eproducing kernel Hilbert space \textsf{R}egularization method. This method resolves both challenges by iteratively solving the subspace-projected problem
\begin{equation*}
	x_k =\argmin_{\vecphi\in\mathcal{X}_k}\|x\|_{ C_{rkhs}}, \ \ \mathcal{X}_k = \{\vecphi: \min_{x\in\mathcal{S}_k}\|Ax-\obs\|_{2}\}, 
\end{equation*}
where $\|\cdot\|_{C_{rkhs}}$ is the implicitly defined semi-norm of a \emph{data-adaptive RKHS} (DA-RKHS), and $\mathcal{S}_k$ are subspaces of the DA-RKHS constructed by a \emph{generalized Golub-Kahan bidiagonalization} (gGKB). By stopping the iteration early using the L-curve criterion, it produces a stable accurate solution without using any matrix decomposition.

The DA-RKHS is a space defined by the data and model, embodying the intrinsic nature of the inverse problem. Its closure is the data-dependent space in which the true solution can be recovered, particularly when $A$ is deficient-ranked. Thus, when used for regularization, it confines the solution search in the right space and penalizes the small singular values, leading to stable solutions. We construct this DA-RKHS by reformulating \cref{eq:Ab} as a weighted Fredholm integral equation of the first kind and examining the identifiability of the input signal, as detailed in \Cref{sec:FSOI}.

Our key innovation is the gGKB. It constructs solution subspaces in the DA-RKHS without explicitly computing it. It is scalable with a cost of only $O(kmn)$, where $k$ is the number of iterations. This cost is orders of magnitude much smaller than the cost of direct methods based on spectral decomposition of $A^\top A$, typically $O(n^3+mn^2)$ operations.

The iDARR and gGKB have solid mathematical foundations. We prove that each subspace $\mathcal{S}_k$ is restricted in the DA-RKHS, thereby named the \emph{RKHS-restricted Krylov subspaces}. It is spanned by the orthonormal vectors produced by gGKB. Importantly, if not stopped early, the gGKB terminates when the RKHS-restricted Krylov subspace is fully explored, and the solution in each iteration is unique.

Systematic numerical tests employing the Fredholm integral equations demonstrate that iDARR surpasses traditional iterative methods employing $l^2$ and $L^2$ norms in the state-of-the-art \textsf{IR TOOLS} package \cite{gazzola2019ir}. Notably, iDARR delivers accurate estimators that consistently decay with the noise level. This superior performance is evident irrespective of whether the spectral decay is exponential or polynomial, or whether the true solution resides inside or outside the DA-RKHS. Furthermore, our application to image deblurring underscores both its scalability and accuracy.


\noindent\textbf{Main contributions.}
Our main contribution lies in developing iDARR, a scalable iterative regularization method tailored for large-scale ill-posed inverse problems with little prior knowledge about the solution. The cornerstone of iDARR is the introduction of a new data-adaptive RKHS determined by the underlying model and the data. A key technical innovation is the gGKB, which efficiently constructs solution subspaces of the implicitly defined DA-RKHS.  

\subsection{Related work}
Numerous regularization methods have been developed, and the literature on this topic is too vast to be surveyed here; we refer to \cite{hansen1998rank,engl1996regularization,gazzola2019ir} and references therein for an overview. In the following, we compare iDARR with the most closely related works.  

\paragraph{Regularization norms}
Various regularization norms exist, such as Euclidean norms of Tikhonov in \cite{hansen1998rank,tihonov1963solution}, the total variation norm $\|\vecphi'\|_{L^1}$ of the  Rudin--Osher--Fatemi method in \cite{rudin1992nonlinear}, the $L^1$ norm $ \|\vecphi\|_{L^1}$ of LASSO in \cite{tibshirani1996_RegressionShrinkage}, and the RKHS norm $\|\vecphi\|_R^2$ of an RKHS with a user-specified reproducing kernel $R$ \cite{wahba1977practical,bauer2007regularization,CZ07book}. These norms, however, are often based on presumed properties of the solution and do not consider the specifics of each inverse problem. Our RKHS norm differs by adapting to the model and data: our RKHS has a reproducing kernel determined by the inverse problem, and its closure is the space in which the solution can be identified, making it an apt choice for regularization in the absence of additional solution information.

\paragraph{Iterative regularization (IR) methods} 
IR methods are scalable by accessing the matrix only via matrix-vector multiplications, producing a sequence of estimators until an early stopping, where the iteration number plays the role of the regularization parameter. IR has a rich and extensive history and continues to be a vibrant area of interest in contemporary studies \cite{Paige1982,Arridge2014,gazzola2019ir,Li2023}. Different regularization terms lead to various methods. The LSQR algorithm \cite{Paige1982,Bjorck1988} with early stopping is standard for $\|x\|_{2}^2$-regularization. It solves projected problems in Krylov subspaces before transforming back to the original domain. For $\|Lx\|_{2}^{2}$ with $L\in\mathbb{R}^{p\times n}$, the widely-used methods include joint bidiagonalization method \cite{Kilmer2007Projection,JiaYang2020}, generalized Krylov subspace method \cite{Lampe2012large,Reichel2012tikhonov}, random SVD or generalized SVD method \cite{xiang2013regularization,Xiang2015,Wei2016}, modified truncated SVD method \cite{Bai2021novel,Huang2022tikhonov}, etc. For the general regularization norm in the form $x^TMx$ with a symmetric matrix $M$, the MLSQR in \cite{Arridge2014} treats positive definite $M$ and the preconditioned GKB \cite{Li2023} handles positive semi-definite $M$'s.  Our iDARR studies the case that $M$ is unavailable but $M^{\dagger}$ is ready to be used.

\paragraph{Golub-Kahan bidiagonalization (GKB)} The GKB was first used to solve inverse problems in \cite{Oleary1981}, which generates orthonormal bases for Krylov subspaces in $(\mathbb{R}^{n},\langle\cdot,\cdot\rangle_2)$ and $(\mathbb{R}^{m},\langle\cdot,\cdot\rangle_2)$. This method extends to bounded linear compact operators between Hilbert spaces, with properties and convergence results detailed in \cite{caruso2019convergence}. Our gGKB extends the method to construct RKHS-restricted Krylov subspace in $(\mathbb{R}^{n},\langle\cdot,\cdot\rangle_{C_{rkhs}})$ and $(\mathbb{R}^{m},\langle\cdot,\cdot\rangle_2)$, where $C_{rkhs}$ is positive semidefinite, and in particular, only $C_{rkhs}^\dag$ is available.   

The remainder of this paper is organized as follows: \Cref{sec:FSOI} introduces the adaptive RKHS with a characterization of its norm. \Cref{sec:alg-idartr} presents in detail the iDARR. \Cref{sec:gGKB} proves the desired properties of gGKB. In \Cref{sec:num}, we systematically examine the algorithm and demonstrate the robust convergence of the estimator when the noise becomes small. Finally, \Cref{sec:conlusion} concludes with a discussion on future developments.

\section{A Data Adaptive RKHS for Regularization} \label{sec:FSOI} 
This section introduces a data-adaptive RKHS (DA-RKHS) that adapts to the model and data in the inverse problem. The closure of this DA-RKHS is the function (or vector) space in which the true solution can be recovered, or equivalently, the inverse problem is well-defined in the sense that the loss function has a unique minimizer. Hence, when its norm is used for regularization, this DA-RKHS ensures that the minimization searches in the space where we can identify the solution.

To describe the DA-RKHS, we first present a unified notation that applies to both discrete and continuous time models using a weighted Fredholm integral equation of the first kind. Based on this notation, we write the normal operator as an integral operator emerging in a variational formulation of the inverse problem. The integral kernel is the reproducing kernel for the DA-RKHS. In other words, the normal operator defines the DA-RKHS. At last, we briefly review a DA-RKHS Tikhonov regularization algorithm, the DARTR algorithm.

\subsection{Unified notation for discrete and continuous models}\label{sec:unif-notation}

The linear equation \cref{eq:Ab} can arise from discrete or continuous inverse problems. In either case, we can present the inverse problem using the prototype of the Fredholm integral equation of the first kind. We consider only the 1D case for simplicity, and the extension to high-dimensional cases is straightforward.  Specifically, let $\calS,\calT\subset \R$ be two compact sets. We aim to recover the function $\phi:\calS\to \R$ in the Fredholm equation
\begin{align}\label{eq:FIE}
 y(t) = \int_\calS K(t,s)\phi(s) \nu(ds)+ \sigma \dot W(t) =: L_K\phi(t) + \sigma \dot W(t), \quad \forall t\in \calT
 \end{align}
from discrete noisy data 
$$
b = (y(t_1),\ldots, y(t_m))^\top\in \R^m,
$$ where we assume the observation index $\calT= \{t_j\}_{j=1}^m$ to be $0=t_0 < t_1 <\cdots <t_m$ for simplicity. Here the measurement noise  $\sigma \dot W(t)$ is the white noise scaled by $\sigma$; that is, the noise at $t_j$ has a Gaussian distribution $\mathcal{N}(0, \sigma^2 (t_{j+1}-t_j))$ for each $j$. Such noise is integrable when the observation mesh refines, i.e., $\max_j(t_{j+1}-t_j)$ vanishes. 

Here the finite measure $\nu$ can be either the Lebesgue measure with $\calS$ being an interval or an atom measure with $\calS$ having finitely many elements. Correspondingly, the Fredholm integral equation \cref{eq:FIE} is either a continuous or a discrete model. 

In either case, the goal is to solve for the function $\phi:\calS\to \R$ in \cref{eq:FIE}. When seeking a solution in the form of $\phi = \sum_{i=1}^n x_i \phi_i$, where $\{\phi_i\}_{i=1}^n$ is a pre-selected set of basis functions, we obtain the linear equation \cref{eq:Ab} with $x= (x_1,\ldots,x_n)^\top \in \R^n$ and the matrix $A$ with entries 
\begin{equation}\label{eq:A_basis}
  A(j,i) =\int_\calS K(t_j,s)\phi_i(s)\nu(ds) = L_K\phi_i(t_j), \quad 1\leq j\leq m, 1\leq i\leq n. 	
\end{equation}
In particular, when $\{\phi_i\}$ are piece-wise constants, we obtain $A$ as follows. 
\begin{itemize}
\item \emph{Discrete model.}  Let $\nu$ be an atom measure on $\calS= \{s_i\}_{i=1}^n$, a set with $n$ elements. Suppose that the basis functions are $\phi_i(s) = \mathbf{1}_{\{s_i\}}(s)$. Then, $\phi= x$ and the matrix $A$ has entries $A(j,i) = K(t_j,s_i)\nu(s_i)$. 
 \item \emph{Continuous model.} Let $\nu$ be the Lebesgue measure on $\calS = [0,1]$, and $\phi_i(s)=\mathbf{1}_{[s_{i-1},s_{i}]}(s) $ be piecewise constant functions on a partition of $\calS $ with $0= s_0<s_1<\ldots < s_n= 1$. Then, $\phi= \sum_{i=1}^n x_i \phi_i$ and the matrix $A$ has entries $A(j,i) = K(t_j,s_i) (s_i-s_{i-1})$.   
\end{itemize}

The default function spaces for $\phi$ and $y$ above are $L^2_\nu(\calS)$ and $L^2_\mu(\calT)$. 
The loss function $\calE(x) = \|Ax-b\|_{2}^2$ over $L^2_\nu(\calS)$ becomes 
\begin{align}\label{least_sq_org}
\mathcal{E}(\phi):=\norm{L_K\phi-y}^2_{\spaceY}& = \innerp{L_K\phi,L_K\phi}_{\spaceY}-2\innerp{L_K\phi,y}_{\spaceY}+\norm{y}^2_{\spaceY}, 
\end{align}

Eq.\cref{eq:FIE} is a prototype of ill-posed inverse problems, dating back from Hadamard \cite{hadamard1923lectures}, and it remains a testbed for new regularization methods \cite{wahba1973convergence,nashed1974generalized,hansen1998rank,Li2005modified}.  

The $L^2_\nu(\calS)$ norm is often a default choice for regularization. However, it has a major drawback: it does not take into account the operator $L_K$, particularly when $L_K$ has zero eigenvalues, and it leads to unstable solutions that may blow up in the small noise limit \cite{LangLu23sna}. To avoid such instability, particularly for iterative methods, we introduce a weighted function space and an RKHS that are adaptive to both the data and the model in the next sections.


\subsection{The function space of identifiability} 
We first introduce a weighted function space $L^2_\rho(\calS)$, where the measure $\rho$ is defined as  
\beq
\label{eq:exp_measure}
\frac{d\rho}{d\nu}(s):=\frac{1}{Z} \int_\calT | K(t,s)| \mu(dt), \, \forall s\in \calS, 
\eeq
where $Z= \int_\calS \int_\calT | K(t,s) |\mu(dt)\nu(ds)$ is a normalizing constant. 
 This measure quantifies the exploration of data to the unknown function through the integral kernel $K$ at the output set $\calT$, i.e., $\{K(t_j,\cdot)\}_{t_j\in \calT}$, hence it is referred to as an \emph{exploration measure}. In particular, when \eqref{eq:Ab} is a discrete model, the exploration measure is the normalized column sum of the absolute values of the matrix $A$.

The major advantage of the space $L^2_\rho(\calS)$ over the original space $L^2_\nu(\calS)$ is that it is adaptive to the specific setting of the inverse problem. In particular, this weighted space takes into account the structure of the integral kernel and the data points in $\calT$.  Thus, while the following introduction of RKHS can be carried out in both $L^2_\rho(\calS)$ and $L^2_\nu(\calS)$, we will focus only on $L^2_\rho(\calS)$.

Next, we consider the variational inverse problem over $L^2_\rho$, and the goal is to find a minimizer of the loss function \cref{least_sq_org} in $L^2_\rho$. Since the loss function is quadratic, its Hessian is a symmetric positive linear operator, and it has a unique minimizer in the linear subspace where the Hessian is strictly positive. We assign a name to this subspace in the next definition. 
\begin{definition}[Function space of identifiability] 
In a variational inverse problem of minimizing a quadratic loss function $\calE$ in $L^2_\rho$, we call $\LGbar= \frac{1}{2}\nabla^2\calE$ the \emph{normal operator}, where $\nabla^2\calE$ is the Hessian of $\calE$, and we call 
 $
 H= \mathrm{Null}(\LGbar)^\perp 
 $
  the \emph{function space of identifiability (FSOI)}.
 \end{definition}
  
The next lemma specifies the FSOI for the loss function in \cref{least_sq_org} (see \cite{LLA22} for its proof).  
\begin{lemma}\label{thm:FSOI} 
Assume $K\in C(\calT\times \calS)$. For $\rho$ in \cref{eq:exp_measure}, define $\Gbar:\calS\times \calS \to \R$ as  
 \begin{equation}  \label{eq:G}
 \begin{aligned}
 \Gbar(s,s')&:=
 \frac{G(s,s')}{\frac{d\rho}{d\nu}(s)\frac{d\rho}{d\nu}(s')}, \quad   G(s,s')&:= \int_\calT K(t,s)K(t,s') \mu(dt).  	 	
 \end{aligned}
 \end{equation}
\begin{itemize}
\item[{\rm(a)}] The normal operator for $\calE$ in \cref{least_sq_org} over $L^2_\rho$ is $\LGbar: L^2_\rho \to L^2_\rho$ defined by 
\beq
\label{eq:LG}
\LGbar\phi(s):=\int_{\calS} \phi(s')\Gbar(s,s')\rho(ds'), 
\eeq
and the loss function can be written as  
\begin{align}\label{eq:lossFn-sq}
\mathcal{E}(\phi)
& = \innerp{\LGbar\phi,\phi}_{L^2_\rho} - 2\innerp{\phi^D,\phi}_{L^2_\rho}+ const.,
\end{align}
where $\phi^D$ comes from Riesz representation s.t.~$\innerp{\phi^D,\phi}_{L^2_\rho}  =\innerp{L_K\phi,y}_{\spaceY} $ for any $\phi\in L^2_\rho$.
\item[\rm{(b)}] 
$\LGbar$ is compact, self-adjoint, and positive. Hence, its eigenvalues $\{\lambda_i\}_{i\geq 1}$ converge to zero and its orthonormal eigenfunctions $\{ \psi_i\}_{i}$ form a complete basis of $L^2_\rho(\calS)$.  
\item[\rm{(c)}] 
The FSOI is $H:=\overline{ \mathspan\{ \psi_i\}_{i:\lambda_i>0} }\subset L^2_\rho(\calS)$, and the unique minimizer of $\calE$ in H is 
$\widehat{\phi} = \LGbar^{-1}\phi^D$, 
where $\LGbar^{-1}$ is the inversion of $\LGbar:H\to L^2_\rho$.  
\end{itemize}
\end{lemma}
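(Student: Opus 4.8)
The plan is to establish (a) by a direct expansion of the loss, obtain (b) from the structure of the resulting kernel together with the spectral theorem, and finally read off (c) from injectivity of the normal operator on the orthogonal complement of its null space. For (a), I would begin with the expanded loss \cref{least_sq_org} and focus on the quadratic term $\innerp{L_K\phi,L_K\phi}_\spaceY$. Writing out $L_K\phi(t)=\int_\calS K(t,s)\phi(s)\nu(ds)$ and squaring, I would apply Fubini's theorem---legitimate because $K\in C(\calT\times\calS)$ is bounded on the compact domain and $\mu,\nu$ are finite, so the triple integral converges absolutely for $\phi\in L^2\subset L^1$---to pull the $t$-integral inside, producing the kernel $G(s,s')=\int_\calT K(t,s)K(t,s')\mu(dt)$ and the identity $\innerp{L_K\phi,L_K\phi}_\spaceY=\int_\calS\int_\calS G(s,s')\phi(s)\phi(s')\nu(ds)\nu(ds')$. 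Substituting $\nu(ds)=(\tfrac{d\rho}{d\nu})^{-1}\rho(ds)$ and absorbing the two density factors into $G$ produces the weighted kernel $\Gbar$ of \cref{eq:G}, so the quadratic term equals $\innerp{\LGbar\phi,\phi}_{L^2_\rho}$. The cross term $\innerp{L_K\phi,y}_\spaceY$ is a bounded linear functional of $\phi\in L^2_\rho$, so the Riesz representation theorem supplies $\phi^D$ with $\innerp{\phi^D,\phi}_{L^2_\rho}=\innerp{L_K\phi,y}_\spaceY$, giving \cref{eq:lossFn-sq}; differentiating this quadratic form twice yields $\tfrac12\nabla^2\calE=\LGbar$, identifying the normal operator.

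For (b), self-adjointness is immediate since $G$, hence $\Gbar$, is symmetric in its two arguments, and positivity follows from (a) because $\innerp{\LGbar\phi,\phi}_{L^2_\rho}=\norm{L_K\phi}_\spaceY^2\ge0$ for every $\phi$. Compactness is the step I expect to demand real care: I would show $\LGbar$ is Hilbert--Schmidt on $L^2_\rho$, i.e. $\int_\calS\int_\calS|\Gbar(s,s')|^2\rho(ds)\rho(ds')<\infty$. The obstacle is that the denominators $\tfrac{d\rho}{d\nu}$ appearing in $\Gbar$ can vanish on the boundary of $\supp\rho$, so the integrand is not manifestly bounded. The crux is the Cauchy--Schwarz-type bound $|G(s,s')|\le\|K(\cdot,s')\|_\infty\int_\calT|K(t,s)|\mu(dt)=Z\,\|K(\cdot,s')\|_\infty\tfrac{d\rho}{d\nu}(s)$, which cancels exactly one power of $\tfrac{d\rho}{d\nu}(s)$; using it symmetrically and then converting $\rho(ds)=\tfrac{d\rho}{d\nu}(s)\nu(ds)$ collapses the Hilbert--Schmidt integral to $Z^2\big(\int_\calS\|K(\cdot,s)\|_\infty\nu(ds)\big)^2$, which is finite because $K$ is continuous on a compact set and $\nu$ is finite. (Since $\rho\ll\nu$, the set where $\tfrac{d\rho}{d\nu}=0$ carries no $\rho$-mass, so $\Gbar$ is defined $\rho\otimes\rho$-a.e.) Compactness, self-adjointness, and positivity then invoke the spectral theorem for compact self-adjoint operators to produce eigenpairs $(\lambda_i,\psi_i)$ with $\lambda_i\ge0$, $\lambda_i\to0$, and $\{\psi_i\}$ a complete orthonormal basis of $L^2_\rho$.

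For (c), note that $H=\mathrm{Null}(\LGbar)^\perp=\overline{\mathspan\{\psi_i:\lambda_i>0\}}$ is precisely the subspace on which $\LGbar$ is injective and strictly positive, so the quadratic \cref{eq:lossFn-sq} restricted to $H$ is strictly convex and has at most one minimizer. Setting $\nabla\calE(\phi)=2(\LGbar\phi-\phi^D)$ to zero and projecting onto $H$ gives the normal equation $\LGbar\widehat\phi=P_H\phi^D$, whose solution in $H$ is $\widehat\phi=\LGbar^{-1}\phi^D$, with $\LGbar^{-1}$ the inverse of the injective restriction $\LGbar:H\to L^2_\rho$ and existence guaranteed when $\phi^D\in\mathrm{Range}(\LGbar)$. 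Any component of a candidate solution lying in $\mathrm{Null}(\LGbar)$ leaves the loss unchanged, which is exactly why identifiability is confined to $H$ and why uniqueness holds only after restricting there. The single genuine difficulty in the whole argument is the compactness estimate of (b)---the vanishing-denominator bound---so that is where I would concentrate the rigor; the remaining steps are inner-product bookkeeping and a direct appeal to spectral theory.
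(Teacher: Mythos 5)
Your proposal is correct and follows essentially the same route as the paper's own proof: part (a) by the Fubini expansion identifying the quadratic term with $\innerp{\LGbar\phi,\phi}_{L^2_\rho}$, part (b) by showing $\Gbar$ is a symmetric, positive, square-integrable kernel --- with the same key cancellation of one power of $\tfrac{d\rho}{d\nu}$ that the paper obtains via the bound $|G(s,s')|\le \|K\|_\infty Z\min\{\tfrac{d\rho}{d\nu}(s),\tfrac{d\rho}{d\nu}(s')\}$ --- and part (c) read off from the definition of the FSOI and the gradient of the quadratic loss. There is no gap; your version is merely slightly more explicit about the Hilbert--Schmidt estimate and the a.e.\ handling of the vanishing density.
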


\Cref{thm:FSOI} reveals the cause of the ill-posedness, and provides insights on regularization: 
\begin{itemize}
\item The variational inverse problem is well-defined only in the FSOI $H$, which can be a proper subset of $L^2_\rho$. Its ill-posedness in $H$ depends on the smallest eigenvalue of the operator $\LGbar$ and the error in $\phi^D$. 
\item When the data is noiseless, the loss function can only identify the $H$-projection of the true input function. When data is noisy, its minimizer $\LGbar^{-1}\phi^D$ is ill-defined in $L^2_\rho$ when $\phi^D\notin \LGbar(H)$.  
\end{itemize}
As a result, when regularizing the ill-posed problem, an important task is to ensure the search takes place in the FSOI and to remove the noise-contaminated components making $\phi^D\notin \LGbar(H)$. 

\subsection{A Data-adaptive RKHS}\label{sec:RKHS}
Our \emph{data-adaptive RKHS} is the RKHS with $\Gbar$ in \cref{eq:G} as a reproducing kernel. Hence, it is adaptive to the integral kernel $K$ and the data in the model. When its norm is used for regularization, it ensures that the search takes place in the FSOI because its $L^2_\rho$ closure is the FSOI; also, it penalizes the components in $\phi^D$ corresponding to the small singular values.  

The next lemma characterizes this RKHS, and we refer to \cite{LLA22} for its proof. 
\begin{lemma}[Characterization of the adaptive RKHS]\label{lemma:rkhs} 
Assume $K\in C(\calT\times\calS)$. The RKHS $H_G$  with $\Gbar$ as its reproducing kernel satisfies the following properties. 
\begin{enumerate}
\item[{\rm(a)}]  $H_G:=\LGbar^{\frac{1}{2}}(L^2_\rho(\calS))$ has inner product 
$\innerp{\phi, \phi}_{H_G}=\innerp{\LGbar^{-\frac{1}{2}}\phi,\LGbar^{-\frac{1}{2}} \phi }_{L^2_\rho(\calS)}. 
$ 
\item [{\rm(b)}] 
  $\{\sqrt{\lambda_i}  \psi_i\}_{\lambda_i>0}$ is an orthonormal basis of $H_G$, where $\{ (\lambda_i,\psi_i)\}_{i}$ are eigen-pairs of $\LGbar$. 
\item [{\rm(c)}] 
For any $\phi=\sum_{i=1}^\infty c_i  y _i\in H_G$, we have 
\begin{equation*}
\innerp{L_K\phi,L_K\phi}_\spaceY=\sum_{i=1}^\infty \lambda_i c_i^2,\quad \| \phi\|^2_{L^2_\rho}=\sum_{i=1}^\infty c_i^2, \quad \| \phi\|^2_{H_G}=\sum_{i=1}^\infty \lambda_i^{-1} c_i^2. 
\end{equation*}  
\item [{\rm(d)}]  $H=\overline{ H_G}$ with inclosure in $L^2_\rho(\calS)$, where $H=\overline{ \mathspan\{ y _i\}_{i:\lambda_i>0} }$ is the FSOI. 
\end{enumerate}
\end{lemma}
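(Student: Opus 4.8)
The plan is to derive all four statements from the spectral decomposition of the normal operator $\LGbar$ established in \Cref{thm:FSOI}, combined with a Mercer-type expansion of the kernel $\Gbar$. Writing the eigen-decomposition $\LGbar\phi = \sum_{i} \lambda_i \innerp{\phi,\psi_i}_{L^2_\rho}\psi_i$, where $\{\psi_i\}$ is the complete orthonormal eigenbasis of $L^2_\rho(\calS)$ and $\lambda_i\ge 0$, the square root is defined by functional calculus as $\LGbar^{1/2}\phi = \sum_i \sqrt{\lambda_i}\,\innerp{\phi,\psi_i}_{L^2_\rho}\psi_i$, and $\LGbar^{-1/2}$ is its inverse restricted to $\mathrm{Null}(\LGbar)^\perp = \overline{\mathspan\{\psi_i\}_{\lambda_i>0}}$. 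Once Mercer's theorem supplies the pointwise expansion $\Gbar(s,s') = \sum_i \lambda_i \psi_i(s)\psi_i(s')$, the entire argument reduces to bookkeeping with the coefficients $c_i = \innerp{\phi,\psi_i}_{L^2_\rho}$.

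For (a) and (b), I would first note that $\LGbar^{1/2}$ is an isometry from $\mathrm{Null}(\LGbar)^\perp$ onto $H_G$ once $H_G$ carries the stated inner product, so $H_G$ inherits completeness and is a genuine Hilbert space with $\innerp{\phi,\varphi}_{H_G} = \sum_{\lambda_i>0} \lambda_i^{-1} c_i d_i$ (here $d_i = \innerp{\varphi,\psi_i}_{L^2_\rho}$). Orthonormality and completeness of $\{\sqrt{\lambda_i}\psi_i\}_{\lambda_i>0}$ in $H_G$, i.e.\ claim (b), then follow by direct substitution into this coefficient formula. To confirm that $(H_G,\innerp{\cdot,\cdot}_{H_G})$ is \emph{exactly} the RKHS with kernel $\Gbar$, I would verify the two defining properties: using the Mercer expansion, $\Gbar(\cdot,s) = \sum_i \lambda_i \psi_i(s)\psi_i$ has squared $H_G$-norm $\sum_{\lambda_i>0}\lambda_i^{-1}(\lambda_i\psi_i(s))^2 = \sum_i \lambda_i\psi_i(s)^2 = \Gbar(s,s) < \infty$, so $\Gbar(\cdot,s)\in H_G$; and for $\phi = \sum_i c_i\psi_i \in H_G$ the reproducing identity $\innerp{\phi,\Gbar(\cdot,s)}_{H_G} = \sum_i \lambda_i^{-1} c_i(\lambda_i\psi_i(s)) = \sum_i c_i\psi_i(s) = \phi(s)$ holds. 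Uniqueness of the reproducing kernel then identifies $H_G$.

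Claim (c) is immediate bookkeeping with the same coefficients: $\norm{\phi}^2_{L^2_\rho} = \sum_i c_i^2$ is Parseval, $\norm{\phi}^2_{H_G} = \sum_i \lambda_i^{-1} c_i^2$ is the inner-product formula above, and $\innerp{L_K\phi,L_K\phi}_{\spaceY} = \sum_i \lambda_i c_i^2$ follows from the identity $\innerp{L_K\phi,L_K\phi}_{\spaceY} = \innerp{\LGbar\phi,\phi}_{L^2_\rho}$, which I would read off by matching the quadratic terms in \cref{least_sq_org} and \cref{eq:lossFn-sq}. For claim (d), note that $H_G \supseteq \mathspan\{\psi_i\}_{\lambda_i>0}$ (every finite combination lies in $H_G$) while each $\phi\in H_G$ is an $L^2_\rho$-convergent series in $\{\psi_i\}_{\lambda_i>0}$ (since $\sum c_i^2 \le \lambda_1\sum \lambda_i^{-1}c_i^2 < \infty$), so $H_G \subseteq \overline{\mathspan\{\psi_i\}_{\lambda_i>0}}$; taking $L^2_\rho$-closures and invoking \Cref{thm:FSOI}(c) yields $\overline{H_G} = H$.

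The main obstacle is the analytic justification of the Mercer expansion and the attendant pointwise evaluation, which is what makes the formal coefficient manipulations legitimate. Two points need care. First, the kernel $\Gbar$ in \cref{eq:G} is obtained by dividing the continuous kernel $G$ by $\frac{d\rho}{d\nu}(s)\frac{d\rho}{d\nu}(s')$, which may vanish or blow up off $\supp\rho$; one must restrict to $\supp\rho$ and check that $\Gbar$ is continuous there (or at least that $\sup_s\Gbar(s,s) < \infty$), so that Mercer's theorem applies and the eigenfunctions $\psi_i$ admit continuous representatives permitting genuine pointwise evaluation. Second, one must control the convergence of $\sum_i \lambda_i\psi_i(s)\psi_i(s')$; uniform convergence on $\supp\rho\times\supp\rho$ is the cleanest route and ensures $\Gbar(\cdot,s)\in L^2_\rho$ with the claimed coefficients. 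With these in place, the consistent handling of the null space (working in $\mathrm{Null}(\LGbar)^\perp$ and summing only over $\lambda_i>0$) is routine, and all four claims follow.
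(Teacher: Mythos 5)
Your proposal is correct, and it follows the standard spectral/Mercer characterization of the RKHS generated by a compact positive integral operator; the paper itself does not prove \Cref{lemma:rkhs} but defers to \cite{LLA22}, whose argument is of exactly this type (spectral decomposition of $\LGbar$, functional calculus for $\LGbar^{\pm 1/2}$, and coefficient bookkeeping, cf.\ also \cite{CZ07book}). Your explicit flagging of the two analytic caveats --- boundedness/continuity of $\Gbar$ on $\supp\rho$ (which follows from $|\Gbar(s,s')|\leq \|K\|_\infty Z$ as in the proof of \Cref{thm:FSOI}) and the restriction to $\mathrm{Null}(\LGbar)^\perp$ when inverting --- is exactly the care the cited proof takes.
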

%

The next theorem shows the computation of the RKHS norm for the problem \cref{eq:Ab} when it is written in the form \cref{eq:FIE}-\cref{eq:A_basis}. A key component is solving the eigenvalues of $\LGbar$ through a generalized eigenvalue problem. 

\begin{theorem}[Computation of RKHS norm]\label{thm:RKHS_norm}
Suppose that \cref{eq:Ab} is equivalent to \cref{eq:FIE} under basis functions $\{\phi_i\}_{i=1}^n$ with $n\leq \infty$ and \cref{eq:A_basis}.     
Let $B$ with entries $B(i,j) = \innerp{\phi_i,\phi_j}_{L^2_\rho}$ be the non-singular basis matrix, where $\rho$ is the measure defined in \cref{eq:exp_measure}. 
Then, the operator $\LGbar$ in \cref{eq:LG} has eigenvalues $(\lambda_1,\ldots,\lambda_n)$ solved by the generalize eigenvalue problem: 
\begin{equation}\label{eq:AbB1}
A^\top A V=  B V\Lambda, \quad s.t., \ \ V^\top B V = I_n, \quad \Lambda= \mathrm{diag}(\lambda_1,\ldots,\lambda_n), 
\end{equation}
and the eigenfunctions are  $\{ \psi_k = \sum_{j=1}^n  V_{jk}\phi_j\}_k$. The RKHS norm of $\phi=\sum_{i=1}^{n}x_i\phi_i$ satisfies 
\begin{equation}\label{eq:mat_rkhs}
\begin{aligned}
	\| \phi\|_{H_G}^2& =\|x\|_{C_{rkhs}}^2= x^\top  C_{rkhs} x, \\
	 C_{rkhs} & =  (V\Lambda V^\top)^{\dagger} = B (A^\top A)^\dagger B, \quad  C_{rkhs}^{\dagger}= B^{-1}(A^\top A) B^{-1}. 
\end{aligned}
\end{equation}
In particular, if $B=I_n$, we have $C_{rkhs} = (A^\top A)^\dagger$. 
\end{theorem}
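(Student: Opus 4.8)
The plan is to reduce the statement to two matrix identities: one coming from a Galerkin (weak) representation of the normal operator $\LGbar$ on the finite-dimensional subspace $V_n=\mathspan\{\phi_i\}_{i=1}^n$, and one coming from a change of basis from $\{\phi_i\}$ to the eigenfunctions $\{\psi_k\}$ inside the RKHS norm formula of \Cref{lemma:rkhs}(c). Throughout I take $\mu$ to be the (normalized) counting measure on $\calT$ so that $\norm{L_K\phi-y}^2_{\spaceY}=\norm{Ax-b}_2^2$, which is the normalization making \cref{eq:Ab} equivalent to \cref{eq:FIE}.

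First I would record the two bilinear forms on $V_n$. Writing $\phi=\sum_i x_i\phi_i$ and $\psi=\sum_j z_j\phi_j$, the definition of $B$ gives $\innerp{\phi,\psi}_{L^2_\rho}=x^\top B z$, while the defining property of the normal operator (half the Hessian of $\calE$, cf. \cref{eq:lossFn-sq}) together with $L_K\phi(t_j)=(Ax)_j$ yields $\innerp{\LGbar\phi,\psi}_{L^2_\rho}=\innerp{L_K\phi,L_K\psi}_{\spaceY}=x^\top A^\top A z$. Establishing this second identity — matching the abstract operator $\LGbar$ to the Gram matrix $A^\top A$ by polarizing \cref{eq:lossFn-sq} — is the first thing to get right, since everything downstream rests on it.

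Next I would characterize the spectrum. The compression of $\LGbar$ to $V_n$ is self-adjoint for $\innerp{\cdot,\cdot}_{L^2_\rho}$, so its eigenpairs $(\lambda_k,\psi_k)$ with $\psi_k=\sum_j V_{jk}\phi_j$ are exactly those for which $\innerp{\LGbar\psi_k,\psi}_{L^2_\rho}=\lambda_k\innerp{\psi_k,\psi}_{L^2_\rho}$ holds for all $\psi\in V_n$. Substituting the two bilinear forms and letting $z$ range over $\R^n$ turns this into $A^\top A V_{\cdot k}=\lambda_k B V_{\cdot k}$, i.e. the generalized eigenproblem \cref{eq:AbB1}, while $L^2_\rho$-orthonormality of $\{\psi_k\}$ reads $V^\top BV=I_n$. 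From $V^\top BV=I_n$ (with $B$ nonsingular, so $V$ is invertible) I obtain the workhorse relations $V^{-1}=V^\top B$, $VV^\top=B^{-1}$, and, left-multiplying the eigenproblem by $V^\top$, $V^\top(A^\top A)V=\Lambda$, equivalently $A^\top A=BV\Lambda V^\top B$.

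Then I would compute the norm and assemble the identities. Expanding $\phi=\sum_i x_i\phi_i=\sum_k c_k\psi_k$ gives $x=Vc$, hence $c=V^{-1}x=V^\top Bx$, and \Cref{lemma:rkhs}(c) yields $\norm{\phi}^2_{H_G}=\sum_{k:\lambda_k>0}\lambda_k^{-1}c_k^2=c^\top\Lambda^\dagger c=x^\top BV\Lambda^\dagger V^\top B x$, so $C_{rkhs}=BV\Lambda^\dagger V^\top B$. The remaining relations then read off as $C_{rkhs}^\dagger=V\Lambda V^\top=B^{-1}(A^\top A)B^{-1}$ and $C_{rkhs}=(V\Lambda V^\top)^\dagger=B(A^\top A)^\dagger B$, with the case $B=I_n$ making $V$ orthogonal and collapsing these to $C_{rkhs}=(A^\top A)^\dagger$. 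The main obstacle is the bookkeeping of the pseudo-inverses when $A$ is rank-deficient: because $V$ is only $B$-orthogonal and not orthogonal, the reverse-order and similarity laws for the Moore--Penrose inverse do not hold verbatim, and $B$ need not commute with the spectral projector $\Lambda\Lambda^\dagger$. I would therefore verify the rank-deficient identities either by checking the four Penrose conditions directly using $V^\top BV=I_n$ and $\Lambda\Lambda^\dagger\Lambda=\Lambda$, or — more cleanly — by interpreting every pseudo-inverse as the inverse restricted to the FSOI $H=\overline{\mathspan\{\psi_i\}_{\lambda_i>0}}$, on which $\Lambda$ is invertible and all the displayed pseudo-inverses become genuine inverses, exactly as $\LGbar^{-1}$ is defined on $H$ in \Cref{thm:FSOI}(c).
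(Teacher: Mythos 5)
Your proposal follows essentially the same route as the paper's proof: both derive \cref{eq:AbB1} by evaluating the bilinear form $\innerp{\phi_j,\LGbar\psi_k}_{L^2_\rho}$ in two ways (equivalently, by Galerkin projection of $\LGbar$ onto $\mathrm{span}\{\phi_i\}$), read off $V^\top BV=I_n$ from the $L^2_\rho$-orthonormality of the eigenfunctions, and obtain $C_{rkhs}$ by changing basis in the norm formula of \Cref{lemma:rkhs}(c). Your closing caution about the Moore--Penrose identities in the rank-deficient case is well placed: the paper's proof simply asserts $x^\top V^{-1}\Lambda^\dagger V^{-\top}x = x^\top(V\Lambda V^\top)^\dagger x$ and $(V\Lambda V^\top)^\dagger = B(A^\top A)^\dagger B$ without checking the Penrose conditions (which fail for a non-orthogonal $V$ unless the pseudo-inverses are read as inverses restricted to the FSOI), so the verification you propose is a point where your write-up is more careful than the original.
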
 

\begin{proof}
Denote $\boldsymbol{\Phi} =(\phi_1,\ldots,\phi_n)^\top$ and $\boldsymbol{\Psi} =(\psi_1,\ldots,\psi_n)^\top$. 
We first prove that the eigenvalues of $\LGbar$ are solved by \cref{eq:AbB1}. We suppose $\{(\lambda_i,\psi_i)\}_{i=1}^n$ are the eigenvalues and eigen-functions of $\LGbar$ over $L^2_\rho$ with $\{\psi_i\}$ being an orthonormal basis of $L^2_\rho(\calS)$. Since $\mH = \mathrm{span}\{\phi_i\}_{i=1}^n \supseteq \LGbar(L^2_\rho) $, there exists $V\in \R^{n\times n}$ such that $\boldsymbol{\Psi}= V^\top \boldsymbol{\Phi}$, i.e., $\psi_k = \sum_{j=1}^n  V_{jk}\phi_j\}_k$ for each $1\leq k\leq n$. Then, the task is to verify that $V$ and $\Lambda = \mathrm{Diag}(\lambda_1,\ldots,\lambda_n)$ satisfy $A^\top A V=  B V\Lambda$ and $V^\top B V = I_n$. 
 
The orthonormality of $\{\psi_i\}$ implies that 
 $$I_n = \big(\innerp{\psi_k,\psi_l}_{L^2_\rho}\big)_{1\leq k,l\leq n} =   \big(\innerp{\sum_{i=1}^nV_{ik}\phi_i,\sum_{j=1}^nV_{jl} \phi_j}_{L^2_\rho}\big)_{1\leq k,l\leq n} =  V^\top B V. $$ 
 Note that $[A^\top A](j,i) = \innerp{L_K\phi_i, L_K\phi_j}_{L^2_\mu(\calT)} = \innerp{\phi_j,\LGbar \phi_i}_{L^2_\rho}
 $ for $1\leq i,j\leq n$. Then, 
 \[ \innerp{\phi_j,\LGbar\psi_k}_{L^2_\rho} = \sum_{i=1}^n [A^\top A](j,i) V_{i k}.
 \]  
 Meanwhile, the eigen-equation $\LGbar \psi_k= \lambda_k\psi_k$ implies that for each $\phi_j$, 
 $$\innerp{\phi_j,\LGbar\psi_k}_{L^2_\rho} =\lambda_k\innerp{\phi_j,\psi_k}_{L^2_\rho} = \lambda_k \innerp{\phi_j ,\sum_{i=1}^n V_{ik}\phi_i}_{L^2_\rho} = \lambda_k \sum_{i=1}^nB_{ji}V_{ik},  
 $$
 i.e., $\big(\innerp{\phi_j,\LGbar\psi_k}_{L^2_\rho}\big) = BV\Lambda$. 
Hence, these two equations imply that $A^\top A V= BV\Lambda$.   

Next, to compute the norm of $\phi=\sum_{i=1}^{n}x_i\phi_i \in H_G$, we write it as $\phi = x^\top \boldsymbol{\Phi}= x^\top V^{-1}  \boldsymbol{\Psi}$. Then, its norm is 
\[
\|\phi\|_{H_G}^2 = \sum_{k=1}^n\lambda_k^{-1} \big(x^\top V^{-1}\big)_k^2 = x^\top V^{-1} \Lambda^\dagger V^{-\top} x = x^\top (V\Lambda V^\top )^\dagger x .  
\]
Thus, $C_{rkhs} = (V\Lambda V^\top )^\dagger = B (A^\top A)^\dagger B$ and $C_{rkhs}^{\dagger}= V\Lambda V^\top = B^{-1}(A^\top A) B^{-1}$. 
\end{proof}

In particular, when \cref{eq:Ab} is either a discrete model or a discretization of \cref{eq:FIE} based on Riemann sum approximation of the integral, the exploration measure $\rho$ is the normalized column sum of the absolute values of the matrix $A$, and $B=\mathrm{diag}(\rho)$. See \Cref{sec:FIE_num} for details. 


\subsection{DARTR: data-adaptive RKHS Tikhonov regularization} 
We review DARTR, a data-adaptive RKHS Tikhonov regularization algorithm introduced in \cite{LLA22}. 

Specifically, it solves the problem \cref{eq:Ab} with regularization:  
\[
(\widehat x_{\lambda_*}, \lambda_*) = \argmin_{x\in \R^n,\lambda\in \R^+} \calE_\lambda(x), \quad \text{ where }  \calE_\lambda(x):= \|Ax-b\|^2 + \lambda \|x\|_{C_{rksh}}^2,
\]
where the norm $\| \cdot \|_{C_{rksh}}$ is the DA-RKHS norm introduced in \Cref{thm:RKHS_norm}. 
A direct solution minimizing $\calE_\lambda(x)$ is to solve $(A^\top A + \lambda  C_{rkhs}) x_\lambda =A^\top b$. 
However, the computation of $ C_{rkhs}$ requires a pseudo-inverse that may cause numerical instability. 

DARTR introduces a transformation matrix $C_* := V \Lambda^{1/2} $ to avoid using the pseudo-inverse.   Note that $C_*^\top   C_{rkhs} C_* = \begin{pmatrix}I_r & 0 \\ 0 & 0  \end{pmatrix} := \mathbf{I}_r$, where $I_r$ is the identity matrix with rank $r$, the number of positive eigenvalues in $\Lambda$. Then, the linear equation $(A^\top A + \lambda  C_{rkhs}) x_\lambda =A^\top b$ is equivalent to 
\begin{equation}\label{eq:lsqminnorm}
(C_*A^\top A C_*+ \lambda \mathbf{I}_r) \widetilde x_\lambda = C_* A^\top b
\end{equation}
 with $\widetilde x_\lambda= C_*^{-1} x_\lambda$. Thus, DARTR computes $\widetilde x_\lambda$ in the above equation by least squares with minimal norm, and returns $x_\lambda= C_* \widetilde x_\lambda$. 
 
DARTR is a direct method based on matrix decomposition, and it takes $O(mn^2+n^3)$ flops. Hence, it is computationally infeasible when $n$ is large. The iterative method in the next section implements the RKHS regularization in a scalable fashion.

\section{Iterative Regularization by DA-RKHS}\label{sec:alg-idartr}
This section introduces a subspace project method tailored to utilize the DA-RKHS for iterative regularization. As an iterative method, it achieves scalability by accessing the coefficient matrix only via matrix-vector multiplications, producing a sequence of estimators until reaching a desired solution. This section follows the notation conventions in \Cref{tab:iDartr}.  \vspace{-3mm}
 \begin{table}[tbh!] 
  \caption{Table of notations.  }  \label{tab:iDartr}\vspace{-3mm}
  \centering
\begin{tabular}{ c ll }
\toprule
\hline  $A, B, C $ & matrix or array by capital letters\\
\hline $b, c, x,y,z,u,v$ & vector by regular letters \\
\hline $\alpha,\beta,\gamma$ & scalar by Greek letters\\ 
\hline $\mathcal{R}(A)$ and $\mathcal{N}(A)$ & the range and null spaces of matrix $A$ \\
\bottomrule
\end{tabular}
\end{table}

\subsection{Overview}
Our regularization method is based on subspace projection in the DA-RKHS. 
It iteratively constructs a sequence of linear subspaces $\mathcal{S}_k$ of the DA-RKHS $(\mathbb{R}^{n},\langle \cdot , \cdot \rangle_{ C_{rkhs}})$, and recursively solves projected problems 
\begin{equation}\label{subspace_regu}
	x_k =\argmin_{\vecphi\in\mathcal{X}_k}\|\vecphi\|_{ C_{rkhs}}, \ \ \mathcal{X}_k = \{\vecphi: \min_{\vecphi\in\mathcal{S}_k}\|A\vecphi-\obs\|_{2}\}. 
\end{equation}
This process yields a sequence of solutions $\{\vecphi_k\}$, each emerging from its corresponding subspace.  We ensure the uniqueness of the solution within each iteration, as detailed in \Cref{thm:solu_uniqueness}.  The iteration proceeds until it meets an early stopping criterion, designed to exclude excessive noisy components and thereby achieve effective regularization. The spaces $\mathcal{S}_k$ are called the \textit{solution subspaces}, and the iteration number $k$ plays the role of the regularization parameter. 

\Cref{alg:iDartr} outlines this procedure, which is a recursion of the following three parts. 
\begin{enumerate}
\item[P1] \emph{Construct the solution subspaces.} We introduce a new generalized Golub-Kahan bidiagonalization (gGKB) to construct the solution subspaces in the DA-RKHS iteratively. The procedure is presented in \Cref{alg:gGKB}.

\item[P2] \emph{Recursively update the solution to the projected problem.} We solve the least squares problem in the solution subspaces in \cref{subspace_regu} efficiently by a new LSQR-type algorithm, \Cref{alg:update}, updating $\|x_k\|_{C_{rkhs}}$ and the residual norm  $\|A\vecphi-\obs\|_2$ without even computing the residual. 

\item[P3] \emph{Regularize by early stopping.} We select the optimal $k$ by either the discrepancy principle (DP) when we have an accurate estimate of $\|\bw\|_2$ or the L-curve criterion otherwise.  
\end{enumerate}
\begin{algorithm}[htb]
	\caption{iDARR: Iterative Data-Adaptive RKHS Regularization}\label{alg:iDartr}
 \algorithmicrequire\ $A\in\mathbb{R}^{m\times n}$, $b\in\mathbb{R}^{m}$, $B=\mathrm{diag}(\rho)$, $\vecphi_{0}=\mathbf{0}$, $\bar{\vecphi}_0=\mathbf{0}$, where $\rho$ is the exploration measure in \cref{eq:exp_measure}.  
	\begin{algorithmic}[1]
		\For{$k=1,2,\ldots,$}
		\State  P1. Compute $u_k$, $z_k$, $\bar{z}_k$, $\alpha_k$, $\beta_k$ by \texttt{gGKB} in \Cref{alg:gGKB}. 
		\State P2. Update $\vecphi_k$, $\bar{\gamma}_{k+1}$, $\|\vecphi_k\|_{ C_{rkhs}}$, etc. by \Cref{alg:update}. 
		\State P3: Stop at iteration $k_*$ if \textit{Early stopping criterion} is satisfied.   \Comment{L-curve or DP}
		\EndFor
	\end{algorithmic}
\algorithmicensure\ Final solution $\vecphi_{k_*}$  
\end{algorithm}

 In the next subsection, we present details for these key parts. Then, we analyze the computational complexity of the algorithm. 
 
\subsection{Algorithm details and derivations}
This subsection presents detailed derivations for the three parts P1-P3 in \Cref{alg:iDartr}. 

\noindent\textbf{P1. Construct the solution subspaces.} 
 We construct the solution subspaces by elaborately incorporating the regularization term $\|\cdot\|_{ C_{rkhs}}^2$ in the Golub-Kahan bidiagonalization (GKB) process. A key point is to use $ C_{rkhs}^{\dagger} =B^{-1}A^\top AB^{-1} $ to avoid explicitly computing $ C_{rkhs}$, which involves the costly spectral decomposition of the normal operator, see \cref{eq:mat_rkhs} in \Cref{thm:RKHS_norm}. 
 
 
Consider first the case where $ C_{rkhs}$ is positive definite. In this scenario, $A$ has full column rank, and the $ C_{rkhs}$-inner product Hilbert space $(\mathbb{R}^{n}, \langle \cdot , \cdot \rangle_{ C_{rkhs}})$ is a discrete representation of the RKHS $H_G$ with the given basis functions. Note that the true solution is mapped to the noisy $\obs$ by $A: (\mathbb{R}^{n}, \langle \cdot , \cdot \rangle_{ C_{rkhs}}) \to (\mathbb{R}^{m}, \langle \cdot , \cdot \rangle_2)$. 
Let $A^{*}: (\mathbb{R}^{m}, \langle \cdot , \cdot \rangle_2) \to (\mathbb{R}^{n}, \langle \cdot , \cdot \rangle_{ C_{rkhs}})$ be the adjoint of $A$, i.e. $\langle A\vecphi, \obs \rangle_2=\langle \vecphi, A^{*}\obs \rangle_{ C_{rkhs}}$ for any $\vecphi\in\mathbb{R}^{n}$ and $\obs\in \mathbb{R}^{m}$. By definition, the matrix-form expression of $A^{*}$ is  
\begin{equation}\label{eq:A*}
	A^{*} = C_{rkhs}^{-1}A^\top, 
\end{equation}
since $\langle A \vecphi, \obs \rangle_{2}=\vecphi^\top  A^\top\obs$ and $\langle \vecphi, A^{*}\obs \rangle_{ C_{rkhs}}=\vecphi^\top  C_{rkhs}A^{*}\obs$ for any $\vecphi$ and $\obs$.

The Golub-Kahan bidiagonalization (GKB) process recursively constructs orthonormal bases for these two Hilbert spaces starting with the vector $\obs$ as follows: 
\begin{subequations}{\label{eq:GKB}}
\begin{align}
	& \beta_1u_1 = \obs, \ \ \alpha_1z_1 = A^{*}u_1,  \label{GKB1} \\
	& \beta_{i+1}u_{i+1} = Az_i - \alpha_iu_i, \label{GKB2} \\
	& \alpha_{i+1}z_{i+1} = A^{*}u_{i+1} - \beta_{i+1}z_i, \label{GKB3}
	\end{align}
\end{subequations}
where $u_i\in(\mathbb{R}^{m}, \langle \cdot , \cdot \rangle_2)$, $z_i\in(\mathbb{R}^{n}, \langle \cdot , \cdot \rangle_{ C_{rkhs}})$ with $z_0=\boldsymbol{0}$, and $\alpha_i$, $\beta_i$ are normalizing factor such that $\|u_i\|_2=\|z_{i}\|_{ C_{rkhs}}=1$. The iteration starts with $u_1=\obs/\beta_1$ with $\beta_1=\|\obs\|_2$. Using $A^*$ in \cref{eq:A*}, we write \cref{GKB3} as 
\begin{align}\label{GKB4}
	\alpha_{i+1}z_{i+1}
	=  C_{rkhs}^{-1}A^\top u_{i+1}-\beta_{i+1}z_i 
\end{align}
with $\alpha_{i+1} = \| C_{rkhs}^{-1}A^\top  u_{i+1}-\beta_{i+1}z_{i}\|_{ C_{rkhs}}$. To compute $\alpha_{i+1}$ without explicitly computing $ C_{rkhs}$, define $\bar{z}_i= C_{rkhs}z_i$. Then we have
\begin{align}\label{GKB5}
	\alpha_{i+1}\bar{z}_{i+1} = A^{\top}u_{i+1}-\beta_{i+1}\bar{z}_i,
\end{align}
where $\bar{z}_0:=\boldsymbol{0}$. Let $p=A^{\top}u_{i+1}-\beta_{i+1}\bar{z}_i$. Then we obtain $\alpha_{i+1}=\| C_{rkhs}^{-1}p\|_{ C_{rkhs}}=(p^{\top} C_{rkhs}^{-1}p)^{1/2}$, which uses $ C_{rkhs}^{-1}=B^{-1}A^\top A B^{-1}$ without computing $C_{rkhs}$.  

 Next, consider that $ C_{rkhs}$ is positive semidefinite. The iterative process remains the same with $ C_{rkhs}^{-1}$ replaced by the pseudo-inverse $ C_{rkhs}^{\dag}$, because $ C_{rkhs}^{\dag}=B^{-1}A^\top A B^{-1}$ has the same form as $ C_{rkhs}^{-1}$ for the non-singular case. Specifically, the recursive relation \cref{GKB4} becomes
\begin{align}\label{GKB4m}
	\alpha_{i+1}z_{i+1}	=  C_{rkhs}^{\dag}A^\top u_{i+1}-\beta_{i+1}z_i. 
\end{align}
 To compute $\alpha_{i+1}$, we use the property that $z_i\in\mathcal{R}( C_{rkhs})$, which will be proved in \Cref{prop:z_in_RKHS}. Note that $ C_{rkhs}^{\dag} C_{rkhs}=P_{\mathcal{N}( C_{rkhs})^{\perp}}=P_{\mathcal{R}( C_{rkhs})}$ since $ C_{rkhs}$ is symmetric, where $P_{\mathcal{X}}$ is the projection operator onto subspace $\mathcal{X}$. It follows that $ C_{rkhs}^{\dag} C_{rkhs}z_i=z_i$. Therefore, \cref{GKB4m} becomes
\begin{equation*}
	\alpha_{i+1}z_{i+1}
	=  C_{rkhs}^{\dag}(A^\top u_{i+1}-\beta_{i+1} C_{rkhs}z_i).
\end{equation*}
Letting $\bar{z}_i= C_{rkhs}z_i$ and $p=A^{\top}u_{i+1}-\beta_{i+1}\bar{z}_i$ again, we get $\alpha_{i+1}=\| C_{rkhs}^{\dag}p\|_{ C_{rkhs}}=(p^{\top} C_{rkhs}^{\dag}p)^{1/2}$.

Thus, the two cases of $ C_{rkhs}$ lead to the same iterative process. We summarize the iterative process in \Cref{alg:gGKB}, and call it \textit{generalized Golub-Kahan bidiagonalization} (gGKB). 
\begin{algorithm}[htb]
	\caption{Generalized Golub-Kahan bidiagonalization (gGKB)}\label{alg:gGKB}
 	\algorithmicrequire\ $A\in\mathbb{R}^{m\times n}$, $\obs\in\mathbb{R}^{m}$, $B=\mathrm{diag}(\rho)$
	\begin{algorithmic}[1]
		\State Initialize: let $\beta_1=\|b\|_2$, \ $u_1=b/\beta_1$, and compute $p=A^\top u_1$, \, $s=B^{-1}A^\top AB^{-1}p$. 
		\State Let $\alpha_1 = (s^\top p)^{1/2}$, \ $z_1=s/\alpha_1$, \ $\bar{z}_1=p/\alpha_1$.  
		\For {$i=1,2,\dots,k,$}
		\State $r=Az_i-\alpha_iu_i$, \, $\beta_{i+1}=\|r\|_2$, \ $u_{i+1}=r/\beta_{i+1}$; 
		\State $p=A^\top u_{i+1}-\beta_{i+1}\bar{z}_i$, \,  
               $s=B^{-1}A^\top A B^{-1}p$;  \Comment{ $C_{rkhs}^\dag =B^{-1}A^\top A B^{-1}$ }
		\State $\alpha_{i+1}= (s^\top p)^{1/2}$, \ $z_{i+1}=s/\alpha_{i+1}$, \ $\bar{z}_{i+1}=p/\alpha_{i+1}$.  \Comment{$\bar{z}_i= C_{rkhs}z_i$ \quad\quad\quad\quad\quad\quad }
		\EndFor
	\end{algorithmic}
 	\algorithmicensure\ $\{\alpha_i, \beta_i\}_{i=1}^{k+1}$, \ $\{u_i, z_i, \bar{z}_i\}_{i=1}^{k+1}$
\end{algorithm}

Suppose the gGKB process terminates at step $k_{t} := \max_{k\geq 1}\{\alpha_{k}\beta_{k}>0\}$. We show in \Cref{prop:onb-GKB}--\Cref{prop:gGKB_iter_num} that the output vectors $\{u_i\}_{i=1}^{k_t}$ and  $\{z_i\}_{i=1}^{k_t}$ are orthonormal in $(\mathbb{R}^{m}, \langle \cdot , \cdot \rangle_2)$ and $(\mathbb{R}^{n}, \langle \cdot , \cdot \rangle_{ C_{rkhs}})$, respectively. In particular, they span two Krylov subspaces generated by $\{A C_{rkhs}^{\dag}A^{\top}, b\}$ and $\{C_{rkhs}^{\dag}A^{\top}A,  C_{rkhs}^{\dag}A^{\top}b\}$, respectively.

In matrix form, the $k$-step gGKB process with $k<k_t$, starting with vector $b$, produces a $2$-orthonormal matrix $U_{k+1}=(u_1,\dots,u_{k+1})\in \R^{m\times (k+1)}$ (i.e., $U_{k+1}^\top U_{k+1} = I_{k+1}$), a $ C_{rkhs}$-orthonormal matrix $Z_{k+1}=(z_1,\dots,z_{k+1})\in \R^{n\times (k+1)}$, and a bidiagonal matrix 
\begin{equation}
	B_{k}=\begin{pmatrix}
		\alpha_{1} & & & \\
		\beta_{2} &\alpha_{2} & & \\
		&\beta_{3} &\ddots & \\
		& &\ddots &\alpha_{k} \\
		& & &\beta_{k+1}
		\end{pmatrix}\in  \mathbb{R}^{(k+1)\times k}
\end{equation}
such that the recursion in \cref{GKB1},\cref{GKB2} and \cref{GKB4m} can be written as 
\begin{subequations}{\label{eq:GKB_matForm}}
\begin{align}
	& \beta_1U_{k+1}e_{1} = b, \label{GKB6} \\
	& AZ_k = U_{k+1}B_k, \label{GKB7} \\
	&  C_{rkhs}^{\dag}A^\top U_{k+1} = Z_kB_{k}^{T}+\alpha_{k+1}z_{k+1}e_{k+1}^\top , \label{GKB8}
\end{align}
\end{subequations}
where $e_1$ and $e_{k+1}$ are the first and $(k+1)$-th columns of $I_{k+1}$.  
We emphasize that $B_k$ is a bidiagonal matrix of full column rank, since $\alpha_i, \beta_i>0$ for all $i\leq k+1$. 

\noindent\textbf{P2. Recursively update the solution to the projected problem.}  
For each $k\leq k_t$, i.e., before the gGKB terminates, we solve \cref{subspace_regu} in the subspace 
\begin{equation}\label{eq:Sk}
\mathcal{S}_k:=\mathrm{span}\{z_1,\dots,z_k\}
\end{equation} 
and compute the RKHS norm of the solution.

We show first the uniqueness of the solution to \cref{subspace_regu}. 
From \cref{GKB7} we have $B_k=U_{k+1}^\top AZ_k$, which implies that $B_k$ is a projection of $A$ onto the two subspaces $\mathrm{span}\{U_{k+1}\}$ and $\mathrm{span}\{Z_k\}$. Since any vector $\vecphi$ in $\mathcal{S}_k$ can be written as $\vecphi=Z_k y $ with a $ y \in \R^k$, we obtain from \cref{GKB6} and \cref{GKB7} that for any $\vecphi=Z_k y$, 
 \begin{equation}\label{eq:min-Sk}
\begin{aligned}
     \min_{\vecphi=Z_k y }\|A\vecphi-\obs\|_2 
	 &= \min_{ y \in\mathbb{R}^{k}}\|AZ_k y -U_{k+1}\beta_1e_{1}\|_2 \\
	 &= \min_{ y \in\mathbb{R}^{k}}\|U_{k+1}B_k y -U_{k+1}\beta_1e_{1}\|_2 
      = \min_{ y \in\mathbb{R}^{k}}\|B_k y -\beta_1e_{1}\|_2.
\end{aligned}
\end{equation}
Since $k\leq k_t$, $B_k$ has full column rank, this $k$-dimensional least squares problem has a unique solution. 
Therefore, the unique solution to \cref{subspace_regu} is
\begin{equation}\label{eq:solu_in_Rk}
	\vecphi_k = Z_k y _k, \ \  y _k=\argmin_{ y \in\mathbb{R}^{k}}\|B_k y -\beta_1e_{1}\|_2. 
\end{equation}
We note that the above uniqueness is independent of the specific construction of the basis vectors $\{z_k\}$. In general, as long as $\mathcal{S}_k\subseteq \mathcal{R}( C_{rkhs})$, the solution to \cref{subspace_regu} is unique; see \Cref{thm:solu_uniqueness}. 

Importantly, we recursively compute $x_k$ without explicitly solving $y_k$ in \cref{eq:solu_in_Rk}, avoiding the $O(k^3)$ computational cost. To this end, we adopt a procedure similar to the LSQR algorithm in \cite{Paige1982} to iteratively update $\vecphi_k$ from $\vecphi_0=\mathbf{0}$. It starts from the following Givens QR factorization: 
\[Q_k\begin{pmatrix}
		B_k & \beta_{1}e_{1}
	\end{pmatrix}
	=\begin{pmatrix}
		R_k & f_k \\
		    & \bar{\gamma}_{k+1}
	\end{pmatrix}: =
	\left(\begin{array}{ccccc : c}		
		\rho_1& \theta_2 &  &  &  & \gamma_1 \\		
		& \rho_2 & \theta_3 & & & \gamma_2 \\		
		& & \ddots & \ddots & & \vdots \\ 		
		& & & \rho_{k-1} & \theta_k & \gamma_{k-1} \\		
		& & &  & \rho_k & \gamma_k \\	\hdashline	
		& & &  &  & \bar{\gamma}_{k+1}	
	\end{array}\right) ,\]
where the orthogonal matrix $Q_k$ is the product of a series of Givens rotation matrices, and $R_k$ is a bidiagonal upper triangular matrix; see \cite[\S 5.2.5]{Golub2013}. We implement the Givens QR factorization using the procedure in \cite{Paige1982}, which recursively zeros out the subdiagonal elements $\beta_{i}$ for each $2\leq i\leq k+1$. Specifically, at the $i$-th step, a Givens rotation zeros out $\beta_{i+1}$ by
\[\begin{pmatrix}
	c_i & s_i \\
	s_i & -c_i
\end{pmatrix}
\begin{pmatrix}
	\bar{\rho}_i & 0 & \bar{\gamma}_i  \\
	\beta_{i+1} & \alpha_{i+1} & 0
\end{pmatrix}=
\begin{pmatrix}
	\rho_i & \theta_{i+1} & \gamma_i \\
	0 & \bar{\rho}_{i+1}  & \bar{\gamma}_{i+1}
\end{pmatrix},\]
where the entries $c_i$ and $s_i$ of the Givens rotations satisfy $c_i^2+s_i^2=1$, and the elements $\rho_i$, $\theta_{i+1}$, $\bar{\rho}_{i+1}$, $\gamma_i$, $\bar{\gamma}_{i+1}$ are recursively updated accordingly; see \cite{Paige1982} for more details. 

As a result of the QR factorization, we can write 
\begin{equation}\label{res1}
	\|B_k y -\beta_1e_{1}\|_{2}^2=\Big\|Q_k\begin{pmatrix}
		B_k & \beta_{1}e_{1}
	\end{pmatrix}\begin{pmatrix}
		 y  \\ -1
	\end{pmatrix}\Big\|_{2}^2=\|R_k y -f_k\|_{2}^2+|\bar{\gamma}_{k+1}|^2. 
\end{equation}
Hence, the solution to $\min_{ y \in\mathbb{R}^k}\|B_k y -\beta_{1}e_{1}\|$ is $ y _k=R_{k}^{-1}f_k$. Factorizing $R_k$ as
\[R_k=D_k\bar{R}_k, \ \ 
D_k :=\begin{pmatrix}
	\rho_1 & & & \\
	& \rho_2 & & \\
	& & \ddots & \\
	& & & \rho_k
\end{pmatrix}, \ 
\bar{R}_k :=\begin{pmatrix}
	1 & \theta_2/\rho_1 & & \\
	& 1 & \theta_3/\rho_2 & \\
	& & \ddots & \theta_k/\rho_{k-1} \\
	& & & 1
\end{pmatrix}\]
and denoting $W_k=Z_k\bar{R}_{k}^{-1}=(w_1,\dots,w_k)$, we get
\begin{equation*}\label{up1_x}
	\vecphi_k=Z_k  y _k=Z_k R_{k}^{-1}f_k=(Z_k\bar{R}_{k}^{-1})(D_{k}^{-1}f_k)= W_k (D_{k}^{-1}f_k) = \sum_{i=1}^k (\gamma_{i}/\rho_{i})w_i.
\end{equation*}
Updating $x_k$ recursively, and solving $W_k\bar{R}_k=Z_k$ by back substitution, we obtain:  
\begin{equation}\label{up2_x}
	\vecphi_{i}=\vecphi_{i-1}+(\gamma_{i}/\rho_{i})w_{i}, \ \ \  
	w_{i+1}=z_{i+1}-(\theta_{i+1}/\rho_{i})w_{i}, \quad \forall 1\leq  i\leq k.
\end{equation}

Lastly, we compute $\|\vecphi_i\|_{ C_{rkhs}}$ without an explicit $C_{rkhs}$. We have from \cref{up2_x}: 
\begin{equation*}
	 C_{rkhs}\vecphi_{i}= C_{rkhs}\vecphi_{i-1}+(\gamma_{i}/\rho_{i}) C_{rkhs}w_{i}, \ \ 
	 C_{rkhs}w_{i+1}= C_{rkhs}z_{i+1}-(\theta_{i+1}/\rho_{i}) C_{rkhs}w_{i}.
\end{equation*}
Letting $\bar{\vecphi}_i= C_{rkhs}\vecphi_i$ and $\bar{w}_i= C_{rkhs}w_i$, and recalling that $\bar{z}_{i}= C_{rkhs}z_{i}$, we have
\begin{equation}
\|\vecphi_i\|_{ C_{rkhs}}^2=\vecphi_{i}^{\top}\bar{\vecphi}_i,\quad 	\bar{\vecphi}_{i}=\bar{\vecphi}_{i-1}+(\gamma_{i}/\rho_{i})\bar{w}_{i}, \ \ \  
	\bar{w}_{i+1}=\bar{z}_{i+1}-(\theta_{i+1}/\rho_{i})\bar{w}_{i}.
\end{equation}

\begin{algorithm}[htb]
	\caption{Updating procedure}\label{alg:update}
	\begin{algorithmic}[1]
		\State {Let $\vecphi_{0}=\mathbf{0},\ \bar{\vecphi}_0=\mathbf{0}, \ w_{1}=z_{1}, \ \bar{w}_{1}=\bar{z}_1, \ \bar{\gamma}_{1}=\beta_{1},\ \bar{\rho}_{1}=\alpha_{1}$}
		\For{$i=1,2,\ldots,k,$}
		\State $\rho_{i}=(\bar{\rho}_{i}^{2}+\beta_{i+1}^{2})^{1/2}$, $c_{i}=\bar{\rho}_{i}/\rho_{i},\ s_{i}=\beta_{i+1}/\rho_{i}$
		\State$\theta_{i+1}=s_{i}\alpha_{i+1},\ \bar{\rho}_{i+1}=-c_{i}\alpha_{i+1}$, $\gamma_{i}=c_{i}\bar{\gamma}_{i},\ \bar{\gamma}_{i+1}=s_{i}\bar{\gamma}_{i} $
		\State $\vecphi_{i}=\vecphi_{i-1}+(\gamma_{i}/\rho_{i})w_{i} $, $w_{i+1}=z_{i+1}-(\theta_{i+1}/\rho_{i})w_{i}$
		\State $\bar{\vecphi}_{i}=\bar{\vecphi}_{i-1}+(\gamma_{i}/\rho_{i})\bar{w}_{i}$, $\bar{w}_{i+1}=\bar{z}_{i+1}-(\theta_{i+1}/\rho_{i})\bar{w}_{i}$
		\State $\|\vecphi_i\|_{ C_{rkhs}}=(\vecphi_{i}^{\top}\bar{\vecphi}_i)^{1/2}$
		\EndFor
	\end{algorithmic}
\end{algorithm}

 The whole updating procedure is described in \Cref{alg:update}. 
This algorithm yields the residual norm $\|A \vecphi_k-\obs\|_2$ without explicitly computing the residual. In fact, by \cref{eq:min-Sk} and \cref{res1} we have 
\begin{equation}\label{res2}
	\bar{\gamma}_{k+1}=\|B_k y _{k}-\beta_1 e_{1}\|_2 = \|A \vecphi_k-\obs\|_2.
\end{equation}
Note that $\bar{\gamma}_{k+1}$ decreases monotonically since $\vecphi_k$ minimizes $\|A\vecphi-\obs\|_2$ in the gradually expanding subspace $\mathrm{span}\{Z_k\}$.

Importantly, \Cref{alg:update} efficiently computes the solution $x_k$ and $\|x_k\|_{C_{rkhs}}$. At each step of updating $x_i$ or $w_{i+1}$, the computation takes $O(2n)$ flops. Similarly, for updating $\bar{x}_i$ or $\bar{w}_{i+1}$, as well as for computing $\|\vecphi_i\|_{ C_{rkhs}}$, the number of flops are also $O(2n)$. Therefore, the dominant computational cost is $O(10n)$. In contrast, if $y_k$ is solved explicitly at each step, it takes $O(\sum_{i=1}^{k}i^3)\sim O(k^4)$ flops; together with the step of forming $x_k=Z_ky_k$ that takes $O(kn)$ flops, they lead to a total cost of $O(kn+k^4)$ flops. Thus, the LSQR-type iteration in \Cref{alg:update} significantly reduces the number of flops from $O(kn+k^4)$ to $O(10n)$.

\noindent\textbf{P3. Regularize by early stopping.} 
An early stopping strategy is imperative to prevent the solution subspace from becoming excessively large, which could otherwise compromise the regularization. This necessity is rooted in the phenomenon of \emph{semi-convergence}: the iteration vector $x_k$ initially approaches an optimal regularized solution but subsequently moves towards the unstable naive solution to $\min_{x\in\mathcal{R}( C_{rkhs})}\|Ax-b\|_2$, as detailed in \Cref{naive}.

For early stopping, we adopt the L-curve criterion, as outlined in \cite{gazzola2019ir}. This method identifies the ideal early stopping iteration $k_*$ at the corner of the curve represented by 
\begin{equation}
	\left(\log\|A\vecphi_{k}-\obs\|_2, \|\vecphi_k\|_{ C_{rkhs}}\right) = \left(\log\bar{\gamma}_{k+1}, \log\|\vecphi_k\|_{ C_{rkhs}}\right). 
\end{equation}
Here $\bar{\gamma}_{k+1}$ and $\|\vecphi_k\|_{ C_{rkhs}}$ are computed with negligible cost in \Cref{alg:update}. To construct the L-curve effectively, we set the gGKB to execute at least 10 iterations. Additionally, to enhance numerical stability, we stop the gGKB when either $\alpha_i$ or $\beta_i$ is near the machine precision, as inspired by \Cref{naive}.  

It is noteworthy that the \emph{discrepancy principle} (DP) presents a viable alternative when the measurement error $\|\bw\|_2$ in \cref{eq:Ab} is available with a high degree of accuracy. The DP halts iterations at the earliest instance of $k$ that satisfies 
$	\bar{\gamma}_{k+1}=\|Ax_k-b\|_2 \leq \tau\|\bw\|_2,
$ 
where $\tau$ is chosen to be marginally greater than 1.


\subsection{Computational Complexity} \label{subsec:complexity}
Suppose the algorithm takes $k$ iterations and the basis matrix $B$ is diagonal. Recall that $A\in \R^{m\times n}$, $B\in \R^{n\times n}$, $u_i\in\R^m$ and $z_i\in \R^n$. The total computational cost of \Cref{alg:iDartr} is about $O(3mnk)$ when $m\leq n/3$ or $k<n/3$;  and about $O((m+k)n^2)$ when otherwise.  The cost is dominated by the gGKB process since the cost of the update procedure in \Cref{alg:update} is only $O(n)$ at each step. 

The gGKB can be computed in two approaches. The first approach uses only matrix-vector multiplication. The main computations in each iteration of gGKB occur at the matrix-vector products $p=A^{\top}u_i$ and $s=B^{-1}A^{\top}AB^{-1}p = B^{-1}(A^{\top}(A(B^{-1}p)))$, which take $O(mn)$ and $O(2mn)$ flops respectively. Thus, the total computational cost of gGKB is $O(3mnk)$ flops. Another approach is using $\bA=A^{\top}A$ instead of $A^{\top}$ and $A$ to compute $s$. In this approach, the computation of $\bA$ from $A$ takes $O(mn^2)$ flops, and the matrix-vector multiplication $\bA v$ in each iteration takes about $O(n^2)$ flops. Hence, the total cost of $k$ iterations is $O(mn^2+kn^2)$. The second approach is faster when $mn^2+n^2k < 3mnk$, or equivalently, $(3m-n)k>mn$. That is, roughly speaking, $m>n/3$ and $k>mn/(3m-n)>n/3$.

In practice, the matrix-vector computation is preferred since the iteration number $k$ is often small. The resulting iDARR algorithm takes about $O(3mnk)$ flops. 


\section{Properties of gGKB}\label{sec:gGKB}
This section studies the properties of the gGKB in \Cref{alg:gGKB}, including the structure of the solution subspace, the orthogonality of the resulted vectors $\{u_i\}_{i=1}^{k+1}$ and $\{z_i\}_{i=1}^{k+1}$ and the number of iterations at termination, defined as:  
\begin{equation}\label{eq:gGK_niter}
\textbf{gGKB terminate step: }\quad  	k_{t}: = \max_{k\geq 1}\{\alpha_{k}\beta_{k}>0\}. 
\end{equation} 
Additionally, we show that the solution to \cref{subspace_regu} is unique in each iteration. 

Throughout this section, let $r$ denote the rank of $\Lambda$ and let $V_r$ denote the first $r$ columns of $V$, where $\Lambda$ and $V$ are matrices constituted by the generalized eigenvalues and eigenvectors of $\{A,B\}$ in \cref{eq:mat_rkhs} in \Cref{thm:RKHS_norm}. We have $\mathrm{rank}(A)=r$. Recall that $C_{rkhs} =  (V\Lambda V^\top)^{\dagger} = B (A^\top A)^\dagger B$. Note that the DA-RKHS is $( \mathcal{R}( C_{rkhs}),\langle \cdot , \cdot \rangle_{ C_{rkhs}})$.

 \subsection{Properties of gGKB}
 We show first that the gGKB-produced vectors $\{u_i\}_{i=1}^{k+1}$ and $\{z_i\}_{i=1}^{k+1}$ are orthogonal in $\R^n$ and in the DA-RKHS, and the solution subspaces of the gGKB are RKHS-restricted Krylov subspaces. 
 
 \begin{definition}[RKHS-restricted Krylov subspace] Let  $A\in \R^{m\times n}$ and $b\in \R^n$, and let $B\in\R^{n\times n}$ be a symmetric positive definite matrix. Let $C_{rkhs} = B(A^\top A)^\dagger B$, which defines an RKHS $(\mathbb{R}^{n},\langle \cdot , \cdot \rangle_{ C_{rkhs}})$. The \emph{RKHS-restricted Krylov subspaces} are 
\begin{equation}\label{space_z}
		\mathcal{K}_{k+1}( C_{rkhs}^{\dag}A^{\top}A,  C_{rkhs}^{\dag}A^{\top}b) = \mathrm{span}\{( C_{rkhs}^{\dag}A^{\top}A)^{i} C_{rkhs}^{\dag}A^{\top}b\}_{i=0}^{k},\quad k\geq 0. 
	\end{equation}
\end{definition}

The main result is the following. 
\begin{theorem}[Properties of gGKB] Recall $k_t$ in \cref{eq:gGK_niter}, and the gGKB generates vectors $\{u_i\}_{i=1}^{k}$, $\{z_i\}_{i=1}^{k}$ and $\mathcal{S}_k= \mathrm{span}\{z_i\}_{i=1}^k$. They satisfy the following properties: 
\begin{itemize}
\item[(i)] $\{u_i\}_{i=1}^{k}$ and $\{z_i\}_{i=1}^{k}$ are orthonormal in $\R^n$ and in $( \mathcal{R}( C_{rkhs}),\langle \cdot , \cdot \rangle_{ C_{rkhs}})$, respectively;  
\item[(ii)] 
$\mathcal{S}_k= \mathcal{K}_{k}( C_{rkhs}^{\dag}A^{\top}A,  C_{rkhs}^{\dag}A^{\top}b) $  for each $k\leq k_t$, and the termination iteration number is $k_t = \mathrm{dim}( \mathcal{K}_{\infty}( C_{rkhs}^{\dag}A^{\top}A,  C_{rkhs}^{\dag}A^{\top}b) )$. 
\end{itemize}
\end{theorem}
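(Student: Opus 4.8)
The plan is to establish (i) and (ii) by two interlocking inductions on the iteration index, both resting on the structural fact that every iterate $z_i$ lies in $\mathcal{R}(C_{rkhs})$ (proved in \Cref{prop:z_in_RKHS}). This is what makes the semidefinite form $\innerp{\cdot,\cdot}_{C_{rkhs}}$ a genuine inner product on the $z_i$ and validates the adjoint identity $\innerp{Az,c}_2=\innerp{z,A^{*}c}_{C_{rkhs}}$ with $A^{*}=C_{rkhs}^{\dagger}A^{\top}$ from \cref{eq:A*}: for $z\in\mathcal{R}(C_{rkhs})$ one has $C_{rkhs}C_{rkhs}^{\dagger}=P_{\mathcal{R}(C_{rkhs})}$ and $P_{\mathcal{R}(C_{rkhs})}z=z$, so $z^{\top}C_{rkhs}C_{rkhs}^{\dagger}A^{\top}c$ collapses to $z^{\top}A^{\top}c$. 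I would state this identity once and reuse it throughout.

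For part (i) I would run the Golub--Kahan induction adapted to the two inner products, establishing the $u$- and $z$-orthogonality in alternation. Assume $\{u_j\}_{j\le i}$ are $2$-orthonormal and $\{z_j\}_{j\le i}$ are $C_{rkhs}$-orthonormal. Rearranging \cref{GKB4m} gives $A^{*}u_j=\alpha_j z_j+\beta_j z_{j-1}$, so the recursion \cref{GKB2} and the adjoint identity yield, for $j\le i$, $\innerp{Az_i,u_j}_2=\innerp{z_i,A^{*}u_j}_{C_{rkhs}}=\alpha_j\innerp{z_i,z_j}_{C_{rkhs}}+\beta_j\innerp{z_i,z_{j-1}}_{C_{rkhs}}$; the hypothesis annihilates every term for $j<i$ and leaves exactly $\alpha_i$ at $j=i$, which cancels $\alpha_i\innerp{u_i,u_j}_2$, so $\beta_{i+1}\innerp{u_{i+1},u_j}_2=0$. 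With $u_{i+1}$ now orthonormal to its predecessors, the symmetric computation --- rearranging \cref{GKB2} to $Az_j=\beta_{j+1}u_{j+1}+\alpha_j u_j$ and using the adjoint identity in \cref{GKB4m} --- gives $\innerp{z_{i+1},z_j}_{C_{rkhs}}=0$ for $j\le i$. The normalizers $\alpha_i,\beta_i>0$ for $i\le k_t$ make all vectors unit and well defined.

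For part (ii), write $M:=C_{rkhs}^{\dagger}A^{\top}A$ and $\widehat{v}:=C_{rkhs}^{\dagger}A^{\top}b$. The base case is immediate since $z_1\propto C_{rkhs}^{\dagger}A^{\top}u_1\propto\widehat{v}$, giving $\mathcal{S}_1=\mathcal{K}_1$. For the inductive step I would eliminate $u_{i+1}$ between \cref{GKB2} and \cref{GKB4m} to obtain a three-term recurrence purely in the $z$'s,
\[ M z_i=\alpha_{i+1}\beta_{i+1}\,z_{i+1}+(\alpha_i^2+\beta_{i+1}^2)\,z_i+\alpha_i\beta_i\,z_{i-1}, \]
which shows $z_{i+1}\in\mathrm{span}\{z_{i-1},z_i,Mz_i\}$ and hence, under $\mathcal{S}_i=\mathcal{K}_i$, that $\mathcal{S}_{i+1}\subseteq\mathcal{K}_{i+1}$. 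Because (i) makes $\{z_1,\dots,z_{i+1}\}$ linearly independent in $\mathcal{R}(C_{rkhs})$, we have $\dim\mathcal{S}_{i+1}=i+1\ge\dim\mathcal{K}_{i+1}$, forcing equality. For the termination count, at $k=k_t$ the condition $\alpha_{k_t+1}\beta_{k_t+1}=0$ degenerates the recurrence so that $Mz_{k_t}\in\mathcal{S}_{k_t}$; combined with $Mz_j\in\mathrm{span}\{z_{j-1},z_j,z_{j+1}\}$ for $j<k_t$, this makes $\mathcal{S}_{k_t}$ an $M$-invariant subspace containing $\widehat{v}$, whence $\mathcal{K}_{\infty}=\mathcal{S}_{k_t}$ and $k_t=\dim\mathcal{K}_{\infty}$.

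The orthogonality bookkeeping is routine; the two delicate points are both tied to the semidefiniteness of $C_{rkhs}$. First, every use of the adjoint and of $C_{rkhs}^{\dagger}$ is legitimate only on $\mathcal{R}(C_{rkhs})$, so the argument must keep all iterates inside this subspace --- precisely the content of \Cref{prop:z_in_RKHS} that I invoke up front. Second, and this is where I expect the real obstacle to lie, the termination identity $Mz_{k_t}\in\mathcal{S}_{k_t}$ must be verified separately in the two ways the recursion can break: if $\beta_{k_t+1}=0$ then \cref{GKB2} gives $Az_{k_t}=\alpha_{k_t}u_{k_t}$, while if $\alpha_{k_t+1}=0$ then \cref{GKB4m} gives $C_{rkhs}^{\dagger}A^{\top}u_{k_t+1}=\beta_{k_t+1}z_{k_t}$ and \cref{GKB2} gives $Az_{k_t}=\beta_{k_t+1}u_{k_t+1}+\alpha_{k_t}u_{k_t}$; applying $C_{rkhs}^{\dagger}A^{\top}$ and substituting $A^{*}u_j=\alpha_j z_j+\beta_j z_{j-1}$ lands $Mz_{k_t}$ in $\mathcal{S}_{k_t}$ in both cases.
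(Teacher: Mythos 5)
Your proof is correct, but it follows a genuinely different route from the paper's. The paper proves orthonormality and the Krylov identification (its \Cref{prop:onb-GKB}) by \emph{reduction to the classical GKB}: it factorizes $C_{rkhs}^{\dag}=W_rW_r^{\top}$ with $W_r=V_r\Lambda_r^{1/2}$, writes $z_i=W_rv_i$, observes that the pairs $(u_i,v_i)$ evolve by the standard Golub--Kahan process for the matrix $AW_r$ between two Euclidean spaces, and then imports the orthonormality and Krylov-subspace statements from the classical theory; the termination count (its \Cref{prop:gGKB_iter_num}) is obtained by diagonalizing $AC_{rkhs}^{\dag}A^{\top}$, counting the nonzero projections $P_{\mathcal{G}_j}b$ onto its eigenspaces with a Vandermonde rank argument, and ruling out early termination by contradiction via the same three-term recurrence you derive. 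You instead give a self-contained Lanczos-style double induction for orthogonality, resting on the adjoint identity $\langle Az,c\rangle_2=\langle z,A^{*}c\rangle_{C_{rkhs}}$ valid on $\mathcal{R}(C_{rkhs})$ (both arguments need \Cref{prop:z_in_RKHS} for exactly this reason), and you get $k_t=\dim\mathcal{K}_{\infty}$ by showing $\mathcal{S}_{k_t}$ is an $M$-invariant subspace containing the generator, with the breakdown cases $\beta_{k_t+1}=0$ and $\alpha_{k_t+1}=0$ checked separately --- a point you rightly flag as the delicate one. Your approach is more elementary: it avoids any spectral factorization of $C_{rkhs}^{\dag}$ and never needs to compute $\dim\mathcal{K}_{\infty}$ explicitly. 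What it does not deliver is the finer information in the paper's \Cref{prop:gGKB_iter_num} --- the identification $k_t=q\leq r$ with $q$ the number of eigenspaces of $AC_{rkhs}^{\dag}A^{\top}$ that $b$ has a nonzero component in --- which the paper reuses later (e.g., in \Cref{naive}); conversely, the paper's reduction gives that characterization essentially for free once the lifting to $AW_r$ is in place.
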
 
\begin{proof} Part (i) follows from \Cref{prop:z_in_RKHS}, where we show that $z_k\in  \mathcal{R}( C_{rkhs}) $, and \Cref{prop:onb-GKB}, where we show the orthogonality of these vectors. 

For Part (ii), $\mathcal{S}_k= \mathcal{K}_{k}( C_{rkhs}^{\dag}A^{\top}A,  C_{rkhs}^{\dag}A^{\top}b) $ follows from that $\{z_i\}_{i=1}^{k}$ form an orthonormal basis of the RKHS-restricted Krylov subspace. We prove $k_t = \mathrm{dim}( \mathcal{K}_{\infty}( C_{rkhs}^{\dag}A^{\top}A,  C_{rkhs}^{\dag}A^{\top}b) )$ in \Cref{prop:gGKB_iter_num}. 
\end{proof}

\begin{proposition}\label{prop:z_in_RKHS}
For each $z_i$ generatad by gGKB in \cref{eq:GKB}, it holds that $z_i\in\mathcal{R}( C_{rkhs})$. Additionally, if $ q:=C_{rkhs}^{\dag}A^\top u_{i+1}-\beta_{i+1}z_i \neq \boldsymbol{0}$, then $\alpha_{i+1}=\|q\|_{ C_{rkhs}}\neq 0$. 
\end{proposition}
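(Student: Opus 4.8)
The plan is to prove both assertions together by induction on $i$, driven by the single structural fact that $\mathcal{R}(C_{rkhs}) = \mathcal{R}(C_{rkhs}^{\dag})$. This identity holds because $C_{rkhs} = B(A^\top A)^\dagger B$ is symmetric positive semidefinite (being $B M B$ with $B$ symmetric and $M = (A^\top A)^\dagger$ symmetric positive semidefinite); by the Moore--Penrose property $\mathcal{R}(C_{rkhs}^{\dag}) = \mathcal{R}(C_{rkhs}^{\top}) = \mathcal{R}(C_{rkhs})$, and both coincide with $\mathcal{N}(C_{rkhs})^{\perp}$.

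First I would settle the membership claim $z_i \in \mathcal{R}(C_{rkhs})$. For the base case, the initialization gives $\alpha_1 z_1 = C_{rkhs}^{\dag}A^\top u_1$, whose right-hand side manifestly lies in $\mathcal{R}(C_{rkhs}^{\dag}) = \mathcal{R}(C_{rkhs})$, so $z_1 \in \mathcal{R}(C_{rkhs})$. For the inductive step, assume $z_i \in \mathcal{R}(C_{rkhs})$. The recursion \cref{GKB4m} reads $\alpha_{i+1} z_{i+1} = C_{rkhs}^{\dag}A^\top u_{i+1} - \beta_{i+1} z_i$; the first summand lies in $\mathcal{R}(C_{rkhs}^{\dag}) = \mathcal{R}(C_{rkhs})$ and the second lies there by the inductive hypothesis. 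Since $\mathcal{R}(C_{rkhs})$ is a linear subspace, $\alpha_{i+1} z_{i+1}$ belongs to it, and hence so does $z_{i+1}$ whenever $\alpha_{i+1} \neq 0$.

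For the nondegeneracy claim, write $q = C_{rkhs}^{\dag}A^\top u_{i+1} - \beta_{i+1} z_i$. By the same decomposition as in the inductive step, $q \in \mathcal{R}(C_{rkhs})$. The crucial point is that $\|\cdot\|_{C_{rkhs}}$ is only a \emph{seminorm} on $\R^n$, so a nonzero vector could a priori have zero length; however, its restriction to $\mathcal{R}(C_{rkhs}) = \mathcal{N}(C_{rkhs})^{\perp}$ is a genuine norm, because $C_{rkhs}$ is strictly positive on $\mathcal{N}(C_{rkhs})^{\perp}$ (writing an eigendecomposition $C_{rkhs} = \sum_j \sigma_j e_j e_j^\top$, one has $q^\top C_{rkhs} q = \sum_{j:\sigma_j>0} \sigma_j \langle e_j, q\rangle^2 > 0$ for any nonzero $q$ in the range). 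Therefore $q \neq \boldsymbol{0}$ forces $\alpha_{i+1} = \|q\|_{C_{rkhs}} = (q^\top C_{rkhs} q)^{1/2} > 0$. To close the loop with the algorithmic formula I would also record that $C_{rkhs}^{\dag}C_{rkhs}z_i = z_i$ (valid precisely because $z_i \in \mathcal{R}(C_{rkhs})$), which lets one identify $q$ with $C_{rkhs}^{\dag}p$ for $p = A^\top u_{i+1} - \beta_{i+1}\bar z_i$, and then $\|q\|_{C_{rkhs}}^2 = p^\top C_{rkhs}^{\dag} C_{rkhs} C_{rkhs}^{\dag} p = p^\top C_{rkhs}^{\dag} p$ by the identity $C_{rkhs}^{\dag}C_{rkhs}C_{rkhs}^{\dag} = C_{rkhs}^{\dag}$.

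The one genuine subtlety, which I would emphasize, is exactly this seminorm degeneracy: positive semidefiniteness of $C_{rkhs}$ means $\|q\|_{C_{rkhs}} = 0$ need not imply $q = \boldsymbol{0}$ for a general $q \in \R^n$. The entire force of the second statement rests on first confining $q$ to $\mathcal{R}(C_{rkhs})$, where the seminorm is nondegenerate; this is why the membership claim is not merely decorative but is logically consumed by the nondegeneracy claim, and why the two parts must be proved in this order.
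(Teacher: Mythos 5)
Your proof is correct and follows essentially the same route as the paper's: induction on $i$ to show $z_i,q\in\mathcal{R}(C_{rkhs})$, followed by the observation that the $C_{rkhs}$-seminorm is nondegenerate on $\mathcal{R}(C_{rkhs})=\mathcal{N}(C_{rkhs})^{\perp}$ (the paper phrases this contrapositively: $\alpha_{i+1}=0$ forces $q\in\mathcal{N}(C_{rkhs})\cap\mathcal{R}(C_{rkhs})=\{\boldsymbol{0}\}$). The only cosmetic difference is that you justify $\mathcal{R}(C_{rkhs}^{\dag})=\mathcal{R}(C_{rkhs})$ via the general Moore--Penrose identity for symmetric matrices, whereas the paper writes $C_{rkhs}^{\dag}=V\Lambda V^{\top}$ explicitly and identifies the range with $\mathcal{R}(V_r)$.
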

\begin{proof}
We prove it by mathematical induction. For $i=1$, we obtain from \cref{GKB4m} and \Cref{thm:RKHS_norm} that $\alpha_1z_1= C_{rkhs}^{\dag}A^\top u_{1}=V\Lambda V^{\top}A^\top u_{1}\in\mathcal{R}(V_r)=\mathcal{R}( C_{rkhs})$, where $r$ is the rank of $\Lambda$. Suppose $z_i\in\mathcal{R}( C_{rkhs})$ for $i\geq 1$. Using again \cref{GKB4m} and \Cref{thm:RKHS_norm} we get 
$$\alpha_{i+1}z_{i+1}= C_{rkhs}^{\dag}A^\top u_{i+1}-\beta_{i+1}z_i=V\Lambda V^{\top}A^\top u_{i+1}-\beta_{i+1}z_i\in \mathcal{R}( C_{rkhs}).$$
 Therefore, $z_{i+1}\in\mathcal{R}( C_{rkhs})$, and $q\in\mathcal{R}( C_{rkhs})$. 

If $\alpha_{i+1}=0$, then $q\in\mathcal{N}( C_{rkhs})=\mathcal{R}( C_{rkhs})^{\perp}$. Therefore, 
$q=\boldsymbol{0}$.
\end{proof}

Thus, even if $ C_{rkhs}$ is singular (positive semidefinite), the gGKB in \Cref{alg:gGKB} does not terminate as long as the right-hand sides of \cref{GKB2} and \cref{GKB4m} are nonzero, since the iterative computation of $\{\beta_{i+1}, u_{i+1}\}$ and $\{\alpha_{i+1}, z_{i+1}\}$ can continue. Next, we show that these vectors are orthogonal. 
\begin{proposition}[Orthogonality]\label{prop:onb-GKB}
Suppose the $k$-step gGKB does not terminate, i.e. $k< k_t$, then $\{u_i\}_{i=1}^{k+1}$ is a 2-orthonormal basis of the Krylov subspace
	\begin{equation}\label{space_u} 
			\mathcal{K}_{k+1}(A C_{rkhs}^{\dag}A^{\top}, b)=\mathrm{span}\{(A C_{rkhs}^{\dag}A^{\top})^{i}b\}_{i=0}^{k},
	\end{equation}
	and $\{z_i\}_{i=1}^{k+1}$ is a $C_{rkhs}$-orthonormal basis for the RKHS-restricted Krylov subspace in \cref{space_z}. 
\end{proposition}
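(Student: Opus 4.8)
The plan is to prove both the orthonormality and the two Krylov‑subspace identities by a single induction that advances the coupled sequences $\{u_i\}$ and $\{z_i\}$ in lockstep. The workhorse is the adjoint identity $\langle Az,u\rangle_2 = \langle z, C_{rkhs}^{\dagger}A^\top u\rangle_{C_{rkhs}}$, which I claim holds for every $z\in\mathcal{R}(C_{rkhs})$. Indeed, since $C_{rkhs}$ is symmetric, $C_{rkhs}C_{rkhs}^{\dagger}=C_{rkhs}^{\dagger}C_{rkhs}=P_{\mathcal{R}(C_{rkhs})}$, so for $z\in\mathcal{R}(C_{rkhs})$ one has $z^\top C_{rkhs}\,C_{rkhs}^{\dagger}A^\top u = z^\top A^\top u = \langle Az,u\rangle_2$, which is exactly $\langle z, C_{rkhs}^{\dagger}A^\top u\rangle_{C_{rkhs}}$ by \cref{eq:A*}. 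By \Cref{prop:z_in_RKHS} every $z_i$ lies in $\mathcal{R}(C_{rkhs})$; hence $\langle\cdot,\cdot\rangle_{C_{rkhs}}$ restricts to a genuine inner product on $\mathrm{span}\{z_i\}$ and the adjoint identity is available throughout the argument, even in the positive‑semidefinite case.

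First I would record the two rearranged recursions obtained from \cref{GKB2} and \cref{GKB4m}, namely $C_{rkhs}^{\dagger}A^\top u_j = \alpha_j z_j + \beta_j z_{j-1}$ and $A z_j = \beta_{j+1}u_{j+1}+\alpha_j u_j$ (with $z_0=\boldsymbol 0$). The normalizations built into gGKB give $\langle u_i,u_i\rangle_2 = \langle z_i,z_i\rangle_{C_{rkhs}} = 1$ at once. For the off‑diagonal terms I would assume as inductive hypothesis that $\{u_1,\dots,u_i\}$ and $\{z_1,\dots,z_i\}$ are orthonormal in their respective inner products, then compute $\beta_{i+1}\langle u_{i+1},u_j\rangle_2$ from \cref{GKB2}, replace $\langle Az_i,u_j\rangle_2$ by $\langle z_i,\alpha_j z_j+\beta_j z_{j-1}\rangle_{C_{rkhs}}$ using the adjoint identity, and observe that every surviving term cancels for $j\le i$ (for $j=i$ the remaining $\alpha_i$ is cancelled exactly by the $-\alpha_i u_i$ term). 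The mirror computation for $\alpha_{i+1}\langle z_{i+1},z_j\rangle_{C_{rkhs}}$ from \cref{GKB4m}, via $\langle C_{rkhs}^{\dagger}A^\top u_{i+1},z_j\rangle_{C_{rkhs}} = \langle u_{i+1}, Az_j\rangle_2 = \langle u_{i+1},\beta_{j+1}u_{j+1}+\alpha_j u_j\rangle_2$, yields $\langle z_{i+1},z_j\rangle_{C_{rkhs}}=0$ for all $j\le i$. Here the hypothesis $k<k_t$ is used to guarantee $\alpha_{i+1},\beta_{i+1}>0$, so the divisions are legitimate and the relations are non‑vacuous.

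For the Krylov identities I would abbreviate $M_u:=AC_{rkhs}^{\dagger}A^\top$ and $M_z:=C_{rkhs}^{\dagger}A^\top A$, and establish the transfer rules $C_{rkhs}^{\dagger}A^\top M_u^{\,j}b = M_z^{\,j}\,(C_{rkhs}^{\dagger}A^\top b)$ and $A\,M_z^{\,j}(C_{rkhs}^{\dagger}A^\top b) = M_u^{\,j+1}b$, both of which are pure associativity consequences of $C_{rkhs}^{\dagger}A^\top A = M_z$ and $AC_{rkhs}^{\dagger}A^\top = M_u$. Since $u_1\propto b$ and $z_1\propto C_{rkhs}^{\dagger}A^\top b$, an induction using \cref{GKB2} and \cref{GKB4m} together with these transfer rules shows $u_{i+1}\in\mathcal{K}_{i+1}(M_u,b)$ and $z_{i+1}\in\mathcal{K}_{i+1}(M_z,C_{rkhs}^{\dagger}A^\top b)$ at every step $\le k_t$. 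Combined with the orthonormality just proved — which forces each new vector to be linearly independent of its predecessors, hence to raise the dimension by one — the spans must coincide with the full Krylov subspaces, giving the claimed basis of \cref{space_z} and of \cref{space_u}.

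I expect the main obstacle to be the bookkeeping of the coupling rather than any single computation: every statement about $u_{i+1}$ depends on facts about $z_i$ and conversely, so the induction must carry both orthonormality and Krylov membership for the two sequences simultaneously, and at each use of the adjoint identity one must know the relevant $z$ lies in $\mathcal{R}(C_{rkhs})$. The genuinely delicate point is that $\langle\cdot,\cdot\rangle_{C_{rkhs}}$ is only a semi‑inner product on all of $\R^n$; confining the entire argument to $\mathcal{R}(C_{rkhs})$, where it is positive definite by virtue of \Cref{prop:z_in_RKHS}, is precisely what makes the orthonormality meaningful and the cancellations valid in the semidefinite regime.
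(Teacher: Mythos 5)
Your proof is correct, but it follows a genuinely different route from the paper's. The paper factorizes $C_{rkhs}^{\dag}=W_rW_r^{\top}$ with $W_r=V_r\Lambda_r^{1/2}$ (from \Cref{thm:RKHS_norm}), writes $z_i=W_rv_i$ using \Cref{prop:z_in_RKHS}, and observes that the recursions for $(u_i,v_i)$ are exactly the \emph{standard} GKB of the matrix $AW_r$ with starting vector $b$ between Euclidean spaces; orthonormality and the Krylov identities then follow by citation to the classical result, and translating back through the $C_{rkhs}$-orthonormal map $W_r$ gives the statement. You instead prove everything from first principles: the adjoint identity $\langle Az,u\rangle_2=\langle z,C_{rkhs}^{\dag}A^{\top}u\rangle_{C_{rkhs}}$ on $\mathcal{R}(C_{rkhs})$, a Lanczos-style coupled induction for orthogonality, and the transfer rules for Krylov membership, with the dimension count closing the argument. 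Both are sound. The paper's reduction is shorter and reuses the factorization $W_r$ in the subsequent propositions (termination count, naive solution), so it pays for itself structurally; your argument is self-contained, avoids invoking the generalized eigendecomposition entirely (it needs only that $C_{rkhs}$ is symmetric positive semidefinite with $C_{rkhs}C_{rkhs}^{\dag}=P_{\mathcal{R}(C_{rkhs})}$, plus \Cref{prop:z_in_RKHS}), and makes explicit \emph{why} the semidefiniteness is harmless — namely that the whole computation lives in $\mathcal{R}(C_{rkhs})$, where the semi-inner product is definite. That last observation is only implicit in the paper's change of variables.
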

\begin{proof}
Note from \Cref{thm:RKHS_norm} that $ C_{rkhs}^{\dag}=V\Lambda V^{\top}$. Let $W_r=V_{r}\Lambda_{r}^{1/2}$. Then $W_{r}^{\top} C_{rkhs}W_{r}=I_r$, $ C_{rkhs}^{\dag}=W_{r}W_{r}^{\top}$, and $\mathcal{R}(W_{r})=\mathcal{R}( C_{rkhs})$. For any $z_i$, \Cref{prop:z_in_RKHS} implies that there exists $v_i\in\mathbb{R}^{r}$ such that $z_i=W_{r}v_i$. We get from \cref{GKB2} and \cref{GKB4m} that
	\begin{align*}
		& \beta_{i+1}u_{i+1} = AW_{r}v_{i}-\alpha_{i}u_{i} , \\
		& \alpha_{i+1}v_{i+1} = W_{r}^{\top}A^{\top}u_{i+1}-\beta_{i+1}v_{i} , 
	\end{align*}
    where the second equation comes from $\alpha_{i+1}W_{r}v_{i+1} = W_{r}W_{r}^{\top}A^{\top}u_{i+1}-\beta_{i+1}W_{r}v_{i}$. Combining the above two relations with \cref{GKB1}, we conclude that the iterative process for generating $u_i$ and $v_i$ is the standard GKB process of $AW_r$ with starting vector $\obs$ between the two finite dimensional Hilbert spaces $(\mathbb{R}^{r},\langle\cdot,\cdot\rangle_{2})$ and $(\mathbb{R}^{m},\langle\cdot,\cdot\rangle_{2})$. Therefore, $\{u_i\}_{i=1}^{k+1}$ and $\{v_i\}_{i=1}^{k+1}$ are two 2-orthonormal bases of the Krylov subspaces 
	\begin{align*}
		& \mathcal{K}_{k+1}(AW_{r}(AW_{r})^{\top}, b) = \mathrm{span}\{(AW_{r}W_{r}^{\top}A^{\top})^{i}b\}_{i=0}^{k},  \\
		& \mathcal{K}_{k+1}((AW_{r})^{\top}AW_{r}, (AW_{r})^{\top}b) = \mathrm{span}\{(W_{r}^{\top}A^{\top}AW_{r})^{i}W_{r}^{\top}A^{\top}b\}_{i=0}^{k},
	\end{align*}
	respectively; see e.g. \cite[\S 10.4]{Golub2013}. Then, $W_{r}W_{r}^{\top}= C_{rkhs}^{\dag}$ implies \cref{space_u}. Also, $\{z_i\}_{i=1}^{k+1}=\{W_{r}v_i\}_{i=1}^{k+1}$ is a $ C_{rkhs}$-orthonormal basis of $W_{r}\mathcal{K}_{k+1}((AW_{r})^{\top}AW_{r}, (AW_{r})^{\top}b)$ since $W_r$ is $ C_{rkhs}$-orthonormal. 
	
	Finally, $\{z_i\}_{i=1}^{k+1}$ are $C_{rkhs}$ orthogonal by construction, and by using the relation
	\begin{align*}
		W_{r}(W_{r}^{\top}A^{\top}AW_{r})^{i}W_{r}^{\top}A^{\top}b
		= (W_{r}W_{r}^{\top}A^{\top}A)^{i}W_{r}W_{r}^{\top}A^{\top}b
		= ( C_{rkhs}^{\dag}A^{\top}A)^{i} C_{rkhs}^{\dag}A^{\top}b, 
	\end{align*}
we get that $\{z_i\}_{i=1}^{k+1}$ in the RKHS-restricted Krylov subspace in \cref{space_z}. 
\end{proof}

\begin{proposition}[gGKB termination number]
\label{prop:gGKB_iter_num}
 Suppose the gGKB in {\rm\Cref{alg:gGKB}} terminates at step $k_{t} = \max_{k\geq 1}\{\alpha_{k}\beta_{k}>0\}$. Let the distinct nonzero eigenvalues of $AC_{rkhs}^{\dag}A^{\top}$ be $\mu_{1}>\cdots\mu_{s}>0$ with multiplicities $m_1,\dots,m_s$, and the corresponding eigenspaces are $\mathcal{G}_1,\cdots,\mathcal{G}_s$. Then, $k_{t}=q$, where $q$ is the number of nonzero elements in $\{P_{\mathcal{G}_1}b,\dots,P_{\mathcal{G}_s}b\}$, and 
\begin{equation}\label{eq:dim_Krylov}
	q= \mathrm{dim}(\mathcal{K}_{\infty}(A C_{rkhs}^{\dag}A^{\top}, b) ) = \mathrm{dim}( \mathcal{K}_{\infty}( C_{rkhs}^{\dag}A^{\top}A,  C_{rkhs}^{\dag}A^{\top}b) ), 
\end{equation}
where $K_\infty(M,v) = \mathrm{span}\{M^iv\}_{i=0}^\infty$ denotes the Krylov subspace of $\{M,v\}$. Moreover, $\sum_{i=1}^{s}m_i=r$ with $r$ being the rank of $A$ and $k_t=q\leq r$.   
 \end{proposition}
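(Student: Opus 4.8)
The plan is to reduce the gGKB to a standard Golub--Kahan bidiagonalization (GKB) and then count the termination step by a spectral argument. By \Cref{prop:onb-GKB}, writing $z_i = W_r v_i$ with $W_r = V_r\Lambda_r^{1/2}$ (so that $W_r^\top C_{rkhs} W_r = I_r$, $C_{rkhs}^\dagger = W_r W_r^\top$, and $\mathcal{R}(W_r)=\mathcal{R}(C_{rkhs})$), the recursions generating $\{u_i\}$ and $\{v_i\}$ are \emph{exactly} the standard GKB of the reduced matrix $\widetilde A := AW_r\in\R^{m\times r}$ with starting vector $b$, acting between $(\R^r,\langle\cdot,\cdot\rangle_2)$ and $(\R^m,\langle\cdot,\cdot\rangle_2)$. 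Hence classical termination theory applies: the process stops exactly when a Krylov subspace is exhausted, and the number of steps with $\alpha_k\beta_k>0$ equals the dimension of the $v$-side Krylov subspace $\mathcal{K}_\infty(\widetilde A^\top\widetilde A,\widetilde A^\top b)$; see \cite[\S10.4]{Golub2013}. So the core task is to evaluate this dimension.

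Next I would compute $\dim\mathcal{K}_\infty(\widetilde A^\top\widetilde A,\widetilde A^\top b)$ by relating the two Gram matrices $M := \widetilde A\widetilde A^\top = AC_{rkhs}^\dagger A^\top$ and $N := \widetilde A^\top\widetilde A$. These share the nonzero eigenvalues $\mu_1>\cdots>\mu_s>0$, and $\widetilde A^\top$ maps each eigenspace $\mathcal{G}_i$ of $M$ isomorphically onto the $\mu_i$-eigenspace of $N$, since $N\widetilde A^\top g=\widetilde A^\top M g=\mu_i\widetilde A^\top g$ for $g\in\mathcal{G}_i$ and $\widetilde A^\top$ is injective on $\mathcal{R}(M)=\mathcal{R}(\widetilde A)$. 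Consequently the $\mu_i$-component of $\widetilde A^\top b$ is nonzero if and only if $P_{\mathcal{G}_i}b\neq\mathbf{0}$. Crucially, $\widetilde A^\top b\in\mathcal{R}(N)$ carries \emph{no} null-space component, so its expansion in the eigenbasis of $N$ involves exactly $q$ distinct nonzero eigenvalues. A standard minimal-polynomial (Vandermonde) argument then gives $\dim\mathcal{K}_\infty(N,\widetilde A^\top b)=q$, and therefore $k_t=q$.

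The two displayed Krylov identities and the multiplicity count then follow. Applying $W_r$ to $\mathcal{K}_\infty(N,\widetilde A^\top b)$ and invoking the identity $W_r(W_r^\top A^\top A W_r)^i W_r^\top A^\top b=(C_{rkhs}^\dagger A^\top A)^i C_{rkhs}^\dagger A^\top b$ from \Cref{prop:onb-GKB}, together with the injectivity of $W_r$, transfers the dimension to $\dim\mathcal{K}_\infty(C_{rkhs}^\dagger A^\top A,C_{rkhs}^\dagger A^\top b)=q$. For the multiplicities, $\sum_{i=1}^s m_i=\mathrm{rank}(M)=\mathrm{rank}(\widetilde A)$; since $\mathcal{R}(\widetilde A)=\mathcal{R}(AW_r)\subseteq\mathcal{R}(A)$ with both of dimension $r$, we get $\mathcal{R}(M)=\mathcal{R}(A)$ and $\sum_i m_i=r$. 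Finally $q\leq s\leq\sum_i m_i=r$ yields $k_t=q\leq r$.

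The main obstacle is the bookkeeping at termination, concentrated in the first claimed identity $\dim\mathcal{K}_\infty(AC_{rkhs}^\dagger A^\top,b)=q$. Because $b=Ax+\bw$ generically has a component $b_{\mathcal N}$ orthogonal to $\mathcal{R}(A)=\mathcal{R}(M)$, the $u$-side space $\mathcal{K}_\infty(M,b)$ can gain one extra dimension from $b_{\mathcal N}$, and the gGKB may in fact produce a final vector $u_{k_t+1}$ with $\beta_{k_t+1}>0$. The delicate step is to check that this trailing vector forces $\alpha_{k_t+1}=0$, so that it does \emph{not} enlarge the count $k_t=\max\{k:\alpha_k\beta_k>0\}$. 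I would resolve this by anchoring the entire count on the $v$-side space, where $\widetilde A^\top b\in\mathcal{R}(N)$ automatically kills the null contribution; the $u$-side identity is then read off on $\mathcal{R}(A)$ (equivalently, on the range part of $b$), which is precisely where the accounting must be done with care.
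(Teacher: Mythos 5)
Your proposal is correct and follows the same backbone as the paper's proof: reduce gGKB to a standard GKB of the matrix $AW_r$ with $W_r=V_r\Lambda_r^{1/2}$ (exactly the device used in \Cref{prop:onb-GKB}), count the surviving spectral components of $b$ via a Vandermonde argument, and get $\sum_i m_i=r$ from the full column rank of $AV_r$. The two genuine differences are these. First, where you invoke classical GKB termination theory to equate $k_t$ with the dimension of the $v$-side Krylov subspace $\mathcal{K}_\infty(\widetilde A^\top\widetilde A,\widetilde A^\top b)$, the paper proves this from scratch: it derives the three-term recurrence $\alpha_{i+1}\beta_{i+1}z_{i+1}=C_{rkhs}^{\dag}A^{\top}Az_{i}-(\alpha_{i}^{2}+\beta_{i+1}^{2})z_{i}-\alpha_{i}\beta_{i}z_{i-1}$, expands $z_{k_t+1}$ in powers of $C_{rkhs}^{\dag}A^{\top}A$ applied to $C_{rkhs}^{\dag}A^{\top}b$ with a nonzero leading coefficient, and shows that $\alpha_{k_t+1}\beta_{k_t+1}=0$ with $k_t<q$ would contradict the linear independence established by the Vandermonde step. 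Your route is shorter but less self-contained; the paper's is longer but makes the termination mechanism explicit. Second, and more interestingly, you anchor the dimension count on the $v$-side, observing that $\widetilde A^\top b$ has no null-space component, whereas the paper counts on the $u$-side via $w_i=(AC_{rkhs}^{\dag}A^{\top})^{i-1}b=\sum_{j=1}^{q}\mu_j^{i-1}\|P_{\mathcal{G}_j}b\|_2 g_j$. That identity fails for $i=1$ unless $b\in\mathcal{R}(AC_{rkhs}^{\dag}A^{\top})=\mathcal{R}(A)$: the $i=1$ term is $b$ itself, not its projection onto $\bigoplus_j\mathcal{G}_j$. Since $b=Ax+\bw$ generically has a component orthogonal to $\mathcal{R}(A)$, the first equality in \cref{eq:dim_Krylov} should read $\dim\mathcal{K}_\infty(AC_{rkhs}^{\dag}A^{\top},b)=q+1$ in that case (or $b$ should be replaced by its projection). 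You have correctly identified this off-by-one subtlety, and your resolution --- basing both $k_t=q$ and the second Krylov identity on the $v$-side, where $A^\top$ annihilates the offending component --- is the right fix; the conclusion $k_t=q\le r$ is unaffected.
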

\begin{proof} 
First, we prove \cref{eq:dim_Krylov} with $q$ being the number of nonzero elements in $\{P_{\mathcal{G}_1}b,\dots,P_{\mathcal{G}_s}b\}$. 
Let $g_j=P_{\mathcal{G}_j}b/\|P_{\mathcal{G}_j}b\|_{2}$ for $1\leq j\leq s$, and let $g_1,\dots,g_q\neq \mathbf{0}$ without loss of generality. Note that $\{g_j\}_{j=1}^q$ are orthonormal. Let  $G_j$ be a matrix with orthonormal columns that span $\mathcal{G}_j$. Note that $P_{\mathcal{G}_j}=G_{j}G_{j}^{\top}$.  
By the eigenvalue decomposition $AC_{rkhs}^{\dag}A^{\top}=\sum_{j=1}^{s} \mu_{j}G_{j}G_{j}^{\top}$, we have
\begin{equation}\label{eq:wi}
    w_i := (AC_{rkhs}^{\dag}A^{\top})^{i-1}b = \sum_{j=1}^{s}\mu_{j}^{i-1}G_{j}G_{j}^{\top}b= \sum_{j=1}^{q}\mu_{j}^{i-1}\|P_{\mathcal{G}_j}b\|_2 g_{j}. 
\end{equation}
 Hence, $ \mathrm{rank}\{w_i\}_{i=1}^{\infty}\leq q$. On the other hand, for $1\leq k\leq q$, setting $\bar{w}_i=\|P_{\mathcal{G}_j}b\|_2 g_{j}$, we have $(w_1\dots,w_k)=(\bar{w}_1,\dots,\bar{w}_q)T_k$ with 
\begin{equation*}
    T_k = \begin{pmatrix}
        1 & \mu_{1} & \cdots & \mu_{1}^{k-1} \\
        1 & \mu_{2} & \cdots & \mu_{2}^{k-1} \\
        \vdots & \vdots & \cdots & \vdots \\
        1 & \mu_{q} & \cdots & \mu_{q}^{k-1}
          \end{pmatrix} 
    	= \begin{pmatrix}
 	        T_{k1} \\
 	        T_{k2}
	 	  \end{pmatrix},
\end{equation*}
where $T_{k1}\in \R^{k\times k}$ consists of the first $k$ rows of $T_k$, and $T_{k2}\in \R^{k,q-k}$ consists of the rest rows.   
Note that  $T_{k1}$ is a Vandermonde matrix and it is nonsingular since $\mu_{i}\neq\mu_{j}$ for $1\leq i\neq j\leq k$. Then, $T_k$ has full column rank, thereby the rank of $\{w_i\}_{i=1}^{k}$ is $k$ for $1\leq k\leq q$, and  $ \mathrm{rank}\{w_i\}_{i=1}^{\infty} \geq \mathrm{rank}\{w_i\}_{i=1}^{q} = q$. Therefore, we have $\mathrm{dim}(\mathcal{K}_{\infty}(A C_{rkhs}^{\dag}A^{\top}, b) ) =  \mathrm{rank}\{w_i\}_{i=1}^{\infty}=q$. 

Also, we have $\mathrm{dim}( \mathcal{K}_{\infty}( C_{rkhs}^{\dag}A^{\top}A,  C_{rkhs}^{\dag}A^{\top}b) )   = \mathrm{rank}\{C_{rkhs}^{\dag}A^{\top}w_i\}_{i=1}^{\infty}  =q$, where the first equality follows from 
\begin{equation}\label{eq:Krylov_wi}
    (C_{rkhs}^{\dag}A^{\top}A)^{i-1} C_{rkhs}^{\dag}A^{\top}b= C_{rkhs}^{\dag}A^{\top}(A C_{rkhs}^{\dag}A^{\top})^{i-1} b = C_{rkhs}^{\dag}A^{\top}w_i, 
\end{equation} 
and the second equality follows from $\mathrm{rank}\{C_{rkhs}^{\dag}A^{\top}w_i\}_{i=1}^{\infty} =  \mathrm{rank}\{w_i\}_{i=1}^{\infty} =q$ since $C_{rkhs}^{\dag}A^{\top}$ is non-singular on $\mathrm{span}\{w_i\}_{i=1}^{\infty}$, which is a subset of $\mathrm{span}\{\mathcal{G}_l\}_{l=1}^s$ by \cref{eq:wi}. In fact, $C_{rkhs}^{\dag}A^{\top}$ is non-singular on $\mathrm{span}\{\mathcal{G}_l\}_{l=1}^s$ because $\{\mathcal{G}_l\}$ are eigenspaces of $AC_{rkhs}^{\dag}A^{\top}$ corresponding to the positive eiengvalues. 

Furthermore, \cref{eq:Krylov_wi} and the non-degeneracy of  $C_{rkhs}^{\dag}A^{\top}$ on $\mathrm{span}\{w_i\}_{i=1}^{\infty}$ imply that
\begin{equation}\label{eq:Krylov_q}
\begin{aligned}
\mathrm{dim} (\mathcal{K}_{q}( C_{rkhs}^{\dag}A^{\top}A,  C_{rkhs}^{\dag}A^{\top}b) ) = & \mathrm{rank}(\{ C_{rkhs}^{\dag}A^{\top}A)^{i-1} C_{rkhs}^{\dag}A^{\top}b \}_{i=0}^{q-1} \\
 = & \mathrm{rank}(\{	C_{rkhs}^{\dag}A^{\top}w_i \}_{i=1}^{q} = \mathrm{rank}\{w_i\}_{i=1}^{q} =q. 
\end{aligned} 	
\end{equation}
That is, the vectors $\{ C_{rkhs}^{\dag}A^{\top}A)^{i-1} C_{rkhs}^{\dag}A^{\top}b \}_{i=0}^{q-1}$ are linearly independent. 

Next, we prove that $k_{t}=q$.
Clearly, $k_{t}\leq q$ since by \Cref{prop:onb-GKB} $\{z_i\}_{i=1}^{k_t}$ are orthogonal and they are in $\mathcal{K}_{k_t}( C_{rkhs}^{\dag}A^{\top}A,  C_{rkhs}^{\dag}A^{\top}b) \subset \mathcal{K}_{\infty}( C_{rkhs}^{\dag}A^{\top}A,  C_{rkhs}^{\dag}A^{\top}b)$, whose dimension is $q$. On the other hand, we show next that if $k_t<q$, there will be a contradiction; hence, we must have $k_t=q$. In fact, \cref{GKB2} and \cref{GKB4m} imply that, for each $1\leq i\leq k_t$, 
\begin{align*}
    C_{rkhs}^{\dag}A^{\top}Az_{i} 
    &= \alpha_{i}C_{rkhs}^{\dag}A^{\top}u_{i} + \beta_{i+1}C_{rkhs}^{\dag}A^{\top}u_{i+1} \\
    &= \alpha_{i}(\alpha_{i}z_{i}+\beta_{i}z_{i+1}) + \beta_{i+1}(\alpha_{i+1}z_{i+1}+\beta_{i+1}z_{i}),
\end{align*}
which leads to 
\begin{equation*}
    \alpha_{i+1}\beta_{i+1}z_{i+1} = C_{rkhs}^{\dag}A^{\top}Az_{i}-(\alpha_{i}^{2}+\beta_{i+1}^{2})z_{i}-\alpha_{i}\beta_{i}z_{i-1}.
\end{equation*}
Note that $\alpha_{1}\beta_{1}z_{1}=C_{rkhs}^{\dag}A^{\top}\beta_{1}u_{1}=C_{rkhs}^{\dag}A^{\top}b $.
Combining the above two relations and using  $\alpha_k\beta_k>0$ for all $k\leq k_t$, it follows that $z_{k} \in \mathrm{span}\{( C_{rkhs}^{\dag}A^{\top}A)^{i} C_{rkhs}^{\dag}A^{\top}b\}_{i=0}^{k}$ for all $k\leq k_t$. Hence, recursively applying $z_i = \frac{1}{\alpha_i\beta_i}  C_{rkhs}^{\dag}A^{\top}Az_{i-1} - \frac{1}{\alpha_i\beta_i} (\alpha_{i-1}^2+\beta_i^2) z_{i-1}- \frac{\alpha_{i-1}\beta_{i-1}}{\alpha_i\beta_i}z_{i-2}$ for all $2\leq i\leq k_t$, we can write 
\[\alpha_{k_t+1}\beta_{k_t+1}z_{k_t+1}=\sum_{i=0}^{k_t}\xi_i( C_{rkhs}^{\dag}A^{\top}A)^{i} C_{rkhs}^{\dag}A^{\top}b,\]
with $\xi_i\in \R$ and in particular, $\xi_{k_t}=1/\Pi_{i=1}^{k_t}\alpha_i\beta_i\neq 0$. Now $\alpha_{k_t+1}\beta_{k_t+1}=0$ implies that $\{( C_{rkhs}^{\dag}A^{\top}A)^{i} C_{rkhs}^{\dag}A^{\top}b\}_{i=0}^{k_t}$ are linearly dependent, contradicting with the fact that they are linearly independent as suggested by \cref{eq:Krylov_q}. Therefore, we have $q= k_t$.

  Lastly, to prove that $\sum_{i=1}^{s}m_i=r$, it suffices to show that $\mathrm{rank}(AC_{rkhs}^{\dag}A^{\top})=r$ since the eigenvalues of $AC_{rkhs}^{\dag}A^{\top}$ are nonnegative. To see that its rank is $r$, following the proof of \Cref{prop:onb-GKB}, we write $AC_{rkhs}^{\dag}A^{\top}=AW_{r}W_{r}^{\top}A^{\top}$. Since $AW_{r}=AV_{r}\Lambda_{r}^{1/2}$, we only need to prove that $AV_{r}$ has full column rank. Suppose $AV_{r}y=\mathbf{0}$ with $y\in\mathbb{R}^{r}$. By \Cref{thm:RKHS_norm} it follows $A^{\top}AV_{r}y=\mathbf{0}\Leftrightarrow BV_{r}\Lambda_{r}y=\mathbf{0}\Leftrightarrow y=\mathbf{0}$. Thus, $\mathrm{rank}(AC_{rkhs}^{\dag}A^{\top})=\mathrm{rank}(AW_{r})=r$.
\end{proof}

\subsection{Uniqueness of solutions in the iterations}

\begin{proposition}\label{naive}
If gGKB terminates at step $k_t$ in \cref{eq:gGK_niter}, the iterative solution $x_{k_t}$ is the unique least squares solution to $\min_{x\in\mathcal{R}( C_{rkhs})}\|Ax-b\|_2$.
\end{proposition}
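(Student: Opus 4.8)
The plan is to establish two separate facts: that $x_{k_t}$ attains the \emph{global} minimum of $\|Ax-b\|_2$ over all of $\mathcal{R}(C_{rkhs})$, and that this minimizer is unique. Write $M:=AC_{rkhs}^{\dag}A^{\top}$ and recall from \Cref{prop:gGKB_iter_num} its spectral data: distinct positive eigenvalues $\mu_1>\cdots>\mu_s$ with mutually orthogonal eigenspaces $\mathcal{G}_j$, the normalized projections $g_j=P_{\mathcal{G}_j}b/\|P_{\mathcal{G}_j}b\|_2$ (of which exactly $q=k_t$ are nonzero, say the orthonormal $g_1,\dots,g_q$), and the factorization $M=AW_rW_r^\top A^\top$ with $AW_r$ of full column rank $r$. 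The last point gives $\mathcal{R}(M)=A\mathcal{R}(C_{rkhs})$, a subspace of dimension $r$.

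By \cref{eq:solu_in_Rk} the vector $x_{k_t}$ minimizes $\|Ax-b\|_2$ over $\mathcal{S}_{k_t}\subseteq\mathcal{R}(C_{rkhs})$, so $Ax_{k_t}=P_{A\mathcal{S}_{k_t}}b$. The crux is to identify $A\mathcal{S}_{k_t}$ and the projection of $b$ onto it. Applying $A$ to the basis of $\mathcal{S}_{k_t}$ and using \cref{eq:Krylov_wi} gives $A(C_{rkhs}^{\dag}A^{\top}A)^{i}C_{rkhs}^{\dag}A^{\top}b=M^{i+1}b$, hence $A\mathcal{S}_{k_t}=\mathrm{span}\{M^j b\}_{j=1}^{k_t}$. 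Expanding $M^j b=\sum_{l=1}^q\mu_l^{j}\|P_{\mathcal{G}_l}b\|_2\,g_l$ as in \cref{eq:wi} and invoking the nonsingularity of the associated $q\times q$ Vandermonde-type matrix (the $\mu_l$ being distinct and positive), I obtain $A\mathcal{S}_{k_t}=\mathrm{span}\{g_l\}_{l=1}^q$. Consequently $P_{A\mathcal{S}_{k_t}}b=\sum_{l=1}^q\langle b,g_l\rangle_2\,g_l=\sum_{l=1}^q\|P_{\mathcal{G}_l}b\|_2\,g_l=P_{\mathcal{R}(M)}b=P_{A\mathcal{R}(C_{rkhs})}b$, where the middle equalities use $\langle b,g_l\rangle_2=\|P_{\mathcal{G}_l}b\|_2$ and $\mathcal{R}(M)=\bigoplus_{j}\mathcal{G}_j$ with $P_{\mathcal{G}_j}b=\mathbf{0}$ for $j>q$. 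Therefore $Ax_{k_t}=P_{A\mathcal{R}(C_{rkhs})}b$, which is precisely the normal-equation characterization of a least squares solution of $\min_{x\in\mathcal{R}(C_{rkhs})}\|Ax-b\|_2$; since $x_{k_t}\in\mathcal{R}(C_{rkhs})$, it is such a solution.

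For uniqueness, observe that the set of minimizers over $\mathcal{R}(C_{rkhs})$ is the coset $x_{k_t}+\big(\mathcal{N}(A)\cap\mathcal{R}(C_{rkhs})\big)$. The proof of \Cref{prop:gGKB_iter_num} already shows $AV_r$ has full column rank, i.e.\ $A$ is injective on $\mathcal{R}(V_r)=\mathcal{R}(C_{rkhs})$; hence $\mathcal{N}(A)\cap\mathcal{R}(C_{rkhs})=\{\mathbf{0}\}$ and $x_{k_t}$ is the \emph{unique} least squares solution.

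The main obstacle — and the reason the claim is not immediate — is that $A\mathcal{S}_{k_t}$ is in general a \emph{proper} subspace of $A\mathcal{R}(C_{rkhs})$: its dimension is $q=k_t$, which can be strictly smaller than $r=\dim A\mathcal{R}(C_{rkhs})$ whenever the eigenspaces $\mathcal{G}_j$ are not one-dimensional. One therefore cannot argue global optimality by claiming that the two images coincide. What makes the argument work is that the component of $b$ in each eigenspace $\mathcal{G}_j$ is the single direction $g_j$ that the Krylov space already captures, so $P_{A\mathcal{R}(C_{rkhs})}b$ in fact lies inside the smaller $A\mathcal{S}_{k_t}$; pinning this down is exactly where the spectral decomposition and the Vandermonde nonsingularity from \Cref{prop:gGKB_iter_num} are indispensable.
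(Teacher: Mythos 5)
Your proof is correct, but it takes a genuinely different route from the paper's. The paper argues algebraically: it characterizes the minimizer over $\mathcal{R}(C_{rkhs})$ by the projected normal equation $P_{\mathcal{R}(C_{rkhs})}A^{\top}(Ax-b)=\mathbf{0}$, then verifies this identity at $x_{k_t}$ in two lines by combining the matrix-form recursion \cref{GKB8} with the fact that $y_q$ solves $B_q^{\top}B_q y_q = \beta_1 B_q^{\top}e_1$, so that everything collapses to a single term $\alpha_{q+1}\beta_{q+1}C_{rkhs}z_{q+1}e_q^{\top}y_q$, which vanishes at termination. You instead argue geometrically/spectrally: you identify $A\mathcal{S}_{k_t}=\mathrm{span}\{g_l\}_{l=1}^{q}$ via the Vandermonde nonsingularity from \Cref{prop:gGKB_iter_num}, and then observe that even though $A\mathcal{S}_{k_t}$ may be a proper subspace of $A\mathcal{R}(C_{rkhs})$ (dimension $q$ versus $r$), the projection $P_{A\mathcal{R}(C_{rkhs})}b$ already lies in it, so the restricted and global projections of $b$ coincide. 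Your closing remark pinpointing this as the crux is a genuine insight that the paper's computation obscures. The trade-off: the paper's proof is shorter and needs only the bidiagonal recursion plus the termination condition, while yours reuses the heavier spectral machinery of \Cref{prop:gGKB_iter_num} but explains \emph{why} termination at $k_t=q<r$ nevertheless delivers the global least squares solution. Both proofs obtain uniqueness identically, from $AV_r$ (equivalently $AW_r$) having full column rank, i.e.\ injectivity of $A$ on $\mathcal{R}(C_{rkhs})$. One small caution: the identity $M^{j}b=\sum_{l=1}^{q}\mu_l^{j}\|P_{\mathcal{G}_l}b\|_2\,g_l$ is valid only for $j\geq 1$ (for $j=0$ one would need $b\in\mathcal{R}(M)$); you use it only for $j\geq 1$, so your argument is safe, but it is worth being explicit about this restriction.
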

\begin{proof}
Following the proof of \Cref{prop:onb-GKB}, the solution to $\min_{x\in\mathcal{R}( C_{rkhs})}\|Ax-b\|_2$ is $x_{\star}=W_{r}y_{\star}$ with $y_{\star}=\argmin_{y}\|AW_{r}y-b\|_{2}$. Since $AW_{r}$ has full column rank, it follows that $y_{\star}$ is the unique solution to $W_{r}^{\top}A^{\top}(AW_{r}y-b)=\mathbf{0}$. Note that $\mathcal{R}(W_{r})=\mathcal{R}( C_{rkhs})$. Thus, $x_{\star}$ is the solution to $\min_{x\in\mathcal{R}( C_{rkhs})}\|Ax-b\|_2$ if and only if $P_{\mathcal{R}( C_{rkhs})}A^{\top}(Ax_{\star}-b)=\mathbf{0}$.

Now we only need to prove $P_{\mathcal{R}( C_{rkhs})}A^{\top}(Ax_{q}-b)=\mathbf{0}$ since $k_t= q$ by \Cref{prop:gGKB_iter_num}. Using the property $P_{\mathcal{R}( C_{rkhs})}=C_{rkhs}C_{rkhs}^{\dag}$, we get from \cref{GKB8} 
\begin{equation*}
    P_{\mathcal{R}( C_{rkhs})}A^{\top}U_{k+1} = C_{rkhs}(Z_kB_{k}^{T}+\alpha_{k+1}z_{k+1}e_{k+1}^\top ).
\end{equation*}
Combining the above relation with \cref{eq:min-Sk}, we have
\begin{align*}
    P_{\mathcal{R}( C_{rkhs})}A^{\top}(Ax_{q}-b) 
    &= C_{rkhs}(Z_qB_{q}^{T}+\alpha_{q+1}z_{q+1}e_{q+1}^\top )(B_{q}y_{q}-\beta_{1}e_{1}) \\
    &= C_{rkhs}[Z_{q}(B_{q}^{\top}B_{q}y_{q}- B_{q}^{\top}\beta_{1}e_{1}) + \alpha_{q+1}\beta_{q+1}z_{q+1}e_{q}^{\top}y_{q}] \\
    &= \alpha_{q+1}\beta_{q+1}C_{rkhs}z_{q+1}e_{q}^{\top}y_{q} = \boldsymbol{0},
\end{align*}
since $\alpha_{q+1}\beta_{q+1} = 0$ when gGKB terminates.
\end{proof}

This result shows the necessity for early stopping the iteration to avoid getting a naive solution. The next theorem shows the uniqueness of the solution in each iteration of the algorithm. 

\begin{theorem}[Uniquess of solution in each iteration]\label{thm:solu_uniqueness}
For each iteration with $k<k_t$, there exists a unique solution to \cref{subspace_regu}. Furthermore, there exists a unique solution to $ \min_{\vecphi\in\mathcal{S}_k}\|A\vecphi-\obs\|_{2}$. 
\end{theorem}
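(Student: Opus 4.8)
The plan is to reduce both assertions to a single injectivity statement: that the forward map $A$ is injective when restricted to the data-adaptive RKHS $\mathcal{R}(C_{rkhs})$. Since \Cref{prop:z_in_RKHS} guarantees $\mathcal{S}_k = \mathrm{span}\{z_1,\dots,z_k\} \subseteq \mathcal{R}(C_{rkhs})$, injectivity on $\mathcal{R}(C_{rkhs})$ immediately yields injectivity of $A$ on $\mathcal{S}_k$, and every other step is then a routine consequence. I would first dispatch the ``furthermore'' claim (uniqueness of the least squares minimizer) and afterwards deduce the uniqueness of the regularized problem \cref{subspace_regu}.

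First I would establish that $A$ is injective on $\mathcal{R}(C_{rkhs})$. Recall from \Cref{thm:RKHS_norm} that $\mathcal{R}(C_{rkhs}) = \mathcal{R}(V_r)$ with $r = \mathrm{rank}(A)$, so any $\vecphi \in \mathcal{R}(C_{rkhs})$ can be written as $\vecphi = V_r c$ for some $c \in \R^r$. If $A\vecphi = \mathbf{0}$, then $A^\top A V_r c = \mathbf{0}$; invoking the generalized eigenvalue identity $A^\top A V_r = B V_r \Lambda_r$ from \cref{eq:AbB1} gives $B V_r \Lambda_r c = \mathbf{0}$. Since $B$ is nonsingular, $V_r$ has full column rank, and $\Lambda_r$ is an invertible diagonal matrix of positive eigenvalues, this forces $c = \mathbf{0}$, i.e. $\vecphi = \mathbf{0}$. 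Hence $A$ is injective on $\mathcal{R}(C_{rkhs})$, and therefore on $\mathcal{S}_k$. This is precisely the full-column-rank property of $A V_r$ already exploited in the proof of \Cref{prop:gGKB_iter_num}, so the step should be short.

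With injectivity in hand, the least squares problem $\min_{\vecphi \in \mathcal{S}_k}\|A\vecphi - \obs\|_2$ has a unique minimizer: writing $\vecphi = Z_k y$, with $Z_k$ of full column rank for $k < k_t$ (its columns are $C_{rkhs}$-orthonormal by \Cref{prop:onb-GKB}), the normal equations read $(AZ_k)^\top (AZ_k) y = (AZ_k)^\top \obs$, and the Gram matrix $(AZ_k)^\top (AZ_k)$ is positive definite because $A Z_k$ has full column rank. This proves the ``furthermore'' statement. Consequently the constraint set $\mathcal{X}_k$ in \cref{subspace_regu} collapses to a single point, so the minimal-$C_{rkhs}$-norm selection is trivial and \cref{subspace_regu} inherits the same unique solution. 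For the more general assertion where only $\mathcal{S}_k \subseteq \mathcal{R}(C_{rkhs})$ is assumed, I would instead observe that $\mathcal{X}_k$ is a nonempty closed convex (affine) subset of the Hilbert space $(\mathcal{R}(C_{rkhs}), \langle\cdot,\cdot\rangle_{C_{rkhs}})$, on which $\|\cdot\|_{C_{rkhs}}$ is a genuine norm, so its minimal-norm element is unique by the projection theorem even when the least squares minimizer is not. I expect the only substantive step to be the injectivity of $A$ on $\mathcal{R}(C_{rkhs})$; once that is secured through \cref{eq:AbB1}, both uniqueness claims are standard linear-algebra and Hilbert-space facts.
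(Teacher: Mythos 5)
Your argument is correct, but it takes a different route from the paper's. You reduce everything to a single injectivity fact, namely that $A$ is injective on $\mathcal{R}(C_{rkhs})=\mathcal{R}(V_r)$, which you prove from the generalized eigenvalue identity $A^\top A V_r = B V_r\Lambda_r$ of \cref{eq:AbB1} --- exactly the computation the paper uses at the end of \Cref{prop:gGKB_iter_num} to show $AV_r$ has full column rank. From this, $AZ_k$ has full column rank, the least squares problem over $\mathcal{S}_k$ has a unique minimizer, the constraint set $\mathcal{X}_k$ is a singleton, and the minimal-$C_{rkhs}$-norm selection in \cref{subspace_regu} is vacuous. The paper instead treats the two claims separately: for \cref{subspace_regu} it invokes Eld\'en's theorem, which characterizes uniqueness of the minimal-seminorm least squares solution by the condition $\mathcal{N}(C_{rkhs}^{1/2}W_k)\cap\mathcal{N}(AW_k)=\{\boldsymbol{0}\}$, and verifies only the first factor, i.e.\ that $\|\cdot\|_{C_{rkhs}}$ is a genuine norm on $\mathcal{S}_k$; for the ``furthermore'' claim it argues that the difference $x_*$ of two minimizers lies in $\mathcal{N}(A^\top A)$ while $B^{-1}x_*\in\mathcal{N}(A^\top A)^{\perp}$, and concludes from the positive definiteness of $B$ --- which is the same injectivity statement you prove, in a different guise. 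Your route is more economical and yields the sharper conclusion that $\mathcal{X}_k$ is a single point, so the two assertions of the theorem coincide; the paper's Eld\'en-based route is the one that directly justifies the remark after \cref{eq:solu_in_Rk} that uniqueness of the minimal-norm solution holds for any $\mathcal{S}_k\subseteq\mathcal{R}(C_{rkhs})$ even if $A$ fails to be injective there --- a case you also cover, correctly, via the projection theorem in the Hilbert space $(\mathcal{R}(C_{rkhs}),\langle\cdot,\cdot\rangle_{C_{rkhs}})$.
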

\begin{proof}
	Let $W_k\in\mathbb{R}^{n\times k}$ that has orthonormal columns and spans $\mathcal{S}_k$. For any $\vecphi\in\mathcal{S}_k$, there is a unique $ y \in\mathbb{R}^k$ such that $\vecphi=W_k y $. 
Then the solution to \cref{subspace_regu} should be $\vecphi_k=W_k y _k$, where $ y _k$ is the solution to 
	\[\min_{ y \in\mathcal{Y}_k}\|W_k y \|_{ C_{rkhs}}, \ \ 
	\mathcal{Y}_k=\{ y : \min_{ y \in\mathbb{R}^{k}}\|AW_k y -b\|_2\} .\]
	By \cite[Theorem 2.1]{Elden1982}, it has a unique solution $ y _k$ iff $\mathcal{N}( C_{rkhs}^{1/2}W_k)\bigcap  \mathcal{N}(AW_k)=\{\boldsymbol{0}\}$. 
	
    Now we prove $\mathcal{N}( C_{rkhs}^{1/2}W_k)=\{\boldsymbol{0}\}$. To this end, suppose $ y \in\mathcal{N}( C_{rkhs}^{1/2}W_k)$ and $\vecphi=W_k y $. Then $\vecphi\in\mathcal{S}_k\bigcap \mathcal{N}( C_{rkhs}^{1/2})$. Since $\mathcal{N}( C_{rkhs}^{1/2})=\mathcal{N}( C_{rkhs})=\mathcal{R}( C_{rkhs})^{\perp}$, 
    we get $\vecphi\in\mathcal{S}_k\bigcap\mathcal{R}( C_{rkhs})^{\perp}\subseteq \mathcal{R}( C_{rkhs})\bigcap \mathcal{R}( C_{rkhs})^{\perp}=\{\boldsymbol{0}\}$. Therefore, $\vecphi=W_k y =\boldsymbol{0}$, which leads to $ y =\boldsymbol{0}$.

To prove the uniqueness of a solution to $ \min_{\vecphi\in\mathcal{S}_k}\|A\vecphi-\obs\|_{2}$, suppose that there are two minimizers, $x_1 \neq x_2 \in \mathcal{S}_k$, and we prove that they must be the same. Let $x_*=x_1-x_2 $ and we have $A^\top A x_* =0 $ since the minimizer of $\calE(x) = \|A\vecphi-\obs\|_{2} $ must satisfy $0 = \frac{1}{2}\nabla \calE(x) = A^\top A x- A^\top b$. That is, $x_*\in \mathcal{N}(A^\top A)$. 

On the other hand, since $ x_*\in \mathcal{S}_k \subset \mathcal{R}(C_{rkhs})$ by \Cref{prop:z_in_RKHS}, and note that $C_{rkhs}=B(A^{\top}A)^{\dag}B=BV_{r}\Lambda_{r}^{-1}V_{r}^{\top}B$, we have $B^{-1}x_*\in \mathcal{R}((A^{\top}A)^{\dag}B) \subset \mathcal{N}(A^\top A)^\perp$.  

Combining the above two, we have $\innerp{x_*,B^{-1}x_*}=0$. But $B$ is a symmetric positive definite matrix, so we must have $x_*=0$.  
\end{proof}

\section{Numerical Examples}\label{sec:num}

\subsection{The Fredholm equation of the first kind}\label{sec:FIE_num}
We first examine the iDARR in solving the discrete Fredholm integral equation of the first kind.  The tests cover two distinct types of spectral decay: exponential and polynomial.  The latter is well-known to be challenging, often occurring in applications such as image deblurring. Additionally, we investigate two scenarios concerning whether the true solution is inside or outside the function space of identifiability (FSOI).


\noindent\textbf{Three norms in iterative and direct methods.} We compare the $l^2$, $L^2$, and DA-RKHS norms in iterative and direct methods. The direct methods are based on matrix decomposition. These regularizers are listed in \Cref{tab:regularizers}.  
\begin{table}[ht] 
	\begin{center} 
		\caption{ Three regularization norms in iterative and direct methods.   
		 } \label{tab:regularizers}
		\begin{tabular}{ c   c | c c }		\toprule 
                Norms            & $\|x\|_*^2$ &  Iterative  & Direct  \\  \midrule
	         $l^2$   &   $x^\top I_n x $  & IR-l2   &l2 \\ 
	         $L^2$    &   $x^\top Bx $  &  IR-L2 & L2\\   
	       DA-RKHS     &  $ x^\top C_{rkhs} x$  & iDARR   & DARTR \\
	 			\bottomrule	  
		\end{tabular}  \vspace{-4mm}
	\end{center}
\end{table} 

The iterative methods differ primarily in their regularization norms. For the with $l^2$ norm, we use the LSQR method in the \textsf{IR TOOLS} package \cite{gazzola2019ir}, and we stop the iteration when $\alpha_i$ or $\beta_i$ becomes negligible to maintain stability. For the $L^2$ norm, we use gGKB to construct solution subspaces by replacing $C_{rkhs}$ by the basis matrix $B$; note that this method is equivalent to the LSQR method using $L=\sqrt{B}$ as a preconditioner in the \textsf{IR TOOLS} package. 

The direct methods are Tikhonov regularizers using the L-curve method \cite{hansen_LcurveIts_a}.

\noindent\textbf{Numerical settings.}  We consider the problem of recovering the input signal $\phi$ in a discretization of Fredholm integral equation in \cref{eq:FIE} with $s\in [a,b]$ and $t\in [c,d]$. 
The data are discrete noisy observations $b=(y(t_1),\ldots, y(t_m))\in \R^m$, where $ t_j =c+ j(d-c)/m$ for $0\leq j\leq m$.  The task is to estimate the coefficient vector $x=  (\phi(s_1), \ldots,\phi(s_n)) \in \R^n$ in a piecewise-constant function $\phi(s)=\sum_{i=1}^n \phi(s_i) \mathbf{1}_{[s_{i-1},s_i]}(s)$, where $\calS:= \{s_i\}_{i=1}^n\subset [a,b]$ with $s_i= a+i\delta $, $ \delta=(b-a)/n$. 
 We obtain the linear system \cref{eq:Ab} with $A(j,i) = K(t_j,s_i)\delta$ by a Riemann sum approximation of the integral.  We set $(a,b,c,d)=(1,5,0,5)$,  $m=500$, and take $n =100$ except when testing the computational time with a sequence of large values for $n$.

The $\R^m$-valued noise $\bw$ is Gaussian $N(0,\sigma^2\Delta t I_m)$. We set the standard deviation of the noise to be $\sigma= \|Ax  \|  \times nsr$, where $nsr$ is the noise-to-signal ratio, and we test our methods with $nsr = \{0.0625,0.125,0.25,0.5,1\}$.

We consider two integral kernels 
\begin{equation}\label{eq:kernels}
\text{(a) }\, K(t,s):= s^{-2}e^{- s t}; \quad \text{(b) }\, K(t,s):= s^{-1} |\sin(st +1)|. 
\end{equation}
which lead to exponential and polynomial decaying spectra, respectively, as shown in Figure \ref{fig:kernel_spectrum}. 
The first kernel arises from magnetic resonance relaxometry \cite{Bi2022span-of-regular} with $\phi$ being the distribution of transverse nuclear relaxation times.

\begin{figure}[!htb] \centering
\subfigure[Exponential decaying spectrum]{ \includegraphics[width=0.36\textwidth]{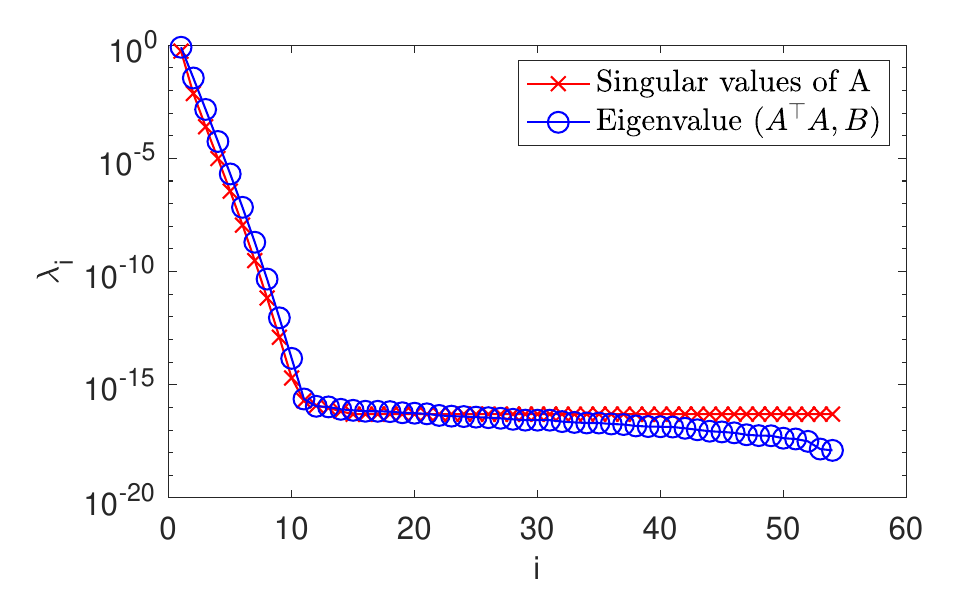} }
\subfigure[Polynomial decaying spectrum]{\includegraphics[width=0.355\textwidth]{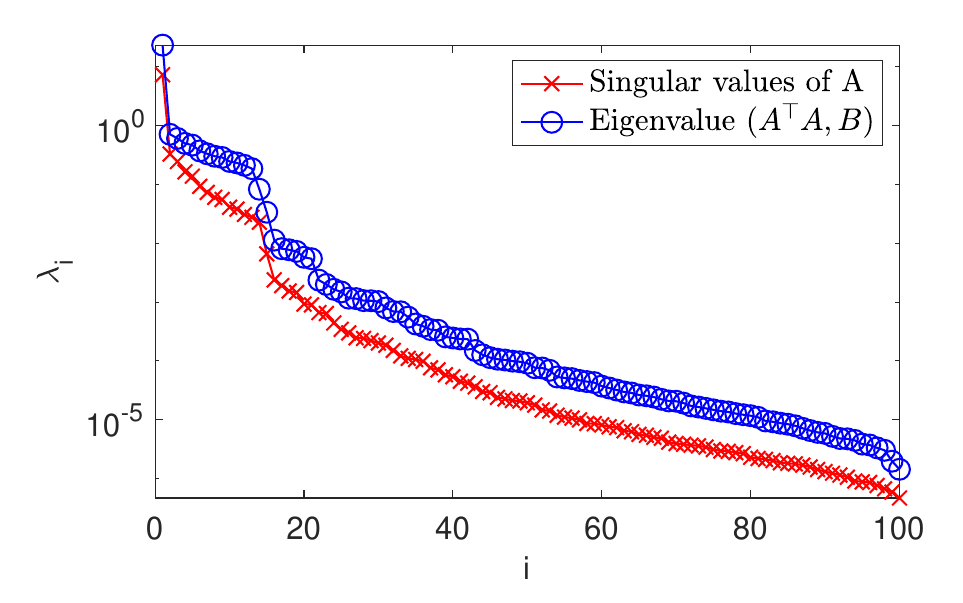} }\vspace{-3mm}
\caption{Singular values of $A$ and generalized eigenvalues of $(A^\top A, B)$ for kernels in \cref{eq:kernels}. 
}
\label{fig:kernel_spectrum}
\end{figure}

\noindent\textbf{The DA-RKHS.} By its definition in \cref{eq:exp_measure}, the exploration measure is  $ \rho(s_i)=\frac{1}{\gamma} \sum_{j=1}^m |K(t_j,s_i)|\delta$ for  $i=1,\dots,n$ with $\gamma$ being the normalizing constant. In other words, it is the normalized column sum of the absolute values of the matrix $A$. The discrete function space $L^2_\rho(\calS)$ is equivalent to $\R^n$ with weight $\rho=(\rho(s_1),\ldots, \rho(s_n))$, and its norm is $\innerp{x,x}_{L^2_\rho}:=x^\top \mathrm{diag}(\rho(s_i))x$ for all $x \in \R^n$.
The basis matrix for the Cartesian basis of $\R^n$ is 
 $B = \mathrm{diag}(\rho(s_i))$, 
 which is also the basis matrix for step functions in the Riemann sum discretization. The DA-RKHS in this discrete setting is $(\mathcal{N}(A^\top A) )^\perp, \innerp{\cdot,\cdot}_{C_{rkhs}})$.

\noindent\textbf{Settings for comparisons.} The comparison consists of two scenarios regarding whether the true solution is inside or outside of the FSOI: (i) the true solution is the second eigenvector of $\LGbar$; thus it is inside the FSOI; and (ii) the true solution is $\phi(x) = x^2$, which has significant components outside the FSOI. 

For each scenario, we conduct 100 independent simulations, with each simulation comprising five datasets at varying noise levels. The results are presented by a box plot, which illustrates the median, the lower and upper quartiles, any outliers, and the range of values excluding outliers. The key indicator of a regularizer's effectiveness is its ability to produce accurate estimators whose errors decay consistently as the noise level decreases. Since exploring the decay rate in the small noise limit is not the focus of this study, we direct readers to \cite{LuOu23,LangLu23sna} for initial insights into how this rate is influenced by factors such as spectral decay, the smoothness of the true solution, and the choice of regularization strength.

\begin{figure}[htb]
\centering

\includegraphics[width=0.98\textwidth]{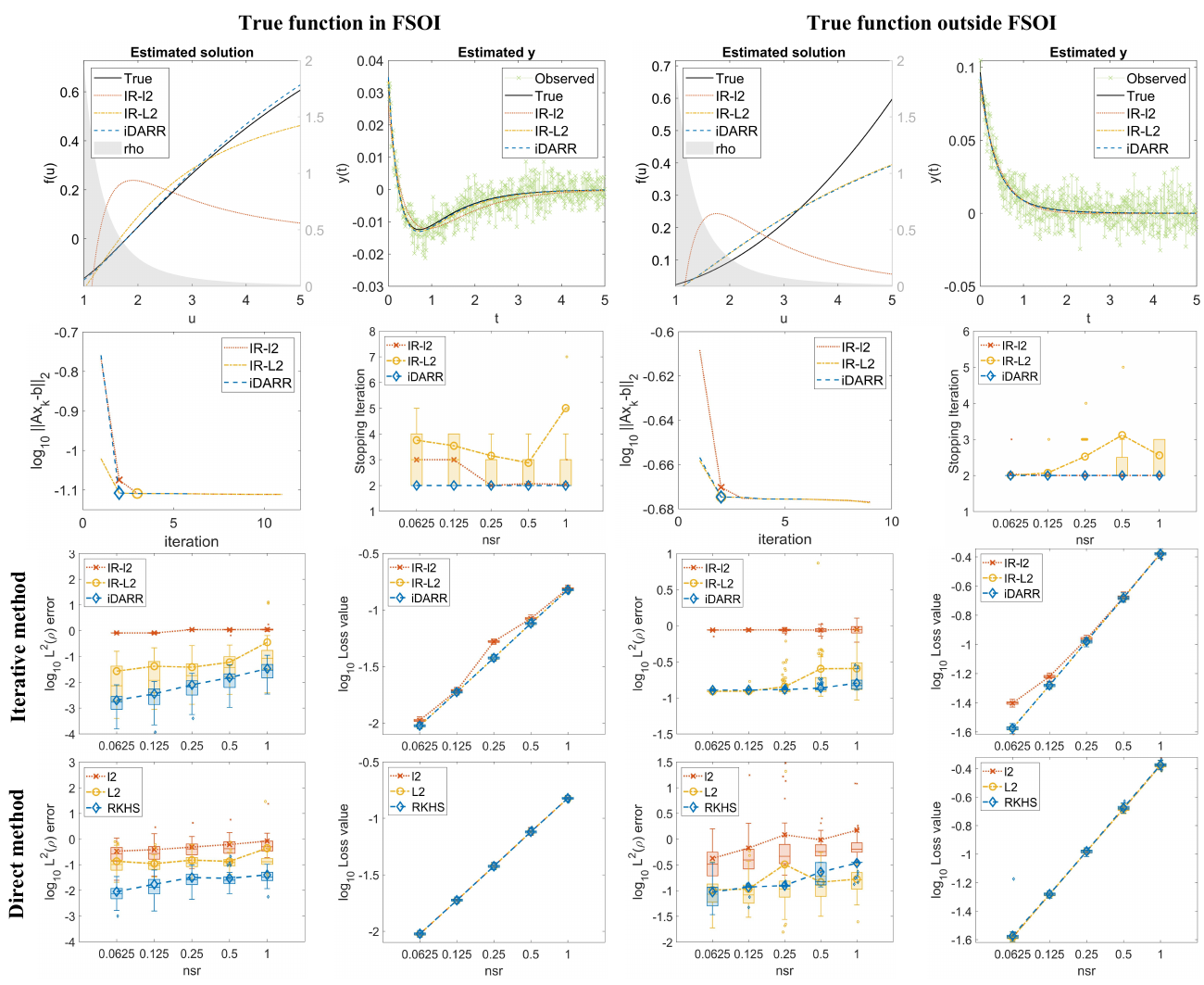}\vspace{-3mm}
\caption{ Results in the case of exponentially decaying spectrum. 
Top-row: typical estimators of IR-l2, IR-L2, and iDARR when $nsr=0.5$ and their denoising of the output signal. 
The 2nd-top row: the residual $\|Ax_k-b\|_2$ as iteration number $k$ increases in one realization when $nsr=0.5$, as well as the boxplots of the stopping iteration numbers in the 100 simulations. 
The lower two rows: boxplots of the estimators' $L^2_\rho$ errors and loss function values in the 100 simulations. 
}
\label{fig:estimator_exp}\vspace{-3mm}
\end{figure}

\noindent\textbf{Results.}  We report the results separately according to the spectral decay. 
 
\textbf{(i) Exponential decaying spectrum.} 
\Cref{fig:estimator_exp}'s top row shows typical estimators of IR-l2, IR-L2, and iDARR and their de-noising of the output signal when $nsr=0.5$. When the true solution is inside the FSOI, the iDARR significantly outperforms the other two in producing a more accurate estimator. However, both IR-L2 and IR-l2 can denoise the data accurately, even though their estimators are largely biased. When the true solution is outside the FSOI, all the regularizers can not capture the true function accurately, but iDARR and IR-L2 clearly outperform the $l_2$ regularizer. Yet again, all these largely biased estimators can de-noise the data accurately. Thus, this inverse problem is severely ill-defined, and one must restrict the inverse to be in the FSOI.

The 2nd top row of \Cref{fig:estimator_exp} shows the decay of the residual $\|Ax_k-y\|_2$ as the iteration number increases, as well as the stopping iteration numbers of these regularizers in 100 simulations. The fast decaying residual suggests the need for early stopping, and all three regularizers indeed stop in a few steps. Notably, iDARR consistently stops early at the second iteration for different noise levels, outperforming the other two regularizers in stably detecting the stopping iteration. 

The effectiveness of the DA-RKHS regularization becomes particularly evident in the lower two rows of \Cref{fig:estimator_exp}, which depict the decaying errors and loss values as the noise-to-signal ratio ($nsr$) decreases in the 100 independent simulations. In both iterative and direct methods, the DA-RKHS norm demonstrates superior performance compared to the $l^2$ and $L^2$ norms, consistently delivering more accurate estimators that show a steady decrease in error alongside the noise level. Notably, the values of the corresponding loss functions are similar, underscoring the inherent ill-posedness of the inverse problem. Furthermore, iDARR marginally surpasses the direct method DARTR in producing more precise estimators, particularly when the true solution resides within the FSOI. The performance of iDARR suggests that its early stopping mechanism can reliably determine an optimal regularization level, achieving results that are slightly more refined than those obtained with DARTR using the L-curve method.

\begin{figure}[!htb]
\centering

\includegraphics[width=0.98\textwidth]{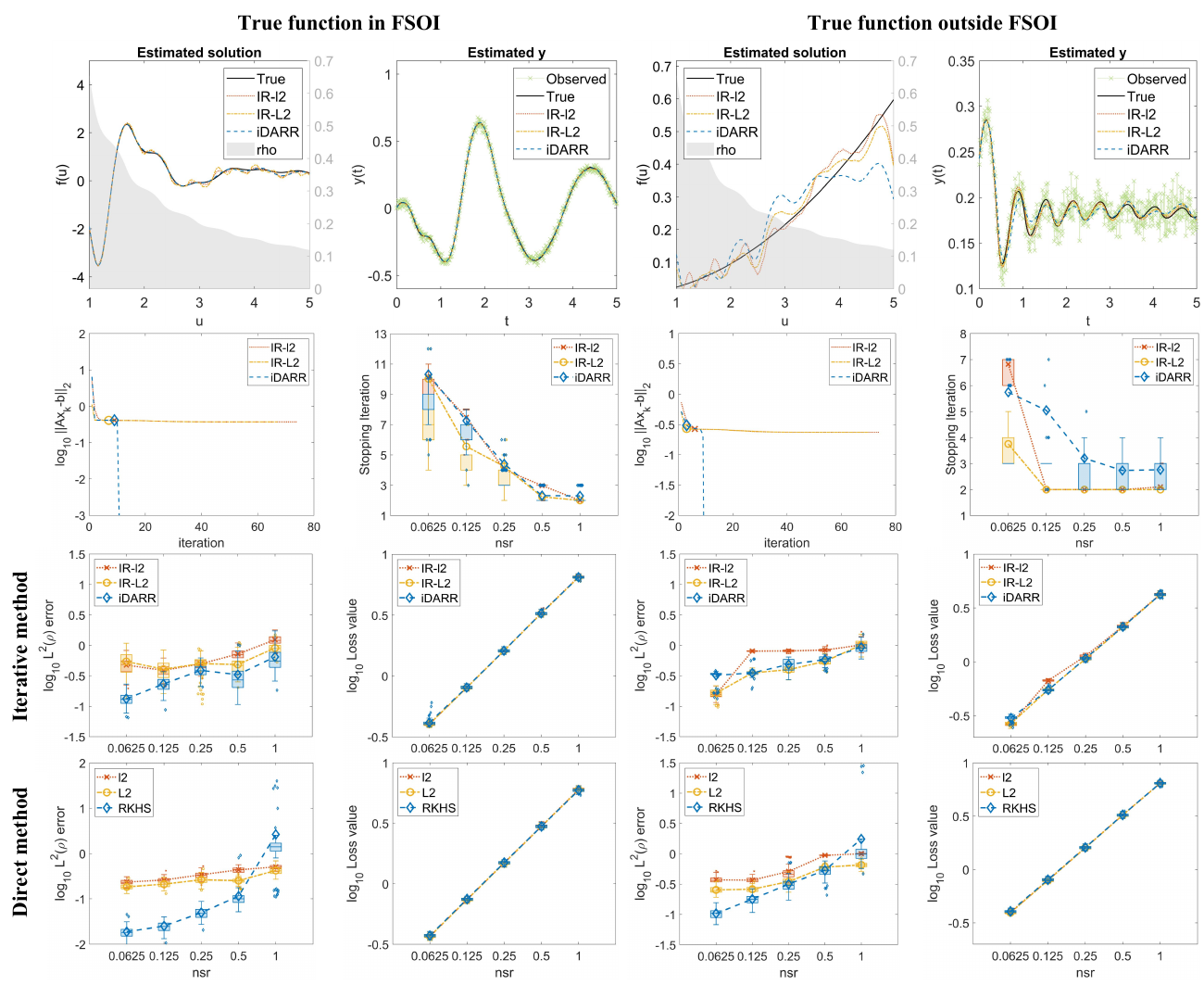}\vspace{-3mm}
\caption{Results in the case of polynomial decaying spectrum. 
 Top-row: typical estimators of IR-l2, IR-L2, and iDARR when $nsr=0.0625$ and their denoising of the output signal. 
 The 2nd-top row: the residual $\|Ax_k-b\|_2$ as iteration number $k$ increases in one realization when $nsr=0.0625$, as well as the box plots of the stopping iteration numbers the 100 simulations. 
 The lower two rows: box plots of the estimators' $L^2_\rho$ errors and loss function values in the 100 simulations. 
 }
\label{fig:estimator_poly}\vspace{-3mm}
\end{figure}
 \textbf{(ii) Polynomial decaying spectrum.} 
\Cref{fig:estimator_poly} illustrates again the superior performance of iDARR over IR-L2 and IR-l2 in the case of polynomial spectral decay. 
The 2nd top row shows that the slow spectral decay poses a notable challenge to the iterative methods, as the noise level affects their stopping iteration numbers. Also, they all stop early within approximately twelve steps, even though the true solution may lay in a subspace with a higher dimension. 
 
 The lower two rows show that iDARR remains effective. It continues to outperform the other two iterative regularizers when the true solution is in the FSOI, and it is marginally surpassed by IR-L2 and IR-l2 when the true solution is outside the FSOI. In both scenarios, the direct method DARTR outperforms all other methods, including iDARR, indicating DARTR is more effective in extracting information from the spectrum with slow decay.

\noindent\textbf{Computational Complexity.}
The iterative method iDARR is orders of magnitude faster than the direct method DARTR, especially when $n$ is large. \Cref{fig:time} shows their computation time as $n$ increases in $10$ independent simulations, and the results align with the complexity order illustrated in \Cref{subsec:complexity}.  


 \begin{figure}[!htb]
\centering
\includegraphics[width=0.4\textwidth]{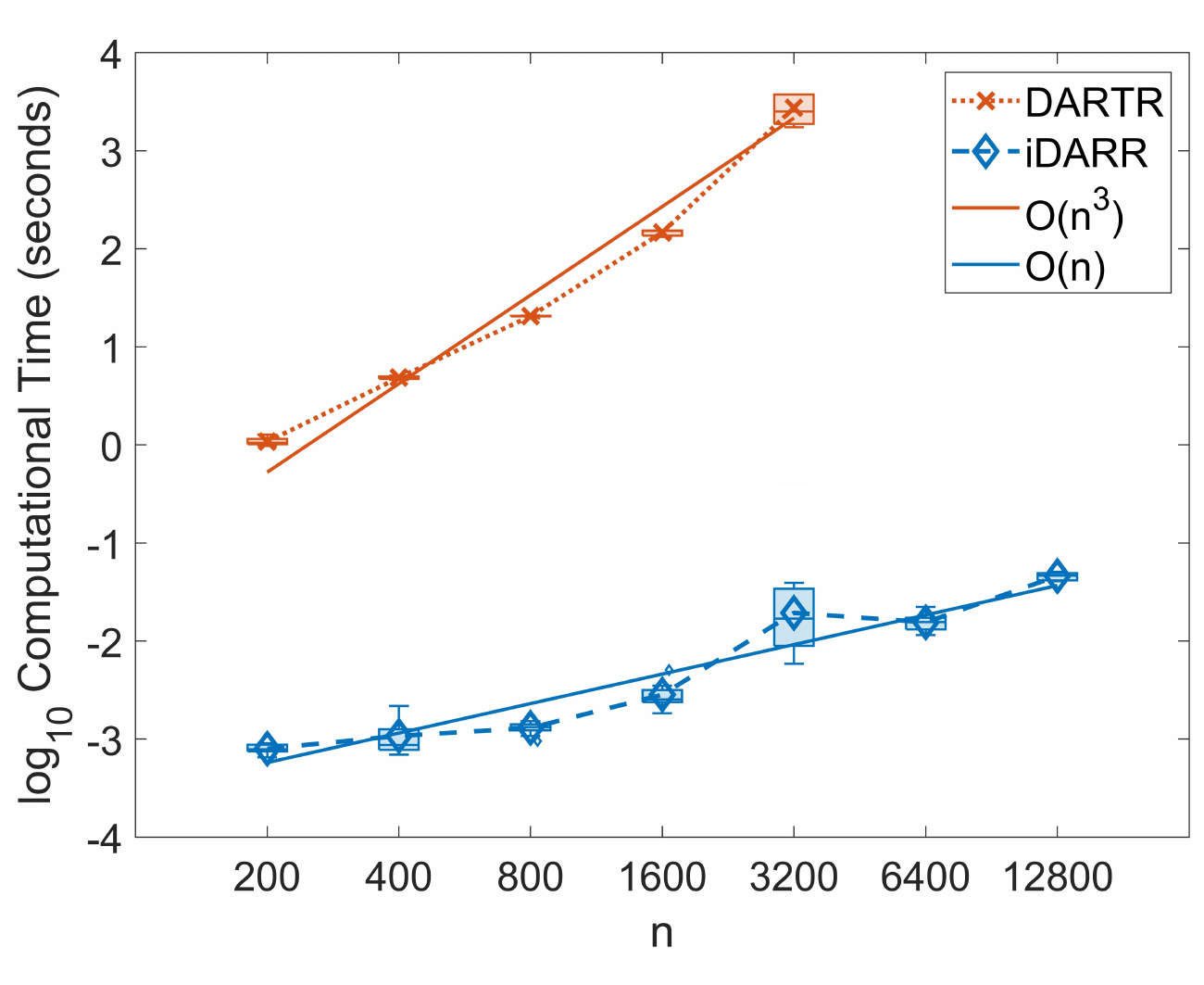} \vspace{-4mm}
\caption{Computational time in 10 simulations with $m=500$. 
}
\label{fig:time} \vspace{-4mm}
\end{figure}

In summary, iDARR outperforms IR-L2 and IR-l2 in yielding accurate estimators that consistently decay with the noise level. Its major advantage comes from the DA-RKHS norm that adaptively exploits the information in data and the model.

\subsection{Image Deblurring}
We further test iDARR in 2D image deblurring problems, where the task is to reconstruct images from blurred and noisy observations. The mathematical model of this problem can be expressed in the form of the first-kind Fredholm integral equation in \cref{eq:FIE} with $s,t \in \mathbb{R}^2$. 
The kernel $K(t,s)$ is a function that specifies how the points in the image are distorted, called the point spread function (PSF). We chose {\sf PRblurspeckle} from \cite{gazzola2019ir} as the blurring operator, which simulates spatially invariant blurring caused by atmospheric turbulence, and we use zero boundary conditions to construct the matrix $A$. For a true image with $N\times N$ pixels, the matrix $A\in\mathbb{R}^{N^2\times N^2}$ is a \texttt{psfMatrix} object. We consider two images with $256\times 256$ and $320\times 320$ pixels, respectively, and set the noise level to be $nsr=0.01$ for both images. The true images, their blurred noisy observations, and corresponding PSFs that define matrices $A$ are presented in \Cref{fig:image}.

\begin{figure}[htb]
\centering
\subfigure[True Image-1]{
\includegraphics[width=0.2\textwidth]{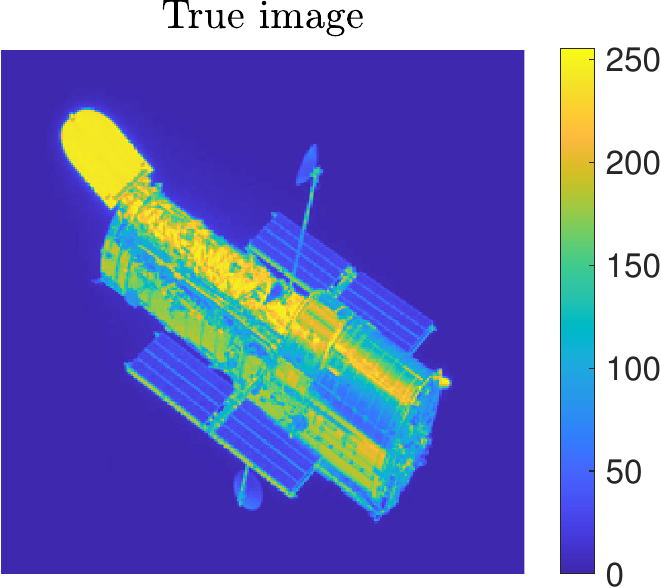}}\hspace{2mm}
\subfigure[Blurred Image-1]{
\includegraphics[width=0.2\textwidth]{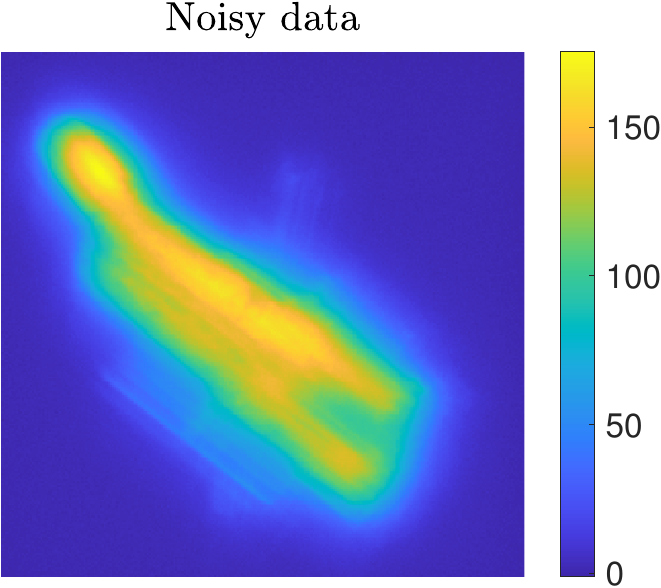}}\hspace{2mm}
\subfigure[PSF for $N=256$]{
\includegraphics[width=0.2\textwidth]{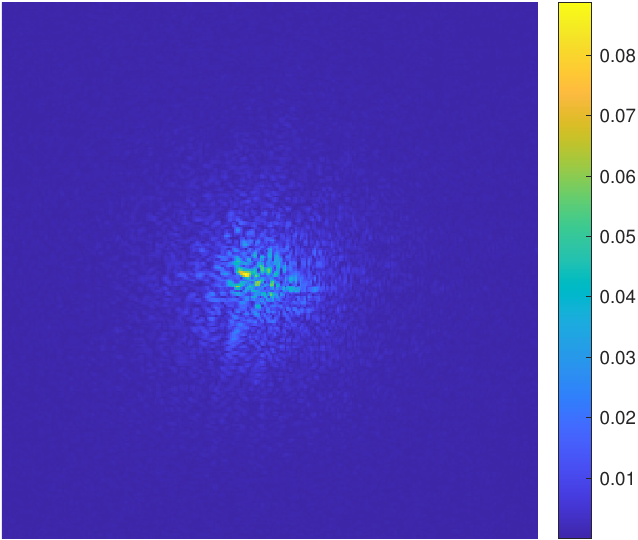}} \\
\vspace{-2mm}
\subfigure[True Image-2]{
\includegraphics[width=0.2\textwidth]{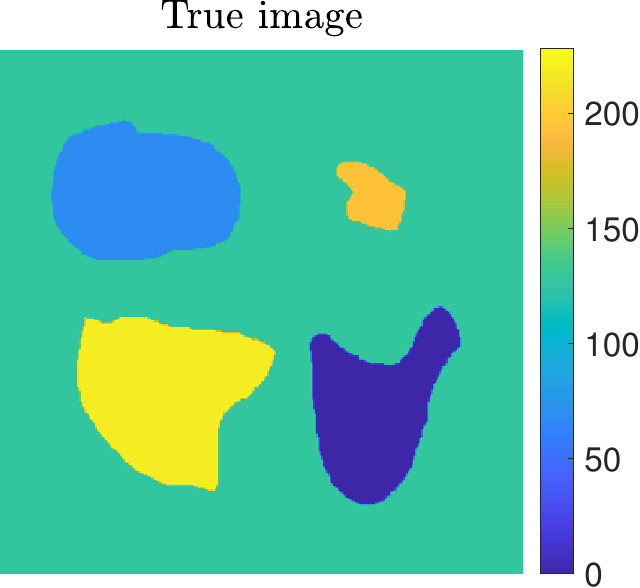}}\hspace{2mm}
\subfigure[Blurred Image-2]{
\includegraphics[width=0.2\textwidth]{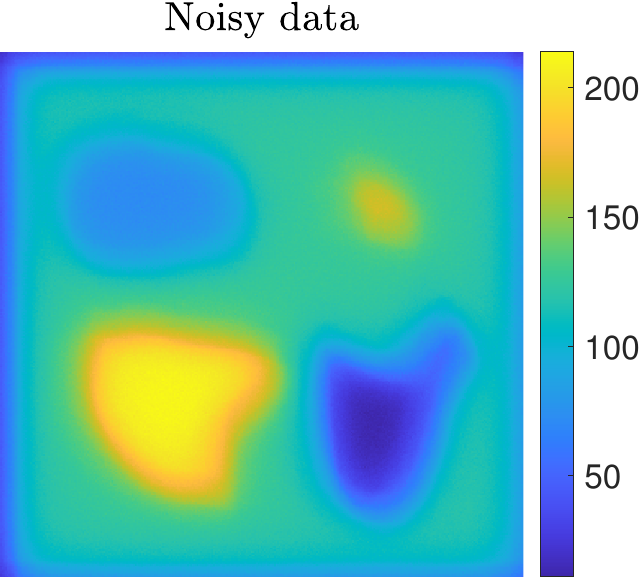}}\hspace{2mm}
\subfigure[PSF for $N=320$]{
\includegraphics[width=0.2\textwidth]{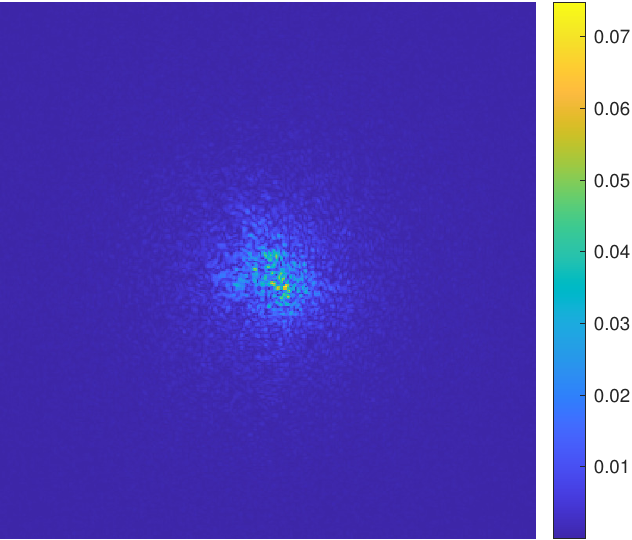}}
\vspace{-4mm}
\caption{The true images, noisy images blurred by {\sf PRblurspeckle}, and the corresponding PSFs.} 
\label{fig:image}
\end{figure}


\ifjournal 
\begin{figure}[htb]
\centering
\subfigure{
\includegraphics[width=0.22\textwidth]{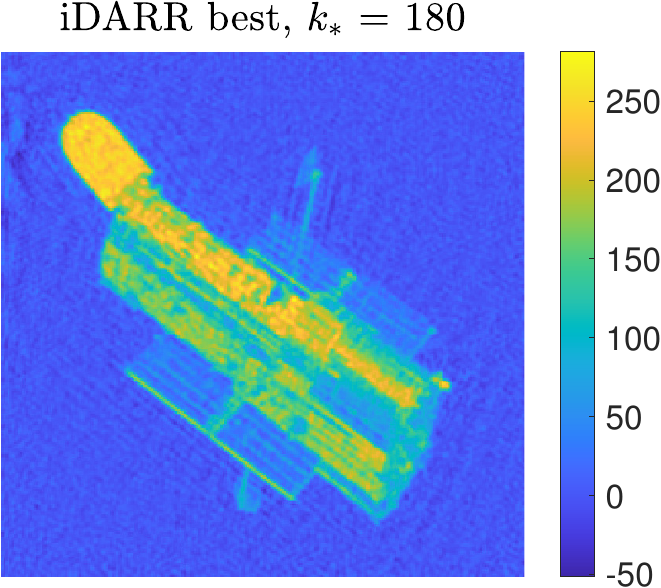}}\hspace{-7.5mm}
\subfigure{
\includegraphics[width=0.22\textwidth]{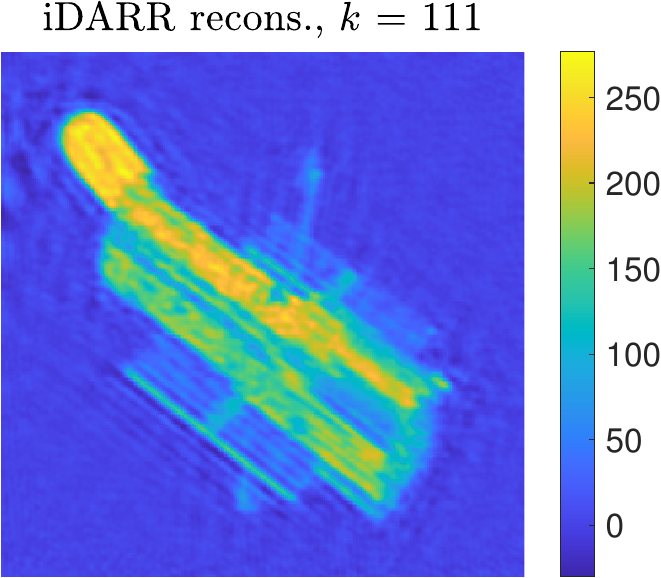}}\hspace{-7.5mm}
\subfigure{
\includegraphics[width=0.22\textwidth]{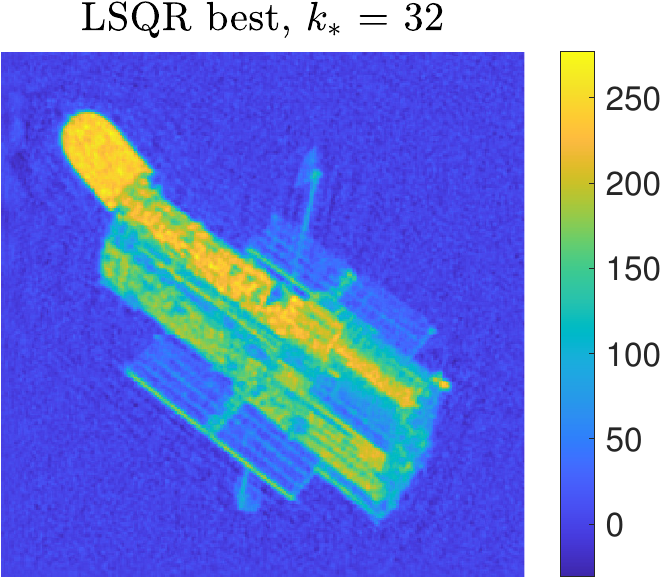}}\hspace{-7.5mm}
\subfigure{
\includegraphics[width=0.225\textwidth]{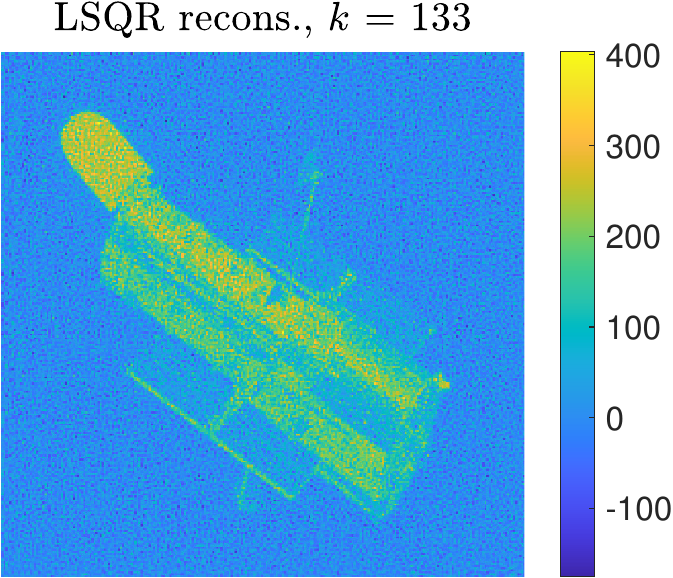}}\hspace{-7.8mm}
\subfigure{
\includegraphics[width=0.22\textwidth]{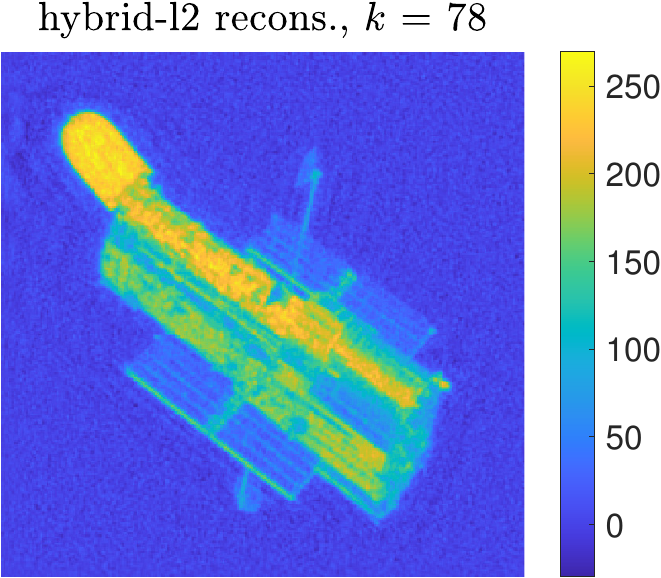}} \\
\vspace{-2mm}
\subfigure{
\includegraphics[width=0.22\textwidth]{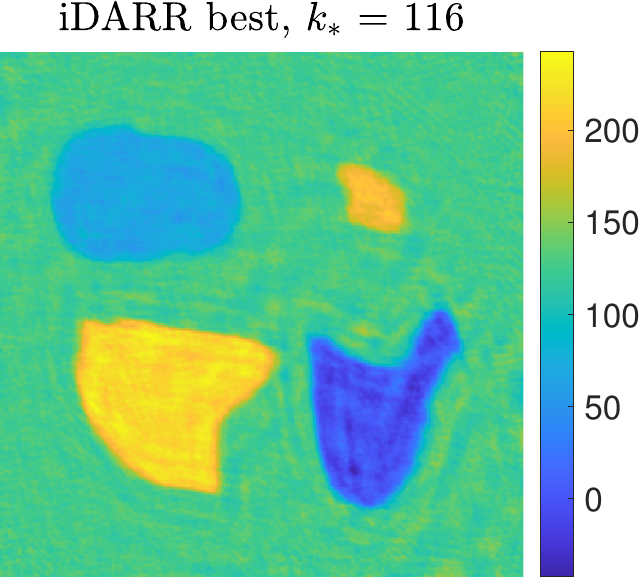}}\hspace{-7mm}
\subfigure{
\includegraphics[width=0.22\textwidth]{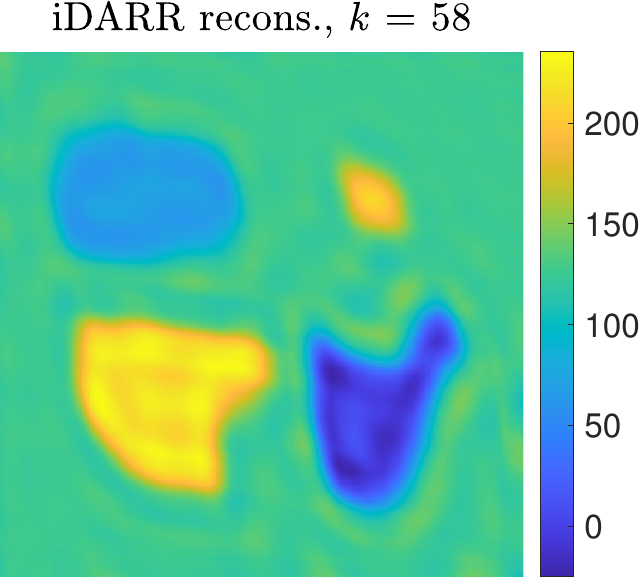}}\hspace{-7mm}
\subfigure{
\includegraphics[width=0.22\textwidth]{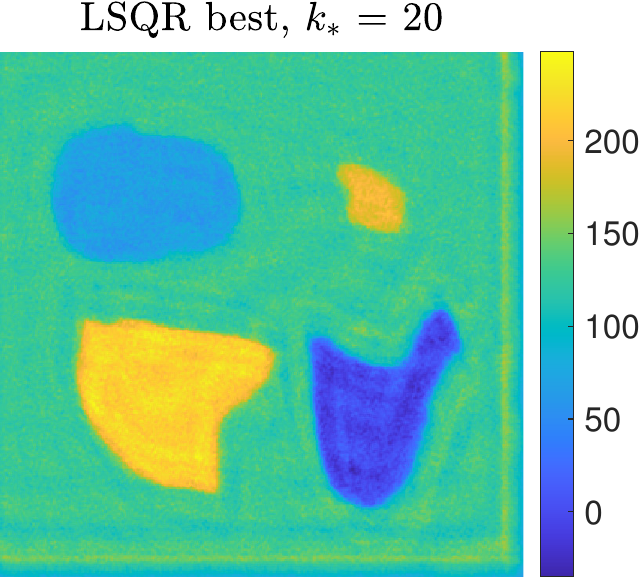}}\hspace{-7mm}
\subfigure{
\includegraphics[width=0.225\textwidth]{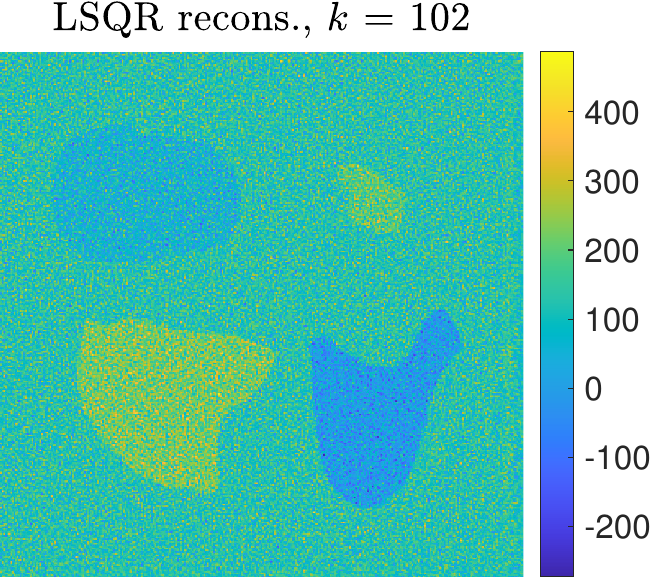}}\hspace{-7.4mm}
\subfigure{
\includegraphics[width=0.22\textwidth]{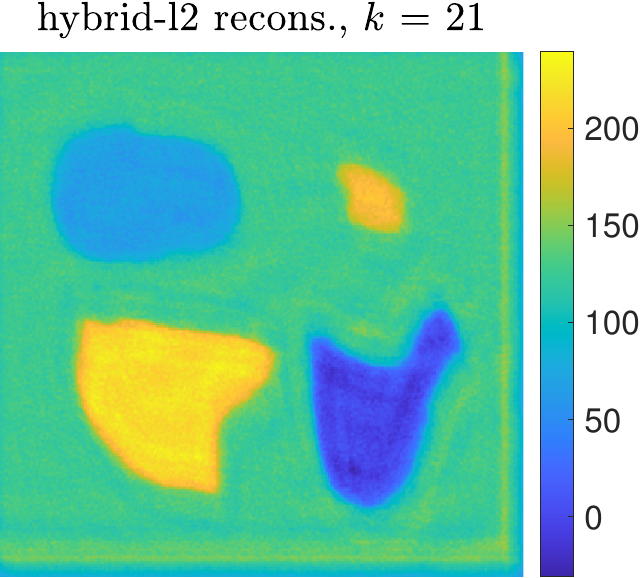}}
\vspace{-3mm}
\caption{The reconstructed images computed by iDARR, LSQR and hybrid-l2 methods.} 
\label{fig:deblur}
\end{figure}
\fi 

\ifarXiv

\begin{figure}[htb]
\centering
\subfigure{
\includegraphics[width=0.22\textwidth]{deblur/iDAR_deblurbest1-eps-converted-to.pdf}}\hspace{-9.2mm}
\subfigure{
\includegraphics[width=0.22\textwidth]{deblur/iDAR_deblurLC1-eps-converted-to.pdf}}\hspace{-9.2mm}
\subfigure{
\includegraphics[width=0.22\textwidth]{deblur/LSQR_deblurbest1-eps-converted-to.pdf}}\hspace{-9.2mm}
\subfigure{
\includegraphics[width=0.22\textwidth]{deblur/LSQR_deblurLC1-eps-converted-to.pdf}}\hspace{-9.4mm}
\subfigure{
\includegraphics[width=0.22\textwidth]{deblur/hybrid_deblur1-eps-converted-to.pdf}} \\
\vspace{-2mm}
\subfigure{
\includegraphics[width=0.22\textwidth]{deblur/iDAR_deblurbest2-eps-converted-to.pdf}}\hspace{-8.7mm}
\subfigure{
\includegraphics[width=0.22\textwidth]{deblur/iDAR_deblurLC2-eps-converted-to.pdf}}\hspace{-8.7mm}
\subfigure{
\includegraphics[width=0.22\textwidth]{deblur/LSQR_deblurbest2-eps-converted-to.pdf}}\hspace{-8.7mm}
\subfigure{
\includegraphics[width=0.22\textwidth]{deblur/LSQR_deblurLC2-eps-converted-to.pdf}}\hspace{-9.2mm}
\subfigure{
\includegraphics[width=0.22\textwidth]{deblur/hybrid_deblur2-eps-converted-to.pdf}}
\vspace{-3mm}
\caption{The reconstructed images computed by iDARR, LSQR and hybrid-l2 methods.} 
\label{fig:deblur}
\end{figure}
\fi

\begin{figure}[!htb] \vspace{-3mm}
\centering
\subfigure[Relative error, Image-1]{
\includegraphics[width=0.3\textwidth]{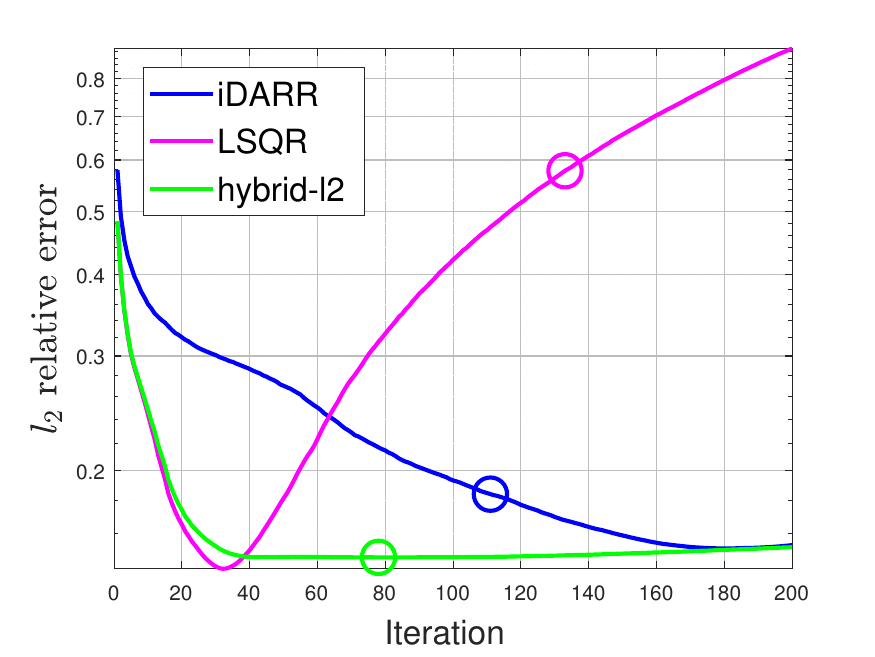}}\hspace{-4mm}
\subfigure[iDARR, Image-1]{
\includegraphics[width=0.3\textwidth]{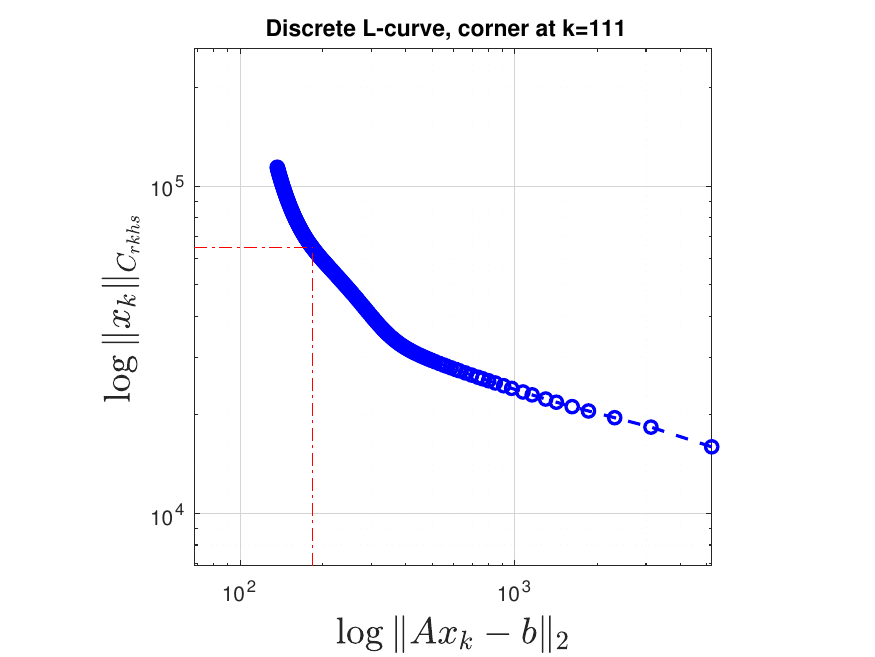}}\hspace{-5mm}
\subfigure[LSQR, Image-1]{
\includegraphics[width=0.3\textwidth]{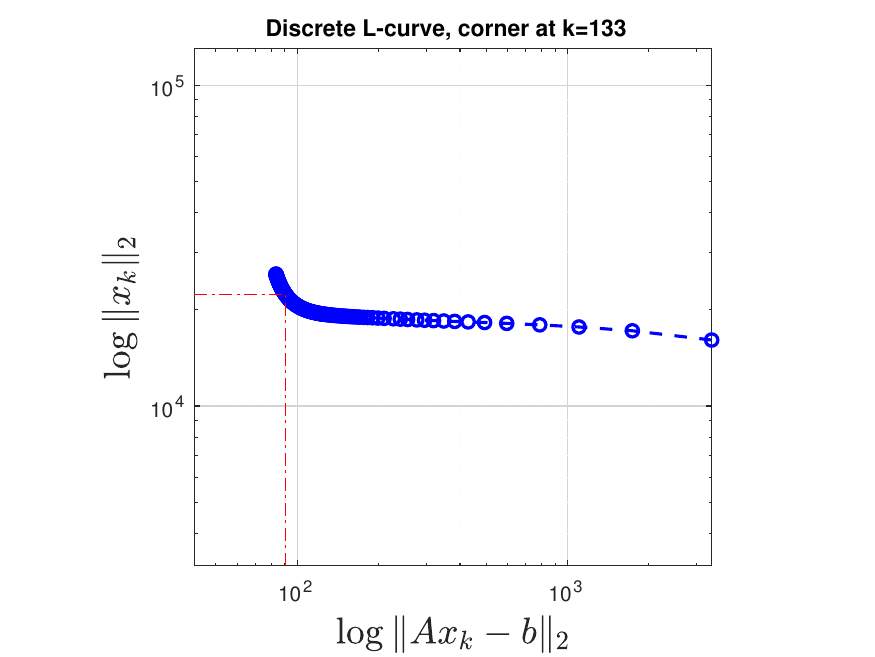}} \\
\vspace{-3mm}
\subfigure[Relative error, Image-2]{
\includegraphics[width=0.3\textwidth]{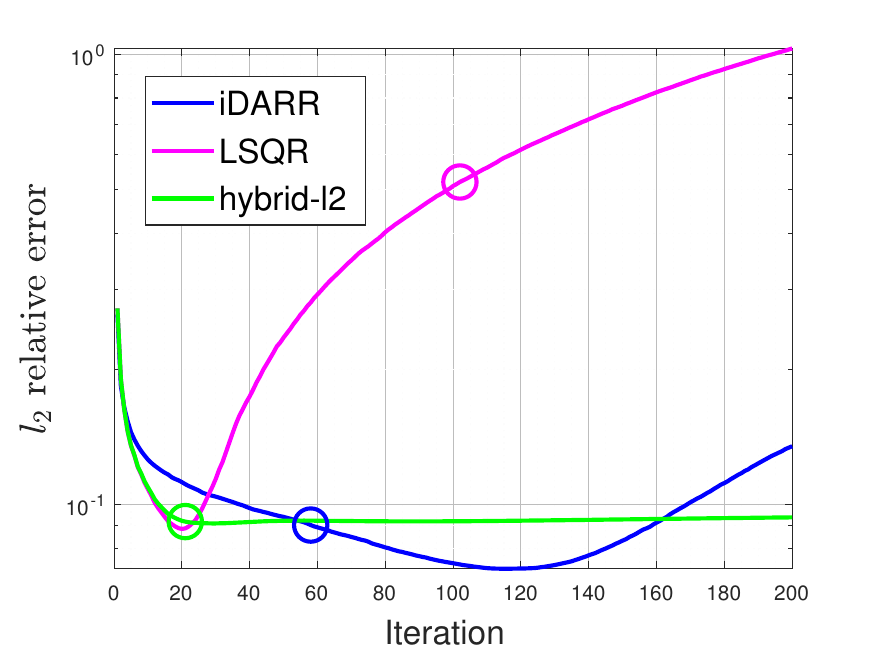}}\hspace{-4mm}
\subfigure[iDARR, Image-2]{
\includegraphics[width=0.3\textwidth]{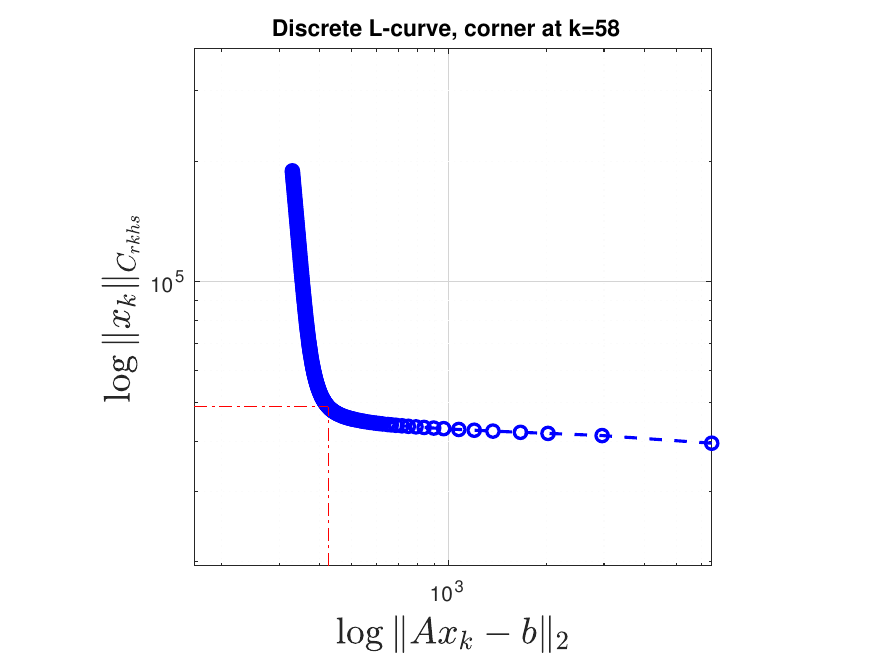}}\hspace{-5mm}
\subfigure[LSQR, Image-2]{
\includegraphics[width=0.3\textwidth]{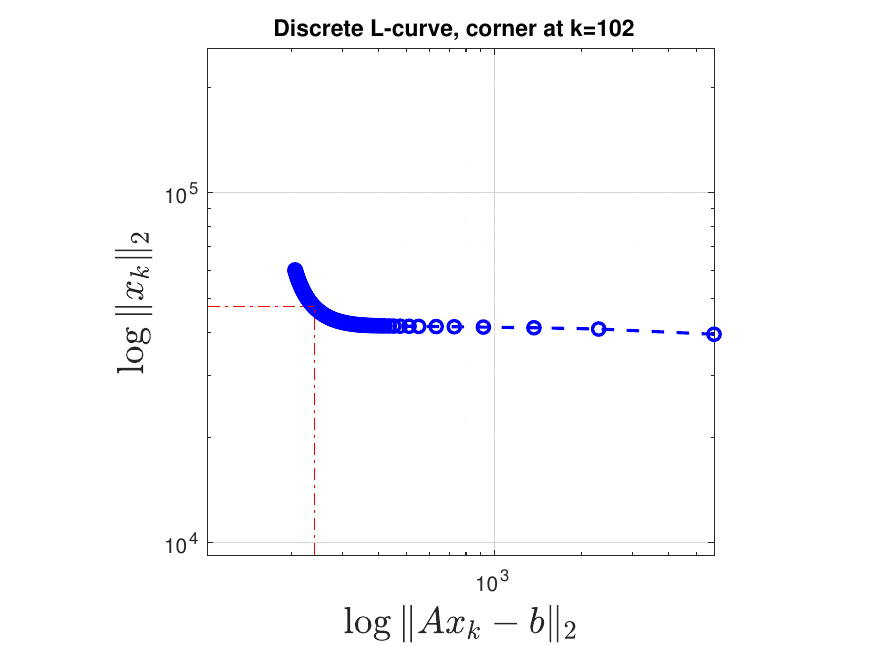}}
\vspace{-3mm}
\caption{Relative errors in iteration numbers, where the circles mark the early stopping iterations chosen by the L-curve method presented in the right two columns. 
}
\vspace{-4mm}
\label{fig:rel_error}
\end{figure}

\Cref{fig:deblur} shows the reconstructed images computed by iDARR, LSQR, and the hybrid-l2 methods. Here the hybrid-l2 applies an $l_2$-norm Tikhonov regularization to the projected problem obtained by LSQR, and it uses the stopping strategy in \cite{gazzola2019ir}. The best estimations of for iDARR or LSQR are solutions with $k_*$ minimizing $\|x_{k}-x_{\text{true}}\|_2$. Their reconstructed solutions are obtained by using the L-curve method for early stopping.  

\Cref{fig:rel_error} (a)--(f) show the relative errors as the iteration number increases and the selection of the early stopping iterations by the L-curve method in the right two columns.

Notably, \Cref{fig:rel_error} reveals that iDARR achieves more accurate reconstructed images than LSQR for both tests, despite appearing to the contrary in Figure {fig:deblur}. The LSQR appears prone to stopping late, resulting in lower-quality reconstructions than iDARR. In contrast, iDARR tends to stop earlier than ideal, before achieving the best quality. However, the hybrid-l2 method consistently produces accurate estimators with stable convergence, suggesting potential benefits in developing a hybrid iDARR approach to enhance stability. 

The effectiveness of iDARR depends on the alignment of regularities between the convolution kernel and the image, as the DA-RKHS's regularity is tied to the smoothness of the convolution kernel. With the {\sf PRblurspeckle} featuring a smooth PSF, iDARR obtains a higher accuracy for the smoother Image-2 compared to Image-1, producing reconstructions with smooth edges. An avenue for future exploration involves adjusting the DA-RKHS's smoothness to better align with the smoothness of the data.


\section{Conclusion and Future Work}\label{sec:conlusion}
We have introduced iDARR, a scalable iterative data-adaptive RKHS regularization method, for solving ill-posed linear inverse problems. It searches for solutions in the subspaces where the true signal can be identified and achieves reliable early stopping via the DA-RKHS norm. A core innovation is a generalized Golub-Kahan bidiagonalization procedure that recursively computes orthonormal bases for a sequence of RKHS-restricted Krylov subspaces. Systematic numerical tests on the Fredholm integral equation show that iDARR outperforms the widely used iterative regularizations using the $L^2$ and $l^2$ norms, in the sense that it produces stable accurate solutions consistently converging when the noise level decays. Applications to 2D image de-blurring further show the iDARR outperforms the benchmark of LSQR with the $l^2$ norm.

\paragraph{Future Work: Hybrid Methods}
The accuracy and stability of the regularized solution hinges on the choice of iteration number for early stopping. While the L-curve criterion is a commonly used tool for determining this number, it can sometimes lead to suboptimal results due to its reliance on identifying a corner in the discrete curve. Hybrid methods are well-recognized alternatives that help stabilize this semi-convergence issue, as referenced in \cite{Kilmer2001,Chungnagy2008,Renaut2017}. One promising approach is to apply Tikhonov regularization to each iteration of the projected problem. The hyperparameter for this process can be determined using the \emph{weighted generalized cross-validation method} (WGCV) as described in \cite{Chungnagy2008}. This approach is a focus of our upcoming research project.

 \ifjournal   \bibliographystyle{siamplain} \fi
 \ifarXiv \bibliographystyle{plain} \fi
\bibliography{ref_invLap,ref_regularization23_09,ref_FeiLu2023_10,Iter_Regu}
\end{document}